\documentclass{amsart}

\usepackage{amsmath, amsthm, amssymb, amscd}

\usepackage{enumerate}      
\usepackage{stmaryrd}       
\usepackage{xspace}         
\usepackage{verbatim}       
\usepackage{url}            

\usepackage[usenames,dvipsnames]{color}

\usepackage{tikz}
\usepackage{tikz-cd}
\usetikzlibrary{
  arrows.meta,               
  decorations.pathmorphing,  
  decorations.markings,      
  positioning                
}
\usetikzlibrary{decorations.pathreplacing}
\usetikzlibrary{patterns}
\usepackage[colorlinks=true]{hyperref}
\usepackage[active]{srcltx}
\usepackage[all]{xypic}
\SelectTips{cm}{}

%



{

   \newtheorem{theorem}[subsubsection]{Theorem}
      \newtheorem*{theorem*}{Theorem}
   \newtheorem{proposition}[subsubsection]{Proposition}

   \newtheorem{lemma}[subsubsection]{Lemma}

   \newtheorem{corollary}[subsubsection]{Corollary}
   
   \newtheorem*{conjecture*}{Conjecture}

}
{\theoremstyle{definition}
          \newtheorem*{exercise*}{Exercise}
   
   \newtheorem{example}[subsubsection]{Example}
   \newtheorem*{example*}{Example}
   \newtheorem{definition}[subsubsection]{Definition}
   
   \newtheorem*{definition*}{Definition}
   
   \newtheorem{remark}[subsubsection]{Remark}

}
%
\newcommand{\RR}{{\mathbb{R}}}

\newcommand{\QQ}{{\mathbb{Q}}}
\newcommand{\NN}{{\mathbb{N}}}

\newcommand{\ZZ}{{\mathbb{Z}}}

\newcommand{\GG}{{\mathbb{G}}}

\renewcommand{\AA}{{\mathbb{A}}}


%

%

%

\newcommand{\m}{{\mathfrak{m}}}

\newcommand{\cA}{{\mathcal A}}

\newcommand{\cC}{{\mathcal C}}
\renewcommand{\cD}{{\mathcal D}}

\newcommand{\cI}{{\mathcal I}}
\newcommand{\cJ}{{\mathcal J}}

\newcommand{\cO}{{\mathcal O}}

\renewcommand{\cR}{{\mathcal R}}

\def\<{\langle}
\def\>{\rangle}
\newcommand{\spa}{{\operatorname{span}}}

\newcommand{\Ver}{{\operatorname{Vert}}}
\newcommand{\Int}{{\operatorname{Int}}}
\newcommand{\ext}{{\operatorname{ext}}}
\newcommand{\inn}{\operatorname{in}}

\newcommand{\Spec}{\operatorname{Spec}}

\newcommand{\cProj}{{{\mathcal P}roj}}

\newcommand{\Sing}{{\operatorname{Sing}}}

\newcommand{\codim}{\operatorname{codim}}

\newcommand{\lcm}{{\operatorname{lcm}}}

\newcommand{\Bl}{{\operatorname{Bl_{rs}}}}

\def\:{{\colon}}
\def\.{{,\dots,}}

\newcommand{\double}{\genfrac..{0pt}1
{\raise -1pt\hbox{$\scriptstyle\longrightarrow$}}{\raise 3pt\hbox
{$\scriptstyle\longrightarrow$}}}

\renewcommand{\setminus}{\smallsetminus}







\def\tototi{\mathbin{\mathop{\otimes}\limits^{\raise-1pt\hbox
{$\scriptscriptstyle {\rm L}$}}}}

\def\indlim{\mathop{\vrule width0pt height7pt depth
4pt\smash{\lim\limits_{\raise 1pt\hbox to 14.5pt
{\rightarrowfill}}}}}
\def\projlim{\mathop{\vrule width0pt height7pt depth
4pt\smash{\lim\limits_{\raise 1pt\hbox to 14.5pt
{\leftarrowfill}}}}}

\newcommand\displaceamount{3pt}

\newcommand{\doubledown}{\ar@<\displaceamount>[d]\ar@<-\displaceamount>[d]}

\newcommand{\doubleup}{\ar@<\displaceamount>[u]\ar@<-\displaceamount>[u]}

\newcommand{\doubleright}{\ar@<\displaceamount>[r]\ar@<-\displaceamount>[r]}



\newcommand{\ord}{{\operatorname{ord}}}

\newcommand{\gr}{{\operatorname{gr}}}

\newcommand{\inv}{{\operatorname{inv}}}
\newcommand{\Inv}{{\operatorname{Inv}}}

\newcommand{\inte}{{\operatorname{int}}}
\newcommand{\maxinv}{{\operatorname{maxinv}}}

\def\supp{{\operatorname{supp}}}


\begin{document}

\title{Functorial resolution by torus actions}

%

\author[J. W{\l}odarczyk] {Jaros{\l}aw W{\l}odarczyk}
\address{Department of Mathematics, Purdue University\\
150 N. University Street,\\ West Lafayette, IN 47907-2067}
\email{wlodarcz@purdue.edu}

\thanks{This research is supported by  BSF grant 2014365 and Simons Foundation grant  MPS-TSM-00008103} 

\date{\today}

\begin{abstract}
We present a simple and fast embedded resolution of varieties and principalization of ideals using torus actions on ambient smooth varieties with simple normal crossings (SNC) divisors. The canonical functorial resolution in characteristic zero is achieved via the newly introduced \emph{cobordant blow-ups} along smooth weighted centers. These centers are defined by a geometric invariant measuring the singularities on smooth schemes with SNC divisors.

The output is a smooth variety with a torus action and an SNC exceptional divisor. Its geometric quotient is birational to the resolved variety, has only abelian quotient singularities, and can be desingularized by purely combinatorial methods. The method is rooted in ideas from the joint work with Abramovich and Temkin \cite{ATW-weighted} and is closely related to McQuillan's resolution via stack-theoretic weighted blow-ups \cite{Marzo-McQuillan}.

As an application, we establish resolution results for certain classes of singularities in positive and mixed characteristic.

This paper is a shortened and revised version of an earlier preprint.
\end{abstract}
\maketitle
\setcounter{tocdepth}{1}
\setcounter{tocdepth}{2}
\tableofcontents
\section{Introduction}
Actions of the multiplicative group \( G_m \) play a central role in birational geometry and Mori theory, as recognized in the work of Reid, Thaddeus, and others (see \cite{Tha1, Tha2, Tha3, Reid92, DH98}). Reid \cite{Reid02} emphasized the use of \( G_m \)-equivariant weighted blow-ups and flips in birational transformations, a viewpoint that became foundational in the proof of the Weak Factorization Theorem through the concept of \emph{birational cobordism} (\cite{W-Cobordism, W-Factor, AKMW}). (See Figures~\ref{F1}--\ref{F5} for key illustrations.)

This paper is a \emph{shortened and revised version} of an earlier arXiv preprint, originally posted in 2022 \cite{W22} and updated in 2023 \cite{W23}. It develops a more streamlined and geometric approach to resolution of singularities using \emph{cobordant blow-ups}-a smooth, torus-equivariant analogue of stack-theoretic constructions such as weighted and Kummer blow-ups.

Our method, formulated entirely within the language of the logarithmic schemes with torus actions, offers an alternative but largely equivalent perspective to those in \cite{Marzo-McQuillan,ATW-weighted}, (see also \cite{ATW-principalization, ATW-relative,Quek} )which uses stack-theoretic weighted blow-ups to implement resolution. This  conceptual shift resolves several key limitations of the classical resolution paradigm. Smooth centers- implicitly assuming uniform weights-are often ill-suited to reflect the intrinsic geometry of singularities. They rarely lead to absolute improvement of singularities and typically require auxiliary logarithmic structures to track relative progress which becomes especially problematic in positive or mixed characteristic settings.

By contrast, weighted centers align more naturally with the geometry of singularities through the use of weighted normal cones. In characteristic zero, they enable simplification of the resolution algorithm by relying on a geometric invariant that guarantees absolute improvement. They also provide a coherent framework in positive characteristic, where smooth centers are known to lead to many additional difficulties related to the logarithmic structure-which can be avoided when using weighted centers. Furthermore, they are indispensable in the resolution of singular foliations, where compatibility with the weighted structure is essential (see \cite{Panazzolo}).

This improvement is not merely conceptual. In practice, the use of weighted centers allows singularities to be approximated more accurately, and their resolutions to be guided more effectively by the associated graded structures.

Cobordant blow-ups at weighted centers further stimplify the resolution process. This approach is compatible with all characteristics and circumvents several difficulties associated with traditional smooth centers (see \cite{Abh67, Moh87, Moh96, Hau96, W08, CP08, CP09, Cut11, BV13, KM16, HP18}). 

In characteristic zero, our algorithm yields an efficient resolution strategy based on \emph{rational Rees algebras}, which define canonical invariants and centers using \(\mathbb{Q}\)-gradings. This contrasts with classical approaches that rely on rescaling, equivalence relations, or homogenization (see \cite{Villamayor, Bierstone-Milman, Wlodarczyk, Encinas-Hauser, Kollar, Bierstone-Milman-funct}). Note that  Rees algebras were already used  in \cite{Encinas-Villamayor,BV}  in the context of the classical resolution by smooth centers as a replacement of {\it idealistic exponents}. On the other hand $\ZZ$-graded Rees algebra approach was also pursued by Quek in \cite{Quek} for the purpose of his stack-theoretic logarithmic resolution.

The algorithm presented here draws on fundamental concepts from the standard Hironaka-style framework-including admissibility, coefficient ideals, and maximal contact -as developed in \cite{Hironaka}, \cite{Villamayor}, \cite{Bierstone-Milman}, \cite{Wlodarczyk}, \cite{Encinas-Hauser}, \cite{Encinas-Villamayor}, and \cite{Kollar}. These notions are reinterpreted through the lens of rational  Rees algebras and implemented via cobordant blow-ups.

 In our framework, the centers are constructed recursively by adding graded maximal contacts  associated with  graded coefficients  of generators until a maximal admissible center is identified. While intermediate steps may depend on auxiliary choices, the final output is automatically canonical and governed by invariant data. This approach substantially reduces the complexity and length of computations, as illustrated in Examples in Section~\ref{ex}.  

Finally, in contrast to earlier treatments \cite{Marzo-McQuillan, ATW-weighted}, our construction of weighted resolution and principalization is embedded directly in an \emph{SNC} (simple normal crossing) setup.

The method extends naturally to certain classes of schemes in positive and mixed characteristic. We introduce a class of \emph{almost homogeneous singularities} (see Section~\ref{free}; Theorems~\ref{Homoge1}, \ref{Homoge3}), defined via their weighted tangent cones, and show that they can be resolved using cobordant blow-ups.

A key motivating example-presented in Section~\ref{ex} and generalized in Section~\ref{homog} to arbitrary characteristic-demonstrates that a Brieskorn singularity of the form
\[
f = a_1x_1^{c_1} + \cdots + a_n x_n^{c_n}
\]
can be resolved in a single step via a canonical cobordant blow-up, at least in characteristic zero and under additional assumptions  also in positive characteristic. This example illustrates the recursive construction of invariants and centers in characteristic zero and serves as a prototype for more general almost homogeneous singularities in positive characteristic. We note that resolving Brieskorn singularities or even quadrics  of the form \[
f = a_1x_1^{2} + \cdots + a_n x_n^{2}
\] in characteristic $2$ is extremely difficult over nonperfect fields, and only partial results are currently known~\cite{CPS}.

The ideas in this paper are developed further in \cite{Wlodarczyk-Cox}, where cobordant blow-ups are extended to arbitrary proper birational morphisms using \emph{Cox rings}, and resolution is performed for blow-ups of locally monomial ideals. The output is a smooth scheme with a torus action whose geometric quotient has abelian quotient singularities. These can be resolved canonically by toroidal methods \cite{Wlodarczyk-functorial-toroidal} or by the destackification algorithm of Bergh and Rydh \cite{Bergh-Rydh}.

In addition to their role in resolving varieties, cobordant blow-ups have proven effective in the resolution of singular \emph{foliations}. Unlike classical or weighted blow-ups, cobordant blow-ups may transform some  singular foliations into  nonsingular ones, as illustrated in Section~\ref{foliations} (see Figure~\ref{F5}). This phenomenon does not occur in standard methods. The use of cobordant blow-ups in this context  appears in joint work with Abramovich, Belotto, and Temkin \cite{ABTW25}, further highlighting their broad applicability in singularity theory.
\subsection*{Acknowledgments.} Submitted to the special issue dedicated to James McKernan on the occasion of his 60\textsuperscript{th} birthday.

The author would like to thank Dan Abramovich, Kenji Matsuki, Mircea Musta\c{t}\u{a}, Michael Temkin, Willem Veys, and many others for helpful discussions and suggestions. 
\subsection{Main Theorems in Characteristic Zero}

The results on logarithmic resolution and principalization presented in this section were  established in the initial version of this paper, posted on arXiv in 2022~\cite{W22} (see also \cite{W23}). A related but distinctive approach to the weighted logarithmic technique was subsequently proposed in a recent work~\cite{ABQTW}.

The theorems below provide a functorial resolution of singularities in characteristic zero via sequences of cobordant blow-ups at smooth weighted centers. These are formulated in terms of torus actions and SNC divisors, extending and refining the stack-theoretic approach of~\cite[Section 1.2]{ATW-weighted}.
\subsubsection{Functorial principalization}

\begin{theorem}\label{principalization}
Let \( X \) be a smooth variety over a field \( k \) of characteristic zero, \( E \) an SNC divisor, and \( \mathcal{I} \subset \mathcal{O}_X \) an ideal sheaf. Then there exists a functorial sequence of cobordant blow-ups
\[
X = X_0 \leftarrow X_1 \leftarrow \cdots \leftarrow X_k = X'
\]
at smooth weighted centers \( V(\mathcal{J}_i) \subseteq V(\mathcal{I}_i) \), such that:
\begin{enumerate}
    \item Each \( X_i \) admits  an action  of torus \( T_i \simeq G_m^i \) with finite stabilizers, and the geometric quotients \( X_i / T_i \) exist.
   \item Set \( E_0 := E \), and for \( i \geq 1 \), let \( E_i \) denote the total transform of \( E_{i-1} \). The divisors \( E_i \) on \( X_i \) are \( T_i \)-stable and have simple normal crossings (SNC).
 \item The centers are \( T_i \)-invariant and are  adapted  to \( E_i \).

       \item The final ideal becomes principal: \( \mathcal{O}_{X'} \cdot \mathcal{I} = \mathcal{O}_{X'}(-D') \) for an SNC divisor \( D' \), with \( X' \setminus D' \simeq (X \setminus V(\mathcal{I})) \times T \).
    \item The sequence descends to sequences of weighted blow-ups on both:
    \begin{itemize}
        \item geometric quotients: \( X/T_0 \leftarrow \cdots \leftarrow X'/T_k \), where the total transform of $\cI$ becomes principal;
        \item stack-theoretic quotients: \( [X/T_0] \leftarrow \cdots \leftarrow [X'/T_k] \), where  the total transform of \( \mathcal{I} \) defines an SNC divisor.
    \end{itemize}
    \item The process is functorial under smooth morphisms, field extensions, and group actions preserving \( (\mathcal{I}, E) \).
\end{enumerate}
\end{theorem}

\subsubsection{Embedded desingularization}

\begin{theorem}\label{embedded}
Let \( Y \subset X \) be a closed subscheme of a smooth scheme over \( k \) of characteristic zero, with \( E \) an SNC divisor on \( X \). Then there exists a functorial sequence of cobordant blow-ups
\[
X = X_0 \leftarrow \cdots \leftarrow X_k = X', \quad Y = Y_0 \leftarrow \cdots \leftarrow Y_k = Y'
\]
such that:
\begin{enumerate}
    \item Each \( X_i \) admits a torus action \( T_i \), and the closed subschemes \( Y_i \subset X_i \) are \( T_i \)-invariant.
    \item The divisors \( E_i \) remain SNC under  total transforms.
    \item Centers lie in the singular or non-transversal locus of \( Y_i \) with respect to \( E_i \).
    \item The final transform \( Y' \) is smooth and transverse to \( E_k \).
    \item The sequence descends to geometric and stack-theoretic quotients where \( Y'/T \) has abelian quotient singularities and \( [Y'/T] \subset [X'/T] \) is smooth.
    \item Functoriality holds for smooth morphisms, field extensions, and group actions preserving \( Y \).
\end{enumerate}
\end{theorem}


\subsubsection{Nonembedded desingularization}

\begin{theorem}\label{nonembedded}
Let \( Y \) be a reduced scheme of finite type over \( k \) of characteristic zero. Then there exists a functorial sequence of cobordant blow-ups
\[
Y = Y_0 \leftarrow \cdots \leftarrow Y_k = Y'
\]
such that:
\begin{enumerate}
    \item \( Y' \) is smooth.
    \item Each \( Y_i \) carries a torus action \( T_i \), and admitts quotient \( Y_i/T_i \).
    \item The exceptional locus in \( Y' \) is a \( T \)-invariant SNC divisor.
    \item Over the nonsingular locus, \( Y' \to Y \) is isomorphic to \( Y^{ns} \times T \).
    \item The resolution descends to the weighted blow-ups of geometric quotients \( Y/T_0 \leftarrow \cdots \leftarrow Y'/T_k \) with abelian quotient singularities of  $ Y'/T_k$.
    \item The stack quotients \( [Y/T_0] \leftarrow \cdots \leftarrow [Y'/T_k] \) yield a smooth stack $ [Y'/T_k].$
    \item Functoriality holds for smooth morphisms, field extensions, and group actions.
\end{enumerate}
\end{theorem}
\section{Geometry of Cobordant Blow-ups}\label{Sec:weighted-blowup}

\subsection{Rational powers and \texorpdfstring{$\QQ$}{Q}-ideals}

In \cite{ATW-weighted}, we introduced the notion of \emph{valuative $\QQ$-ideals} $\cJ$, or simply $\QQ$-ideals. These generalize classical ideals and are closely related to \emph{rational powers of ideals} as studied by Huneke-Swanson \cite[Section 10.5]{HS06}. While $\QQ$-ideals are a compact way to encode centers, in this paper we primarily work with their associated rational Rees algebras.

\begin{definition}[\cite{ATW-weighted}]
Let $X$ be an irreducible noetherian scheme. A \emph{$\QQ$-ideal} is an equivalence class of formal expressions $\cJ^{1/n}$, where $\cJ$ is an ideal and $n \in \NN$. Two such expressions $\cJ^{1/n}$ and $\cI^{1/m}$ are equivalent if $(\cJ^m)^\inte = (\cI^n)^\inte$.
\end{definition}

\begin{remark}
If $\cJ^{1/n} \sim \cI^{1/m}$, then $\cI$ and $\cJ$ are said to be \emph{projectively equivalent} with ratio $m/n$; see \cite{Rush07}.
\end{remark}

Each $\QQ$-ideal $\cJ = \cI^{1/n}$ defines a unique integrally closed Rees algebra:
\[
\cA_\cJ := (\cO_X[\cI t^n])^\inte \subset \cO_X[t],
\]
where the superscript \( \texttt{int} \) denotes the {\it integral closure} of the graded algebra in \( \mathcal{O}_X[t] \). 
Conversely, any integrally closed $\ZZ$-graded Rees algebra arises in this way. This correspondence was emphasized by Quek:

\begin{proposition}[\cite{Quek}, Theorem 2.2.5]
There is a bijective correspondence between $\QQ$-ideals $\cJ = \cI^{1/n}$ and integrally closed $\ZZ$-graded Rees algebras, given by
\[
\cJ \mapsto \cA_\cJ = (\cO_X[\cI t^n])^\inte.
\]
\end{proposition}

Associated with a $\QQ$-ideal $\cJ = \cI^{1/n}$ is its \emph{ideal of sections}:
\[
\cJ_X := \{f \in \cO_X \mid f^n \in \cI^\inte\},
\]
which equals the $t$-gradation $(\cA_\cJ)_1$. For ordinary ideals, this recovers the integral closure.

\begin{definition}[\cite{HS06}]
For an ideal $\cI$ and $m/n \in \QQ_{>0}$, the ideal of sections $(\cI^{m/n})_X$ is called the \emph{rational power} of $\cI$.
\end{definition}

Given $\QQ$-ideals $\cJ_i = \cI_i^{a_i/n_i}$, their sum corresponds to the Rees algebra
\[
\cO_X[\cI_1^{a_1} t^{n_1}, \ldots, \cI_k^{a_k} t^{n_k}]^\inte,
\]
which defines another $\QQ$-ideal. One has natural operations:
\[
\cI^{1/n} + \cJ^{1/n} = (\cI + \cJ)^{1/n}, \quad
\cI^{1/n} \cdot \cJ^{1/m} = (\cI^m \cJ^n)^{1/mn}.
\]
This extends the usual sum and product of ideals.

We write $\cJ_1 \subseteq \cJ_2$ if, for sufficiently divisible $N$, one has
\[
(\cI_1^{N/n_1})^\inte \subseteq (\cI_2^{N/n_2})^\inte.
\]

\subsubsection{Graded algebras of \texorpdfstring{$\QQ$}{Q}-ideals}

For any $\QQ$-ideal $\cJ = \cI^{1/n}$, define its graded $\QQ$-ideal algebra as
\[
\cO_X[\cJ t] := \bigoplus_{i \geq 0} \cJ^i t^i.
\]
The associated Rees algebra of ideals on $X$ is given by
\[
\cA_\cJ := (\cO_X[\cJ t])_X := \bigoplus_{i \geq 0} (\cJ^i)_X t^i.
\]
\subsubsection{Functoriality of $\QQ$-ideals}
Let $f\colon X'\to X$ be a morphism of integral schemes, and let $\cJ = \cI^{1/n}$ be a $\QQ$-ideal on $X$. Then $f^*\cJ := (\cO_{X'}\cdot \cI)^{1/n}$ defines a natural pullback $\QQ$-ideal on $X'$.

\subsubsection{Monomial valuations}
\begin{definition}
Let $X$ be a regular irreducible scheme and $u_1,\ldots,u_k$ be part of a local parameter system at $p\in X$. A valuation $\nu$ is \emph{monomial at $p$} with respect to weights $w_i\in \ZZ_{\geq 0}$ if
\[
\cI_{\nu,a,p} := \{f \in \cO_{X,p} \mid \nu(f) \geq a\} = (u^\alpha \mid \sum a_i w_i \geq a).
\]
We say $\nu$ is a \emph{monomial valuation on $X$} if it is monomial at all $p \in V(u_1,\ldots,u_k)$.
\end{definition}

\begin{lemma}
Let $X$ be regular and $V(u_1,\ldots,u_k)$ irreducible. Assigning weights $w_i$ to $u_i$ defines:
\begin{enumerate}
\item A monomial valuation $\nu$ on $\Spec(\cO_{X,p})$ for any $p \in V(u_1,\ldots,u_k)$.
\item A unique monomial valuation $\nu$ on $X$, centered at $V(u_1,\ldots,u_k)$.
\end{enumerate}\qed
\end{lemma}

\subsubsection{Regular weighted centers}
A \emph{regular weighted center} on a regular scheme $X$ is a $\QQ$-ideal locally of the form $(u_1^{a_1},\ldots,u_k^{a_k})$, where $a_i \in \QQ_{>0}$ and $u_i$ are part of a system of parameters. One can write
\[
(u_1^{a_1},\ldots,u_k^{a_k}) = \left(u_1^{na_1},\ldots,u_k^{na_k}\right)^{1/n}
\]
for sufficiently divisible $n$. The ideal of sections is denoted $(u_1^{a_1},\ldots,u_k^{a_k})_X$.

\begin{lemma}[{\cite{ATW-weighted}, \cite{Quek}}\cite{W22}]\label{comp}
Let $X$ be regular, $V(u_1,\ldots,u_k)$ irreducible, and $w_1,\ldots,w_k$ positive integers. Then the following are equivalent:
\begin{enumerate}
\item The $\QQ$-ideal $\cJ = (u_1^{1/w_1},\ldots,u_k^{1/w_k})$.
\item The Rees algebra $\cA_\cJ = \cO_X[u_1 t^{w_1}, \ldots, u_k t^{w_k}]^\inte=\\
\cO_X[u_1t^{c_1},\ldots, u_kt^{c_k} \mid 0< c_1\leq w_1]$.
\item A monomial valuation $\nu$ with $\nu(u_i) = w_i$, giving ideals $\cI_{\nu,a} = (u^\alpha \mid \nu(u^\alpha)\geq a)$.
\end{enumerate}
Moreover, 
\[
\cA_\cJ = \bigoplus_{a \geq 0} \cI_{\nu,a} t^a = (\cO_X[\cJ t])_X.
\]
\end{lemma}

\begin{lemma} \label{comp2-short}\cite{W22}
With $\cJ = (u_1^{1/w_1}, \ldots, u_k^{1/w_k})$, and monomial valuation $\nu$ as above, for any $a \in \QQ_{>0}$,
\[
(\cJ^a)_X = \cI_{\nu,a} = \{f \in \cO_X \mid \nu(f) \geq a\} = (u^\alpha \mid \sum \alpha_i w_i \geq a).
\]
In particular,
\[
(u_1^{a_1},\ldots,u_k^{a_k})_X = (u^\alpha \mid \sum \alpha_i / a_i \geq 1)\qed.
\]
\end{lemma}

\subsubsection{Blow-ups of $\QQ$-ideals}
A $\QQ$-ideal $\cJ = \cI^{1/n}$ on a normal scheme $X$ defines the (normalized) blow-up:
\[
Y := \cProj(\cA_\cJ) \to X,
\]
which transforms $\cJ$ into a Cartier divisor ideal $(\cO_X(-E))^{1/n}$ for some exceptional divisor $E$.

\subsubsection{Weighted and stack-theoretic blow-ups}
For $\cJ = (x_1^{1/w_1},\ldots,x_k^{1/w_k})$, the weighted blow-up is:
\[
Y = \cProj(\cO_X[x_1 t^{w_1}, \ldots, x_k t^{w_k}]^\inte).
\]
The stack-theoretic weighted blow-up (\cite[Section 3.1]{ATW-weighted} ) is the quotient stack:
\[
\left[\left(\Spec_X(\cO_X[x_1 t^{w_1}, \ldots, x_k t^{w_k}]^\inte) \setminus V(t^{w_1} x_1, \ldots, t^{w_k} x_k)\right)/G_m\right].
\]
This construction allows smooth handling of singularities even when the blow-up space is not regular.

\subsection{Rational Rees algebras}

Let $X$ be a scheme. A \emph{rational Rees algebra} (or simply \emph{Rees algebra}) is a finitely generated $\cO_X$-algebra
\[
R = \bigoplus_{a \in \Gamma} R_a t^a \subset \cO_X[t^{1/w_R}],
\]
where $\Gamma$ is a finitely generated additive subsemigroup of $\QQ_{\geq 0}$, $R_0 = \cO_X$, and $R_a \cdot R_b \subseteq R_{a+b}$. The minimal $w_R \in \QQ_{>0}$ such that $\Gamma \subseteq (1/w_R)\cdot \ZZ_{\geq 0}$ is called the \emph{grading denominator}.

The \emph{extended Rees algebra} is $R^{\ext} := R[t^{-1/w}]$ for any multiple $w$ of $w_R$.

The \emph{integral closure} $R^\inte$ is the integral closure of $R$ in $\cO_X[t^{1/w_R}]$, and $R^\Int$ denotes the integral closure in $\cO_X[t^{1/w}]$ for a certain $w$.

The \emph{vertex} of $R$  (or $R^{\ext}$) is the closed set:
\[
V(R) = V(R^{\ext}):=V\left( \sum_{a > 0} R_a \right)  .
\]

\begin{remark}
The filtration need not satisfy $R_a \subseteq R_b$ for $a \geq b$, unless $R$ is integrally closed.
\end{remark}

\subsubsection{Examples}
The Rees algebra of an ideal $\cI$ is the standard $\ZZ$-graded algebra
\[
\cA_\cI := \cO_X[\cI t] = \bigoplus_{n \geq 0} \cI^n t^n,
\]
with extended version $\cA_\cI^{\ext} = \cO_X[t^{-1}, \cI t]$.


\subsubsection{Rees centers}
A \emph{Rees center} is a Rees algebra locally of the form
\[
\cA = \cO_X[x_1 t^{1/a_1}, \ldots, x_k t^{1/a_k}]^\inte,
\]
where $x_1,\ldots,x_k$ are part of a local parameter system, and $a_i \in \QQ_{>0}$. The integral closure is taken in \( \mathcal{O}_X[t^{1/w_A}] \), where \( w_A := \mathrm{lcm}(a_1, \ldots, a_k) \) denotes the least common multiple of the positive rational numbers \( a_1, \ldots, a_k \).
The \emph{extended center} is
\[
\cA^\ext := \cO_X[t^{-1/w}, x_1 t^{1/a_1}, \ldots, x_k t^{1/a_k}],
\]
with $w$ a multiple of $w_A$ so that all $w/a_i$ are integral.

\subsubsection{Rescaling}
Given $w_0 \in \QQ_{>0}$, the \emph{rescaling} of $R$ is defined by
\[
R^{w_0} := \bigoplus_{a \in \Gamma} R_a t^{w_0 a} \subset \cO_X[t^{w_0 / w_R}].
\]

\begin{lemma}
The map $ft^a \mapsto ft^{w_0 a}$ defines an isomorphism $R \simeq R^{w_0}$. In particular, $R$ is integrally closed iff $R^{w_0}$ is.
\end{lemma}

\subsubsection{Monomial valuations}
If $\cA = \cO_X[x_1 t^{1/a_1}, \ldots, x_k t^{1/a_k}]^\inte$, and $V(\cA)$ is irreducible, there exists a unique monomial valuation $\nu_A$ with $\nu_A(x_i) = 1/a_i$, such that
\[
\cA_a = \{ f \in \cO_X \mid \nu_A(f) \geq a \}.
\]
From Lemma \ref{comp} we get
\begin{lemma}(\cite{W22})\label{comp3}
Let $\cA = \cO_X[ x_1 t^{1/a_1}, \ldots, x_k t^{1/a_k}]^\inte$ be a Rees center and $\cA^\ext = \cO_X[t^{-1/w}, x_1 t^{1/a_1}, \ldots, x_k t^{1/a_k}]$ be its extension. Then the integral closure $\cA^\Int$ of $\cA$ in $\cO_X[t^{1/w}]$ equals the nonnegative part of $\cA^\ext$, i.e., $\cA^\Int = (\cA^\ext)_{\geq 0}$.
\end{lemma}

\subsubsection{Regular centers vs. Rees centers}
A regular $\QQ$-ideal center $\cJ = (u_1^{1/w_1}, \ldots, u_k^{1/w_k})$ with $w_i \in \NN$ corresponds to:
\begin{itemize}
  \item the Rees integral algebra $\cA_\cJ = \cO_X[u_1 t^{w_1}, \ldots, u_k t^{w_k}]^\inte$,
  \item and the extended Rees integral algebra $\cA_\cJ^\ext = \cO_X[t^{-1}, u_1 t^{w_1}, \ldots, u_k t^{w_k}]$.
\end{itemize}

More generally, there is a natural correspondence between:
\begin{itemize}
\item Rees centers $\cA=\cO_X[x_1 t^{1/a_1}, \ldots, x_k t^{1/a_k}]^\inte$, with $a_i\in \QQ_{>0}$.
\item Extended Rees centers  $\cA^{\ext}=\cO_X[t^{-1/w_A},x_1 t^{1/a_1}, \ldots, x_k t^{1/a_k}]^\inte$, where $w_A=\lcm(a_1,\ldots,a_k)$
\item Their associated integral Rees algebras $\cA_\cJ=\cA^w=\cO_X[u_1 t^{w_1}, \ldots, u_k t^{w_k}]^\inte$,
\item  Their associated extended integral Rees algebras $$\cA_\cJ^\ext=(\cA^{\ext})^w= \cO_X[t^{-1}, u_1 t^{w_1}, \ldots, u_k t^{w_k}]$$
\item Their associated $\QQ$-ideal centers $\cJ = (x_1^{1/w_1}, \ldots, x_k^{1/w_k})$ with $w_i \in \NN$,
\end{itemize}

\subsection{Cobordant Blow-ups}

\subsubsection{Good and geometric quotients}

Let $T = \Spec(\ZZ[t_1^{\pm 1}, \ldots, t_k^{\pm 1}])$ act relatively affinely on a scheme $X$ over $\ZZ$. A \emph{good (GIT) quotient} is an affine $T$-invariant morphism $\pi: X \to Y = X \sslash T$ such that $\cO_Y \simeq \pi_*(\cO_X)^T$. The quotient is \emph{geometric} if all geometric fibers of $\pi$ are single $T$-orbits.

\subsubsection{Birational cobordisms}
\cite[Definition 2]{W-Cobordism}

The concept of {\it birational cobordism} was originally introduced over a field. For motivational clarity, we briefly recall the definition in that setting. Given a $G_m$-action on an integral scheme $B$ over a field, define:
\[
B_- := \{ p \in B \mid \lim_{t \to 0} tp \text{ does not exist} \}, \quad
B_+ := \{ p \in B \mid \lim_{t \to \infty} tp \text{ does not exist} \}.
\]
We say $B$ is a \emph{birational cobordism} between $X_1$ and $X_2$ if:
\begin{itemize}
  \item $B_\pm$ are Zariski open and nonempty,
  \item geometric quotients $B_\pm / T \simeq X_{1,2}$ exist,
  \item the birational map $\phi: X_1 \dashrightarrow X_2$ factors via $B_\pm / T$.
\end{itemize}

The notion extends naturally over more general bases such as $\Spec(\ZZ)$, though the definition of limits in that context may differ slightly and will not be used in this paper.
\subsubsection{Example: Weighted blow-up via cobordism}

Let \( T \) act on $$ \mathbb{A}^{n+1} = \Spec(k[x_0, \ldots, x_n]) $$via
\[
t \cdot (x_0, x_1, \ldots, x_n) = (t^{-1} x_0,\, t^{w_1} x_1,\, \ldots,\, t^{w_k} x_k).
\]
Then the open charts are
\[
B_- = \mathbb{A}^{n+1} \setminus V(x_0), \quad B_+ = \mathbb{A}^{n+1} \setminus V(x_1, \ldots, x_n).
\]
Using toric geometry, the quotient map
\[
B_+/T \longrightarrow B_-/T = B // T = \Spec(k[x_0, \ldots, x_n]^T) = \Spec(K[u_1, \ldots, u_k]),
\]
with \( u_i = x_i / x_0^{w_i} \) and \( t^{-1} = x_0 \), is the weighted blow-up at the \(\mathbb{Q}\)-ideal
\[
\mathcal{J} = (u_1^{1/w_1}, \ldots, u_k^{1/w_k}).
\]
The total space is given by
\[
B = \Spec(k[x_0, \ldots, x_n]) = \Spec(k[t^{-1}, t^{w_1} u_1, \ldots, t^{w_k} u_k]),
\]
with
\[
B_+ = B \setminus V(t^{w_1} u_1, \ldots, t^{w_k} u_k), \quad B_- = B \setminus V(t^{-1}).
\]


\subsubsection{Cobordant blow-up: definition}

\begin{definition}
Let \( X \) be a regular scheme, and let
\[
\mathcal{A}^{\mathrm{ext}} = \mathcal{O}_X[t^{-1/w}, x_1 t^{1/a_1}, \ldots, x_k t^{1/a_k}]
\]
be an extended Rees algebra associated with a \(\mathbb{Q}\)-ideal center \( \mathcal{J} = (x_1^{1/w_1}, \ldots, x_k^{1/w_k}) \), where \( w = \mathrm{lcm}(a_1, \ldots, a_k) \) and \( w_i = w / a_i \). Consider the rescaled algebra:
\[
(\mathcal{A}^{\mathrm{ext}})^w = \mathcal{A}^{\mathrm{ext}}_{\mathcal{J}} = \Spec_X\left( \mathcal{O}_X[t^{-1}, t^{w_1} x_1, \ldots, t^{w_k} x_k] \right).
\]
We define the \emph{full cobordant blow-up} of \( \mathcal{A} \) (or equivalently, of the center \( \mathcal{J} \)) as
\[
B := \Spec_X\left( (\mathcal{A}^{\mathrm{ext}})^w \right) = \Spec_X\left( \mathcal{A}^{\mathrm{ext}}_{\mathcal{J}} \right) = \Spec_X\left( \mathcal{O}_X[t^{-1}, t^{w_1} x_1, \ldots, t^{w_k} x_k] \right).
\]

We distinguish the following components of the cobordant blow-up:

\begin{itemize}
    \item The \emph{trivial cobordant blow-up} is the projection
    \[
    \sigma_-: B_- := B \setminus V(t^{-1}) = \Spec_X\left( \mathcal{O}_X[t^{\pm 1}] \right) \longrightarrow X,
    \]
    which corresponds to the product \( X \times \mathbb{G}_m \).

    \item The \emph{cobordant blow-up} is the \( T \)-equivariant morphism
    \[
    \sigma_+: B_+ := B \setminus V(t^{w_1} x_1, \ldots, t^{w_k} x_k) \longrightarrow X,
    \]
    where \( B_+ \) is the complement of the vertex and corresponds to the weighted blow-up determined by \( \mathcal{J} \).

    \item The \emph{vertex} of the cobordant blow-up is the closed subscheme
    \[
    \operatorname{Ver}(B) := V(t^{w_1} x_1, \ldots, t^{w_k} x_k) = V\left( (\mathcal{A}^{\mathrm{ext}})^w \right) = B \setminus B_+,
    \]
    which represents the geometric counterpart of the center $\cJ$ on $X$ inside \( B \).     
\end{itemize}
\end{definition}

In this setup, \( B_+ \) and \( B_- \) define a birational cobordism over \( X \), interpolating between the trivial product \( X \times \mathbb{G}_m \) and the weighted blow-up determined by \( \mathcal{J} \). See Figure~\ref{F1}.
\begin{remark}
The algebras ${\cO_X}[t^{-1}, t^{w_1}x_1, \ldots, t^{w_k}x_k]$ used in the construction of cobordant blow-ups also appeared independently in work by Quek and Rydh, developed in the context of stack-theoretic blow-ups. Their approach, which emphasizes a stack-theoretic interpretation, was made publicly available on their homepage shortly after the first version \cite{W22} of the present paper appeared on the arXiv; see \cite{Rydh-proj}. In contrast, cobordant blow-ups provide a torus-action-based viewpoint that is well-suited for applications in positive and mixed characteristic, as well as in the resolution of foliations.

Both approaches can be traced back to ideas introduced in \cite{ATW-weighted} and are naturally connected to extended Rees algebras studied by Rees, Swanson, and Huneke; see \cite[Def.\ 5.1]{HS06}.

Over a field $k$ of characteristic zero, the stack-theoretic quotient of a cobordant blow-up $[B_+/\GG_m] \to X$ defines a stack-theoretic weighted blow-up, in the sense of \cite[Section~3.1]{ATW-weighted}. This interpretation provides a direct bridge between our torus-equivariant construction and the stack-theoretic framework used in earlier approaches.
\end{remark}

\subsubsection{Exceptional divisor}

\begin{lemma} \label{divisor}
The cobordant blow-up transforms the $\QQ$-ideal center $\cJ$ into the ideal of the exceptional divisor $D = V(t^{-1})$ on $B_+$:
\[
\cJ \cdot \cO_{B_+} = t^{-1} \cdot \cO_{B_+}.
\]
In particular, the inverse image of the center  $\cJ$ in \( B \) is given by
\[
V(\mathcal{J} \cdot \mathcal{O}_B) = \Ver(B) \cup D,
\]
where \( \Ver(B) \) is the vertex and \( D = V_B(t^{-1}) \) is the exceptional divisor.
\end{lemma}
\begin{proof}
We write $\cJ \cdot \cO_{B_+} = (\cJ \cdot t) \cdot t^{-1} \cdot \cO_{B_+}$. Since $(\cJ \cdot t) = ((x_1 t^{w_1})^{1/w_1}, \ldots, (x_k t^{w_k})^{1/w_k})$ is a trivial $\QQ$-ideal on $B_+$, it equals $\cO_{B_+}$. Hence, $\cJ \cdot \cO_{B_+} = t^{-1} \cdot \cO_{B_+}$.
\end{proof}

\begin{figure}[ht]
\centering
\begin{tikzpicture}[scale=1.2, every node/.style={font=\small}, >=Latex]

\begin{scope}[shift={(0,0)}]

\foreach \x in {-1.3, 1.3} {
  \draw[thick] (\x,-0.5) -- (\x,0.65);
  \draw[thick, ->] (\x,0.65) -- (\x,1.15); 
  \draw[thick] (\x,1.15) -- (\x,1.8);
}

\draw[blue, thick, ->] (0,-0.2) -- (0,0.78) node[below left, blue] {};
\node[blue,thick] at (0,0.4) {$V$ vertex};

\foreach \angle in {140, 120, 90, 60, 40}
  \draw[green!50!black, thick, ->] (0,0.8) -- ++(\angle:1.2);

\node[green!50!black] at (1.1,2) {$D$ exc. divisor};

\draw[very thick] (-1.5,-0.8) -- (1.5,-0.8);
\fill[red] (0,-0.8) circle (1.5pt);
\node[below, red] at (0,-0.8) {$V(\mathcal{J})$ center};

\node at (0,-2.0) {$B$ full cobord. blow-up};

\end{scope}

\begin{scope}[shift={(3.5,0)}]

\foreach \x in {-1.3, 1.3} {
  \draw[thick] (\x,-0.5) -- (\x,0.65);
  \draw[thick, ->] (\x,0.65) -- (\x,1.15);
  \draw[thick] (\x,1.15) -- (\x,1.8);
}

\foreach \angle in {140, 120, 90, 60, 40}
  \draw[green!50!black, thick, ->] (0,0.8) -- ++(\angle:1.2);

\node[green!50!black] at (1.1,2) {$D$ exc. divisor};

\node[green!50!black,very thick,draw, circle, inner sep=1.5pt] at (0,0.8) {};
\draw[very thick] (-1.5,-0.8) -- (1.5,-0.8);
\fill[red] (0,-0.8) circle (1.5pt);
\node[below, red] at (0,-0.8) {$V(\mathcal{J})$ center};

\node at (0,-2.0) {$B_+ = B \setminus V$ cob. blow-up};

\end{scope}

\begin{scope}[shift={(7,0)}]

\foreach \x in {-1.3, -0.9, -0.5, 0.5, 0.9, 1.3} {
  \draw[thick] (\x,-0.5) -- (\x,0.65);
  \draw[thick, ->] (\x,0.65) -- (\x,1.15);
  \draw[thick] (\x,1.15) -- (\x,1.8);
}

\draw[blue, thick, ->] (0,-0.2) -- (0,0.8) node[above right, blue] {$V$};

\draw[very thick] (-1.5,-0.8) -- (1.5,-0.8);
\fill[red] (0,-0.8) circle (1.5pt);
\node[below, red] at (0,-0.8) {$V(\mathcal{J})$ center};

\node at (0,-2.0) {$B \setminus D = X \times \mathbb{G}_m$};

\end{scope}

\end{tikzpicture}
\caption{Cobordant blow-up: the role of the vertex $V$ and the exceptional divisor $D$.}\label{F1}
\label{fig:cobordant-blowup}
\end{figure}

\subsubsection{Local description}

Let $x_1, \ldots, x_n$ be local parameters on $X$, and let $\cJ = (x_1^{1/w_1}, \ldots, x_k^{1/w_k})$. Then locally:
\[
B = \Spec\left( \cO_X[t^{-1}, x_1', \ldots, x_n'] / (x_i' t^{-w_i} - x_i) \right),
\]
with $x_i' = x_i \cdot t^{w_i}$ and $x_j' = x_j$ for $j > k$. The torus acts via $t$, and $B$ is a regular closed subscheme of $X \times \AA^{n+1}$.

\begin{remark}
Sometimes it is convenient to substitute $s = t^{-1}$ so that $x_i = x_i' s^{w_i}$.
\end{remark}

\subsubsection{Toric cobordant blow-ups}

In the toric setting, $X_\sigma = \Spec(k[x_1,\ldots,x_n])$ corresponds to a cone $\sigma \subset \RR^n$. The full cobordant blow-up of $(x_1^{1/w_1}, \ldots, x_n^{1/w_n})$ corresponds to the cone:
\[
\tau = \langle e_1, \ldots, e_n, v + e_{n+1} \rangle,
\]
where $v = w_1 e_1 + \ldots + w_n e_n$. The map $X_\tau \to X_\sigma$ projects along $e_{n+1}$ and represents a toric birational cobordism. The upper boundary of $\tau$ corresponds to the star subdivision of $\sigma$ at $v$, and $B_+ / T$ is the weighted blow-up of $X_\sigma$ at $\cJ$ (See Figure \ref{F2}).

\begin{figure}[ht]
\centering
\begin{tikzpicture}[scale=1, every node/.style={font=\small}]

\draw[thick] (0,0) -- (1.3,2) -- (3,0) -- cycle;
\node at (1.3,2.3) {\(\Delta\)};
\node at (1.3,3.5) {\textcolor{ForestGreen}{Full cobordant blow-up}};

\draw[->, thick] (1.2,-0.2) -- (1.2,-1.4);
\draw[->, thick] (1.8,-0.2) -- (1.8,-1.4);
\node at (2.5,-0.8) {$\pi$};
\draw[thick, magenta] (0,-2) -- (3,-2);
\node[magenta] at (1.3,-2.3) {\(\sigma\)};
\node[magenta] at (1.3,-2.8) {flat section of regular cone $\sigma$};
\node[magenta] at (-0.2,-2) {\(e_1\)};
\draw[thick, magenta] (0,-2) circle (1pt); 
\node[magenta] at (3.3,-2) {\(e_2\)};
\draw[thick, magenta] (3,-2) circle (1pt); 

\begin{scope}[shift={(4,0)}]
\draw[thick] (0,0) -- (1.3,2) -- (3,0);
\node at (1.3,2.3) {\(\Delta_+\)};

\node at (1.3,3.5) {\textcolor{ForestGreen}{Cobordant blow-up}};
\node at (1.3,3) {\textcolor{ForestGreen}{Upper boundary}};
\draw[->, thick] (1.2,-0.2) -- (1.2,-1.4);
\draw[->, thick] (1.8,-0.2) -- (1.8,-1.4);
\node at (2.5,-0.8) {$\pi$};
\draw[thick, magenta] (0,-2) -- (3,-2);
\draw[thick, magenta] (1.3,-2) circle (1pt); 
\node[magenta] at (1.3,-2.3) {\(v\)};
\node[magenta] at (-0.2,-2) {\(e_1\)};
\draw[thick, magenta] (0,-2) circle (1pt); 
\node[magenta] at (3.3,-2) {\(e_2\)};
\draw[thick, magenta] (3,-2) circle (1pt); 
\node[magenta] at (0.7,-1.7) {\(\pi(\Delta_+)\)};
\node[magenta] at (1.3,-2.8) {star subdivision of \(\sigma\) at $v$};

\end{scope}

\begin{scope}[shift={(8,0)}]
\draw[thick] (0,0) -- (3,0);
\node at (2.5,0.3) {\(\Delta_-\)};
\node at (1.3,3) {\textcolor{ForestGreen}{Lower boundary}};
\node at (1.3,3.5) {\textcolor{ForestGreen}{Trivial cob. blow-up}};

\draw[->, thick] (1.2,-0.2) -- (1.2,-1.4);
\draw[->, thick] (1.8,-0.2) -- (1.8,-1.4);
\node at (2.5,-0.8) {$\pi$};
\draw[thick, magenta] (0,-2) -- (3,-2);
\node[magenta] at (-0.2,-2) {\(e_1\)};
\draw[thick, magenta] (0,-2) circle (1pt); 
\node[magenta] at (3.3,-2) {\(e_2\)};
\draw[thick, magenta] (3,-2) circle (1pt); 
\node[magenta] at (1.3,-2.3) {\(\sigma\)};
\end{scope}


\end{tikzpicture}
\caption{Toric cobordant blow-up as the lifting of the star subdivision} \label{F2}
\end{figure}

\subsection{Resolution Invariant on Smooth Schemes with SNC Divisors}

\subsubsection{Compatibility with SNC Divisors}
\begin{definition}\label{compa}
A \emph{coordinate system} (or \emph{system of local parameters}) \( x_1, \ldots, x_n \) on an open subset \( U \) of a regular scheme \( X \) is said to be \emph{adapted to a simple normal crossings (SNC) divisor} \( E \) if every irreducible component \( E_i \) of \( E \) that intersects \( U \) is locally defined by \( V(x_i) \). 

At a point \( p \in U \), a coordinate \( x_i \) is called \emph{divisorial} if \( p \in V(x_i) = E_i \) for some component \( E_i \subseteq E \), and \emph{free} otherwise. For simplicity, we often identify \( E \) and its components \( E_i \) with the corresponding coordinates \( x_i \). A Rees center \( \cA = \cO_X[x_1t^{1/a_1}, \ldots, x_kt^{1/a_k}]^\inte \) or its extension is \emph{ adapted  to $E$} if the coordinates $x_1, \ldots, x_k$ are  adapted  to $E$.
\end{definition}

\begin{remark}
This matches the logarithmic viewpoint:  divisorial  coordinates define a chart $U \to \Spec(\ZZ[P])$ where $P$ is generated by the divisorial parameters. The free coordinates define coordinates on the stratum $V(P \setminus 0)$.
\end{remark}

\subsubsection{Centers adapted to an SNC Divisor}
Let \( \QQ_+ := \QQ \amalg \{a_+ \mid a \in \QQ\} \) with the order $a_+ > a$, and $a_+ < b$ if $a < b$. Define addition, scalar multiplication, and subtraction on \( \QQ_+ \) from \( \QQ \) in the natural way.

Given a center \( \cA = \cO_X[x_1t^{1/a_1}, \ldots, x_kt^{1/a_k}]^\inte \)  adapted  to $E$, associate to each coordinate $x_i$ the symbol $b_i := a_{i+}$ if divisorial and $b_i := a_i$ otherwise. Assume $b_1 \leq \ldots \leq b_k$, and define:
\[ \inv(\cA) := (b_1, \ldots, b_k). \]

\begin{remark}
Unlike the logarithmic order from \cite[Section 3.6]{ATW-principalization}, which gives divisorials weight $\infty$, we assign $1_+$---the minimal strictly greater value than 1. This is compatible with standard derivations and yields SNC resolutions.
\end{remark}

\subsubsection{Canonical Invariant}
\begin{definition}\label{inva3}
Let $X$ be a regular scheme with SNC divisor $E$. For an ideal $\cI$, define the \emph{canonical invariant} at $p$ as:
\[ \inv_p(\cI) := \max \{ (b_1, \ldots, b_k) \mid \cI t \subseteq \cO_X[x_1t^{1/a_1}, \ldots, x_kt^{1/a_k}]^\inte \}, \]
where the $x_i$ are  adapted  to $E$ at $p$. The corresponding center $\cA$ is \emph{maximal admissible} at $p$.
\end{definition}

\begin{remark}
We prove in Section \ref{algo} that such a maximum is attained by a unique maximal admissible center.

In the case where no boundary divisor is present, this definition agrees with the invariant defined in \cite{ATW-weighted}.

\end{remark}

\subsubsection{Presentation of Centers}
Centers can be presented more compactly as:
\[ \cA = \cO_X[\overline{x}_1t^{1/a_1}, \ldots, \overline{x}_kt^{1/a_k}]^\inte, \]
with blocks \( \overline{x}_i = (x_{i1}, \ldots, x_{ik_i}) \), where $a_1<\ldots<a_k$  and associated weights \( \overline{b}_i =(\underbrace{a_i,\ldots,a_i}_{\text{free}}, \underbrace{a_{i+},\ldots,a_{i+}}_{\text{divisorial}})
 \).
\[ \inv_p(\cI) = \max \{ (\overline{b}_1, \ldots, \overline{b}_k) \mid \cI t \subseteq \cA \}. \]

\subsubsection{Order of an Ideal}
\begin{lemma}\label{order3}
\[ \ord_p(\cI) = \max \{ a_1 \in \QQ_{>0} \mid \cI t \subseteq \cO_X[m_pt^{1/a_1}] \}. \]
\end{lemma}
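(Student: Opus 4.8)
The plan is to rewrite the membership on the right-hand side as a lower bound on the $m_p$-adic order of the elements of $\cI$, and then compare with the classical definition of $\ord_p(\cI)$. First I would pass to the regular local ring $\cO:=\cO_{X,p}$ with maximal ideal $m_p=(x_1,\dots,x_n)$ generated by a regular system of parameters, and record the two standard facts about a regular local ring that the argument rests on: the $m_p$-adic order $\ord_p$ is additive, since $\gr_{m_p}(\cO)$ is a polynomial ring over the residue field and hence a domain; and every power $m_p^{\,j}$ is integrally closed, since that polynomial ring is normal. Equivalently, $\ord_p$ is the unique Rees valuation of $m_p$, normalized by $\ord_p(m_p)=1$.

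Next I would fix $a_1\in\QQ_{>0}$ and pin down which $f\in\cO$ satisfy $ft\in\cO_X[m_p t^{1/a_1}]$. Viewing $\cO_X[m_p t^{1/a_1}]=\bigoplus_{j\ge 0} m_p^{\,j}\,t^{\,j/a_1}$ as the rational Rees algebra of the $\QQ$-ideal $m_p^{1/a_1}=(x_1^{1/a_1},\dots,x_n^{1/a_1})$, i.e. of the monomial valuation $a_1^{-1}\ord_p$ (whose vertex $V(x_1,\dots,x_n)=\{p\}$ is irreducible, so Lemma \ref{comp3} applies), the degree-$1$ graded piece of $\cO_X[m_p t^{1/a_1}]^{\inte}$ is $\{f\in\cO:\ord_p(f)\ge a_1\}$; without integral closure the degree-$1$ piece is $m_p^{\,a_1}$ if $a_1\in\ZZ_{>0}$ and is $0$ otherwise, because no exponent $j/a_1$ equals $1$ unless $a_1$ is an integer. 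The one computation here is the routine check that, for a positive integer $N$ divisible by the denominator of $a_1$, one has $f^N t^N\in\cO_X[m_p t^{1/a_1}]$ exactly when $f^N\in m_p^{\,a_1 N}$, i.e. when $N\,\ord_p(f)=\ord_p(f^N)\ge a_1 N$, i.e. when $\ord_p(f)\ge a_1$; this uses precisely the two facts above and is essentially the remark already made in the Introduction. I expect this small bit of gradation bookkeeping to be the only mild obstacle.

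Finally I would quantify over $\cI$: the inclusion $\cI t\subseteq\cO_X[m_p t^{1/a_1}]$ holds iff $\ord_p(f)\ge a_1$ for every $f\in\cI$, i.e. iff $\ord_p(\cI)=\min_{f\in\cI}\ord_p(f)\ge a_1$. Hence the set of admissible $a_1$ is $(0,\ord_p(\cI)]\cap\QQ$ in the integrally closed reading, and $\{a_1\in\ZZ_{>0}:a_1\le\ord_p(\cI)\}$ otherwise; in either case its maximum equals $\ord_p(\cI)$, and the maximum is attained because $\ord_p(\cI)$ is itself a positive integer lying in the set. Alternatively, the statement is simply the $k=1$ case of the description of maximal admissible centers, with the single block equal to $m_p$, as remarked just before the lemma.
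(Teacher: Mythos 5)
Your proof is correct and follows essentially the same route as the paper: translate the Rees-algebra membership into the condition $\ord_p(f)\ge a_1$ via the monomial valuation of $m_p$ (Lemma \ref{comp3}), then observe that the maximum over $a_1$ is $\ord_p(\cI)$. You are a bit more careful than the paper in noticing that, read without the integral closure, the condition $\cI t\subset\cO_X[m_p t^{1/a_1}]$ can only hold for integer $a_1$, and in verifying that the maximum is the same under either reading — a worthwhile clarification, since the paper's proof invokes Lemma \ref{comp3} (which concerns the integrally closed algebra) even though the lemma statement omits the closure.
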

\begin{proof}
By definition, $\ord_p(\cI) = \max \{ a_1 \mid \cI_p \subseteq m_p^{\lceil a_1 \rceil} \}$. Lemma \ref{comp3} ensures $\cI t \subseteq \cO_X[m_p t^{1/a_1}]$ implies $\cI_p \subseteq m_p^{\lceil a_1 \rceil}$.
\end{proof}

\subsubsection{Admissibility for Ideals}
\begin{lemma}\label{admi}
For an ideal $\cI$, the following are equivalent near $p$:
\begin{enumerate}
  \item $\cI t \subseteq \cA = \cO_X[x_1t^{1/a_1}, \ldots, x_kt^{1/a_k}]^\inte$,
  \item $\cI t \subseteq \cA^\ext := \cO_X[t^{-1/w_A}, x_1t^{1/a_1}, \ldots, x_kt^{1/a_k}]$,
  \item $\cI t^{w_A} \subseteq \cA^{w_A} = \cO_X[x_1t^{w_1}, \ldots, x_kt^{w_k}]^\inte$,
  \item $\cI t^{w_A} \subseteq (\cA^\ext)^{w_A} = \cO_X[t^{-1}, x_1t^{w_1}, \ldots, x_kt^{w_k}]$.
\end{enumerate}
These equivalences also hold if $\cI t$ is replaced by a Rees algebra $R$.
\end{lemma}
\begin{proof} 
 
The condition (3) and (4) are obtained by  rescaling the conditions in (1) and (2). On the other hand, by Lemma \ref{comp}, $$(\cA^{\ext})_{\geq 0}=\cO_X[t^{-1}, x_1t^{w_1},\ldots,x_kt^{w_k}]_{\geq 0}=\cO_X[x_1t^{w_1},\ldots,x_kt^{w_k}]^\inte=\cA.$$
This implies that condition (1) and   (2) are equivalent.
\end{proof}

\begin{definition}
Such a center  $\cA$ is called \emph{admissible} for $\cI$ at $p$.
\end{definition}

\subsubsection{Admissibility for Rees Algebras}
\begin{definition}
A Rees center $\cA = \cO_X[x_1t^{1/a_1}, \ldots, x_kt^{1/a_k}]^\Int$ is \emph{admissible} for a Rees algebra $R$ at $p\in X$ if any of the equivalent conditions in Lemma~\ref{admi} holds with $\cI t$ replaced by  $R$.
\end{definition}

\subsubsection{Resolution Invariant for Rees Algebras}
\begin{definition}
For Rees algebra $R$, define the invariant:
\[ \inv_p(R) := \max \{ (\overline{b}_1, \ldots, \overline{b}_k) \mid R \subseteq \cA \}. \]
\end{definition}

\begin{remark}
The unique center $\cA$ attaining this maximum exists (see Section~\ref{algo}).
\end{remark}

\subsubsection{Order of Rees Algebras}
\begin{definition}
Let \( R = \bigoplus R_a t^a \) be a Rees algebra. For any \( f \in R_b \), define the order at a point \( p \in X \) by
\[
\ord_p(ft^b) := \frac{\ord_p(f)}{b}.
\]
Then the order of the Rees algebra \( R \) at \( p \) is given by
\[
\ord_p(R) := \min_{a \in \Gamma_{>0}} \left\{ \frac{\ord_p(R_a)}{a} \right\},
\]
where \( \Gamma_{>0} \) denotes the set of degrees \( a > 0 \) such that \( R_a \neq 0 \).
\end{definition}

\begin{lemma}
The order of an ideal $\cI$ at a point $p$ coincides with the order of its associated Rees algebra:
\[
\ord_p(\cI) = \ord_p\left(\cO_X[\cI t]\right). \qed
\]

\end{lemma}
\begin{lemma}\label{order}
If \( R \subseteq \mathcal{A} = \mathcal{O}_X[\overline{x}_1 t^{1/a_1}, \ldots, \overline{x}_k t^{1/a_k}]^{\mathrm{int}} \), then \( a_1 \leq \ord_p(R) \), and the maximum value \( a_1 = \ord_p(R) \) is achieved for some admissible centers.
\end{lemma}
\begin{proof}
The inclusion $R \subseteq \cA \subseteq \cO_X[m_pt^{1/a_1}]^\Int$ implies $R_a \subseteq m_p^{\lceil a a_1 \rceil}$, and thus $\ord_p(R) \geq a_1$. Conversely, if $\ord_p(R) = a_1$, then for all $a$, $R_a \subseteq m_p^{\lceil a a_1 \rceil}$, so $R \subseteq \cO_X[m_pt^{1/a_1}]^\Int$.
\end{proof}

\subsubsection{Associated Invariants and Gradation}
For a Rees center $\cA$, define:
\[ \inv(\cA) = (\overline{b}_1, \ldots, \overline{b}_k), \quad \inv^1(\cA) = \overline{b}_1 \quad \ord(\cA)=a_1\]
Define:
\[ \inv^1_p(R) := \max \{ \inv^1(\cA) \mid R \subseteq \cA \}. \]

Note that in the above definition, it suffices to consider centers \( \mathcal{A} \) of order \( a_1 = \operatorname{ord}_p(R) \). In this case, the invariant \( \inv^1(\mathcal{A}) \) is a \( k \)-tuple consisting of \( a_1 \) and  \( a_{1+} \), which takes on only finitely many possible values. 

Therefore, there exists a center \( \mathcal{A} \) such that
\[
\inv^1(\mathcal{A}) = \inv^1_p(R),
\]

\subsubsection{Graded Rees Algebra and Initial Forms}

\begin{lemma} \label{g2}
Let \( R = \bigoplus R_b \) be a Rees algebra of order \( a_1 \) at a point \( p \in X \), so that \( R_b \subseteq m_p^{b a_1} \) for all \( b \). Then the associated graded algebra of \( R \) at \( p \) is generated by the initial forms
\[
\inn_p(f t^b) := (f + m_p^{b a_1 + 1}) t^b \quad \text{for } f \in R_b,
\]
and is given by
\[
\gr_p(R) := \bigoplus \frac{R_b + m_p^{b a_1 + 1}}{m_p^{b a_1 + 1}} t^b 
\subseteq \mathcal{O}_X\left[\left( \frac{m_p}{m_p^2} \right) t^{1/a_1} \right].
\]

This graded algebra can be viewed as a subalgebra
\[
\gr_p(R) \subseteq K_p[x_1, \ldots, x_k][t^{1/a_1}],
\]
where \( x_1, \ldots, x_k \) form a local system of parameters at \( p \), and \( K_p = \mathcal{O}_{X, p} / m_p \) is the residue field. The algebra \( \gr_p(R) \) is generated by elements of the form
\[
\inn_p(f t^b) := \inn_p(f)\, t^b,
\]
where \( \inn_p(f) \) is the initial form of \( f \in R_b \) of degree \( b a_1 = \ord_p(f) \) in the expansion
\[
f(x) \in R_b \subseteq \widehat{\mathcal{O}}_{X, p} \cdot R_b \subseteq \widehat{\mathcal{O}}_{X, p} = K_p[[x_1, \ldots, x_k]],
\]
in the gradation $t^b=(t^{1/a_1})^{ba_1}$ or zero if \( \ord_p(f) > b a_1 \).

\end{lemma}
\begin{lemma}\label{g3}
If $R$ has order $a_1$ at $p$, then:
\begin{itemize}
  \item $\ord_p(R) = \ord_p(\gr_p(R))$,
  \item $\inv_p^1(R) = \inv_p(\gr_p(R)) = \inv_p^1(\gr_p(R))$.
\end{itemize}
\end{lemma}
\begin{proof}
If \( R \subseteq \mathcal{A} \) and \( \inv^1_p(R) = \inv^1(\mathcal{A}) \), then it follows that
\[
\gr_p(R) \subseteq \gr_p(\mathcal{A})=K_p[\overline{x}_1 t^{1/a_1}],
\]
and hence,
\[
\inv^1(\gr_p(R)) \geq \inv^1(\gr_p(\mathcal{A})) = \inv^1(\mathcal{A})= \inv^1(R).
\]

Conversely, suppose \( \gr_p(R) \subseteq \mathcal{A} = \mathcal{O}_X[\overline{x}_1 t^{1/a_1}]=\gr_p(\mathcal{A} )\), and assume
\[
\inv_p(\gr_p(R)) = \inv^1_p(\gr_p(R)) = \inv^1(\mathcal{A}) = \inv(\mathcal{A}).
\]
Then, for some \( a_2 > a_1 \), there exists an extension
\[
R \subseteq \mathcal{A}' := \left( \mathcal{O}_X[\overline{x}_1 t^{1/a_1}, \overline{x}_2 t^{1/a_2}] \right)^{\mathrm{Int}},
\]
together with a coordinate system \( (\overline{x}_1, \overline{x}_2) \), such that
\[
\inv^1_p(R) \geq \inv^1(\mathcal{A}') = \inv^1(\mathcal{A}) = \inv^1_p(\gr_p(R))= \inv_p(\gr_p(R)).
\]\end{proof}

\subsubsection{Uniqueness of presentation of the invariant of  centers}
The following Lemma shows that $\inv(\cA)$ is well-defined and  independent upon presentation.
We shall need the following result:
\subsubsection{Replacement Lemma}
\begin{lemma} \label{val} (see also \cite{ATW-weighted} for non-divisorial case, $i=1$).

Let $\cA=\cO_X[\overline{x}_1 t^{1/a_1},\ldots\overline{x}_kt^{1/a_k}]^{\inte}$ be a center and $p\in V(\cA)$ be a point.
Let $\overline{x}'$ be a system of local parameters  adapted  to $E$ at a point $p$, such that $$\overline{x}' t^{1/a_1} \subset \cA=\cO_X[\overline{x}_1 t^{1/a_1},\ldots\overline{x}_kt^{1/a_k}]^\inte$$  in a neighborhood  of $p$ then 
one can find  the coordinates $\overline{x}'_1,\ldots, \overline{x}_k$ such that  $\overline{x}\subseteq \overline{x}_1'$ and   $$\cA=\cO_X[\overline{x_1}' t^{1/a_1},\ldots,,\ldots, \overline{x}_kt^{1/a_k}]^\inte$$ in a neighborhood of $p\in X$.
	\end{lemma}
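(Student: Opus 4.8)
The plan is to localize at $p$ and pass to the completion $\widehat{\cO}_{X,p}$ (which does not change the Rees centers in question, since they are generated by coordinate monomials), and to work with the monomial valuation attached to $\cA$. Near $p$ the set $V(\cA)=V(\overline x_1,\ldots,\overline x_k)$ is a coordinate subspace, so Lemma \ref{comp3} supplies a unique monomial valuation $\nu$ with $\nu(x_{jl})=1/a_j$ for every coordinate $x_{jl}$ of the block $\overline x_j$; for it $\cA=\bigoplus_a\cI_{\nu,a}t^a$ with $\cI_{\nu,a}=\{f:\nu(f)\ge a\}$, and $\cA$ is determined by $\nu$. Hence it suffices to produce coordinates $\overline x_1,\ldots,\overline x_{i-1},\overline x'_i,\overline x_{i+1},\ldots,\overline x_k$, compatible with $E$ and with $\overline x'\subseteq\overline x'_i$, whose associated monomial valuation (weight $1/a_j$ on block $j$) again equals $\nu$. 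Throughout I use the standard identification $\gr_\nu(\widehat{\cO}_{X,p})\cong\kappa(p)[\xi_{jl}]$, bigraded by $\nu$-degree ($\deg\xi_{jl}=1/a_j$) and by $m_p$-degree (total $\xi$-degree); thus the $\nu$-degree-$1/a_i$ piece has its $m_p$-degree-$1$ part supported on $\{\xi_{il}\}_l$, its higher $m_p$-degree part built only from $\xi_{jl}$ with $j>i$, and no $m_p$-degree-$1$ term there can be cancelled by the rest.

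\textbf{Step 1: location of the leading forms of $\overline x'$.} The hypothesis $\overline x't^{1/a_i}\in\cA$ means each component $x'$ of $\overline x'$ lies in $\cI_{\nu,1/a_i}$, i.e. $\nu(x')\ge 1/a_i$. Write $x'=\ell(x')+h$ with $\ell(x')$ its leading linear form and $h\in m_p^2$. If $\nu(\ell(x'))>\nu(x')$, then $\inn_\nu(x')$ has $m_p$-degree $\ge 2$, which forces $\ell(x')$ into the span of the coordinates of weight $>1/a_i$, i.e. of $\overline x_1,\ldots,\overline x_{i-1}$ — contradicting that $\overline x_1,\ldots,\overline x_{i-1},x'$ is a regular system of parameters. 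Hence $\nu(\ell(x'))=\nu(x')$, no cancellation occurs at that level, so $\ell(x')$ lies in the span of the coordinates of $\overline x_1,\ldots,\overline x_i$ (those of weight $\ge 1/a_i$), and independence from $\overline x_1,\ldots,\overline x_{i-1}$ then yields that $\ell(x')$ has nonzero component along $\overline x_i$ and that $\nu(x')=1/a_i$. Running over all components of $\overline x'$, the $\overline x_i$-components of the forms $\ell(x')$ are linearly independent; in particular $\#\overline x'\le\#\overline x_i$.

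\textbf{Step 2: rotating the $i$-th block inside the $E$-compatible systems.} If a component $x'$ of $\overline x'$ is divisorial it cuts out an $E$-component through $p$; by Step 1 that component is cut out by one of the original coordinates of $\overline x_i$ (not one of $\overline x_1,\ldots,\overline x_{i-1}$, else $x'$ would be dependent on them; not one of $\overline x_{i+1},\ldots,\overline x_k$, again by Step 1), so $x'$ is a unit times that coordinate. Compatibility of $\overline x_1,\ldots,\overline x_{i-1},\overline x'$ with $E$ (Definition \ref{compa}) means this partial system extends to a full system compatible with $E$; together with Step 1 this forces the leading forms of the \emph{free} components of $\overline x'$ to project to linearly independent elements of the span of the free coordinates of $\overline x_i$. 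Consequently one can choose an injection $\pi$ from the components of $\overline x'$ into the coordinates of $\overline x_i$ sending each divisorial $x'$ to the divisorial coordinate it is a unit multiple of, and each free $x'$ to a free coordinate appearing with nonzero coefficient in $\ell(x')$. Set $\overline x'_i:=\overline x'\cup\{\text{coordinates of }\overline x_i\text{ outside the image of }\pi\}$. The passage $\overline x_i\leadsto\overline x'_i$ (keeping the other blocks) has invertible, block–triangular leading-form matrix, so the new system is a regular system of parameters; by construction every $E$-component through $p$ is still cut out by one of its coordinates, so it is compatible with $E$; and $\overline x'\subseteq\overline x'_i$.

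\textbf{Step 3: the new center equals $\cA$, and the main obstacle.} Let $\cA'$ and $\nu'$ be the center and monomial valuation of the new system. The generators $x_{jl}t^{1/a_j}$ ($j\ne i$), the kept ones $x_{il}t^{1/a_i}$, and the $\overline x'$-generators $x't^{1/a_i}$ (by hypothesis) all lie in $\cA$, so $\cA'\subseteq\cA$, i.e. $\nu\ge\nu'$. For the reverse it is enough to show $\nu'(x_{il})=1/a_i$ for the coordinates $x_{il}$ of $\overline x_i$ in the image of $\pi$, since then $\nu'$ agrees with $\nu$ on all the original coordinates, whence $\nu'(M)\ge\nu(M)$ on every monomial $M$ in them, $\cI_{\nu,a}\subseteq\cI_{\nu',a}$, and $\cA\subseteq\cA'$. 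I would do the $\pi$-replacements one coordinate at a time: for a replacement $x_{il}\leadsto x'$ with $x'=c\,x_{il}+g$, $c$ a unit at $p$, $\nu(g)\ge 1/a_i$, $g$ a power series in the old coordinates, one has $x_{il}=c^{-1}(x'-g)$ afterwards, so $\nu'(x_{il})\ge\min(1/a_i,\nu'(g))$; expanding $g$ into old monomials and separating those containing $x_{il}$, an elementary case analysis — using $\nu(M)\ge 1/a_i$ for each monomial $M$ of $g$, the equalities $\nu'(x_{j'l'})=1/a_{j'}$ for $(j',l')\ne(i,l)$, and the already-known $\nu'(x_{il})\le 1/a_i$ — rules out $\nu'(x_{il})<1/a_i$. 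Iterating over the finitely many replacements gives $\nu'=\nu$ and $\cA'=\cA$. (Alternatively, restricting all data to $V(\overline x_1,\ldots,\overline x_{i-1})$ reduces this to the first-block, non-divisorial case of \cite{ATW-weighted} — divisoriality does not affect the algebra $\cA$, only the decoration $(b_1,\ldots,b_k)$ — and one transfers back.) This last step is the real obstacle: a priori $\nu$ and $\nu'$ need not be comparable in the useful direction on the replaced coordinates and the direct computation of $\nu'(x_{il})$ is circular; performing the replacements one at a time is what makes it self-contained. Steps 1–2 carry the genuinely new SNC content, namely that the rotation absorbing $\overline x'$ can be carried out \emph{within} the systems compatible with $E$.
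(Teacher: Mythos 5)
Your Steps 1--2 mirror the paper's argument: pass to the completion, use the monomial valuation $\nu$ of $\cA$ from Lemma~\ref{comp3}, observe that $\cA_{1/a_i}\subset(\overline{x}_1,\ldots,\overline{x}_i)+m_p^2$ forces the leading forms of $\overline{x}'$ into the $\overline{x}_i$-block modulo $(\overline{x}_1,\ldots,\overline{x}_{i-1})+m_p^2$, and rotate the $i$-th block to absorb $\overline{x}'$; your explicit treatment of $E$-compatibility of the rotation (divisorial components of $\overline{x}'$ are unit multiples of original divisorial coordinates of $\overline{x}_i$, the free components project to independent free directions) is a useful expansion of a point the paper handles in one clause. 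Where you genuinely diverge is Step~3. The paper's own argument is: since $\overline{g}:=\overline{x}'_i-\overline{x}_i$ has all components in $\cA_{1/a_i}\cap(m_p^2+(\overline{x}_1,\ldots,\overline{x}_{i-1}))$, the substitution $\overline{x}_i\mapsto\overline{x}_i+\overline{g}$ (identity on the other coordinates) is an automorphism $\phi$ of $\widehat{\cO}_{X,p}$ which sends each generator $\overline{x}_jt^{1/a_j}$ of $\cA$ into $\cA$, so $\phi(\cA)\subseteq\cA$; noetherianity of $\widehat{\cO}_{X,p}$, applied gradation by gradation to the ascending chain $\cI_{\nu,a}\subseteq\phi^{-1}(\cI_{\nu,a})\subseteq\phi^{-2}(\cI_{\nu,a})\subseteq\cdots$, then forces $\phi(\cA)=\cA$, which is exactly the ``onto'' the paper asserts. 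You instead propose a one-coordinate-at-a-time valuation computation, and you correctly flag that a direct verification of $\nu'\geq\nu$ on the replaced coordinates is circular; however, the case analysis you sketch does not close the loop as stated. If $\nu'(x_{il})=c<1/a_i$, a monomial $M=x_{il}N$ of $g$ with $N$ of $\nu$-weight $0$ (e.g.\ $N$ a coordinate outside the center) satisfies $\nu(M)=1/a_i$ (so is allowed in $g$) but $\nu'(M)=c$, hence your estimate only yields $\nu'(g)\geq c$, not the strict inequality needed to contradict $\nu'(x_{il})=c$. This can be repaired by iterating the substitution $x_{il}=c^{-1}(x'-g)$ and showing $\nu'\geq 1/a_i$ on each approximant, but at that point the automorphism-plus-noetherianity route is both shorter and more robust, and I would recommend it for Step~3.
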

	\begin{proof}
By Lemma~\ref{comp3} and the assumption \( a_1 \leq \cdots \leq a_k \), we have
\[
\mathcal{A}_{1/a_1} \subset (\overline{x}_1) + m_p^2.
\]Thus upon the coordinate change of $\overline{x}_1$ adapted  to $E$ in image in $m_p/m_p^2$, of the set of coordinates $\overline{x}$ is a subset of the image of coordinates  in $\overline{x}_1$. So one can extend $\overline{x}$, and assume that $\overline{x}$ and $\overline{x}_1$ define  the same images in  $m_p/m_p^2$.
Then  in the completion $\widehat{\cO_{X,p}}$ we can write
equality of the vectors of the coordinates
$$\overline{x}'_1=\overline{x}_1+\overline{g},\quad  \overline{x}'_j=\overline{x}_j,\quad j\geq 2, $$ 
where the coordinates of vector $\overline{g}$ are in $\cA_{a_1}\cap \,\,m_p^2$.
This determines an automorphism of $\widehat{\cO_{X,p}}$  which takes $\widehat{\cO_{X,p}}\cdot\cA$ into $\widehat{\cO_{X,p}}\cdot \cA$, and determines the desired coordinate change.
\end{proof}

\begin{corollary} \cite{ATW-weighted}  Assume that a  center $\cA$ has two different presentations at a point $p\in X$:
$$\cA=\cO_X[\overline{x}_1 t^{1/a_1},\ldots\overline{x}_kt^{1/a_k}]^{\inte}=\cO_X[\overline{x}'_1 t^{1/a'_1},\ldots\overline{x}'_{k'}t^{1/a_{k'}}]^{\inte} $$ 
then the associated invariants $(\overline{b}_1,\ldots,\overline{b}_k)=(\overline{b}'_1,\ldots,\overline{b}'_{k'})$ are the same.
	\end{corollary}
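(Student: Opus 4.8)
The plan is to prove, by induction on the number $k$ of blocks of a presentation, that the tuple $(\overline b_1,\dots,\overline b_k)$ attached to a center $\cA$ compatible with $E$ at $p$ — which records the weights $1/a_i$, the block sizes $k_i=|\overline x_i|$, and the free/divisorial labels — depends only on $\cA$ and $E$ near $p$. The case $k=0$ (the trivial center $\cA=\cO_X$) is vacuous. Everything is local at $p\in V(\cA)$, so I pass to $\Spec\cO_{X,p}$ (or its completion); there $V(\cA)$ and the subschemes $V(\overline x'_1)$ appearing below are cut out by parts of a regular system of parameters, hence irreducible, so the monomial descriptions of Lemmas \ref{comp2} and \ref{comp3} and the associated monomial valuation apply. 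For the inductive step assume $k\ge 1$, so both presentations have $a_1$ and $a'_1$. By Lemma \ref{order} applied to $R=\cA$ one gets $a_1=\ord_p(\cA)=a'_1$; write $a:=a_1$. By Lemmas \ref{comp2}/\ref{comp3} the ideal $\cA_{1/a}\subseteq m_p$ is generated by the monomials in the coordinates of weighted degree $\ge 1/a$, and since $1/a$ is the largest weight occurring, the image $L$ of $\cA_{1/a}$ in $m_p/m_p^2$ is spanned exactly by the classes of the first block $x_{11},\dots,x_{1k_1}$. Hence $k_1=\dim_k L$ is intrinsic, so $k_1=k'_1$.

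Next I show that the number of divisorial entries of $\overline b_1$ is intrinsic. Let $L_E\subseteq m_p/m_p^2$ be spanned by the classes $dE_i$ of local equations of the components of $E$ through $p$; it depends only on $E$. In a compatible presentation the divisorial coordinates form part of a basis of $m_p/m_p^2$ and each $dE_i$ is a unit multiple of one of them, so a direct computation in the basis $\{dx_j\}$ shows that $L\cap L_E$ is spanned precisely by the classes of those divisorial coordinates that lie in the first block. Therefore the number of divisorial entries of $\overline b_1$ equals $\dim_k(L\cap L_E)$, which is intrinsic; combined with $k_1=k'_1$ this forces $\overline b_1=\overline b'_1$, both being the sorted tuple with $k_1-\dim_k(L\cap L_E)$ copies of $a$ followed by $\dim_k(L\cap L_E)$ copies of $a_+$.

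Now I synchronize the first blocks and descend. Each component of $\overline x'_1$ lies in $\cA_{1/a}$ (as $a'_1=a$ and $x'_{1j}t^{1/a'_1}\in\cA$) and $\overline x'_1$ is part of a compatible system of parameters, so the Replacement Lemma (Lemma \ref{val} with $i=1$, $\overline x:=\overline x'_1$) yields a presentation $\cA=\cO_X[\overline x'_1t^{1/a},\overline x_2t^{1/a_2},\dots,\overline x_kt^{1/a_k}]^{\inte}$ compatible with $E$ whose first block contains $\overline x'_1$; by the first paragraph that block has $k_1=|\overline x'_1|$ elements, hence equals $\overline x'_1$. Restrict this presentation and the original one of $\cA$ to $Z:=V(\overline x'_1)$. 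Using the monomial description of $\cA_a$ one checks that $\cA_a\cdot\cO_Z$ is the degree-$a$ piece of the center $\cB:=\cO_Z[\overline x_2|_Zt^{1/a_2},\dots,\overline x_k|_Zt^{1/a_k}]^{\inte}=\cO_Z[\overline x'_2|_Zt^{1/a'_2},\dots]^{\inte}$ on the regular scheme $Z$, equipped with the SNC divisor $E_Z$ coming from the components of $E$ not containing $Z$; moreover a coordinate $x_{ij}$ with $i\ge 2$ is divisorial for $(Z,E_Z)$ iff it is divisorial for $(X,E)$, so these two presentations of $\cB$ carry the invariants $(\overline b_2,\dots,\overline b_k)$ and $(\overline b'_2,\dots,\overline b'_{k'})$. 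Since $\cB$ admits a presentation with $k-1$ blocks, the inductive hypothesis gives $(\overline b_2,\dots,\overline b_k)=(\overline b'_2,\dots,\overline b'_{k'})$; together with $\overline b_1=\overline b'_1$ this proves $(\overline b_1,\dots,\overline b_k)=(\overline b'_1,\dots,\overline b'_{k'})$, and in particular $k=k'$.

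The main obstacle is the descent step: one must make sure that after invoking the Replacement Lemma the two presentations literally share their first block as the same tuple of functions, and that restriction to $V(\overline x'_1)$ commutes with forming integral closures of Rees centers and preserves the free/divisorial dichotomy, so that $\cB$ is genuinely a presentation-independent center on $(Z,E_Z)$ — only then does the inductive hypothesis apply. The remaining ingredients (the order identity, the linear-algebra description of $L\cap L_E$, and the monomial bookkeeping for the restriction) are routine consequences of Lemmas \ref{order}, \ref{comp2}, \ref{comp3} and \ref{val}.
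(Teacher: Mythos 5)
Your proof is correct and follows essentially the same route as the paper's: Lemma \ref{order} gives $a_1=a'_1$, the Replacement Lemma \ref{val} synchronizes the first blocks, and one concludes by restriction to $V(\overline{x}_1)$ and induction on the number of blocks. The extra material you supply (the dimension count of $L$ in $m_p/m_p^2$ and of $L\cap L_E$ to pin down $k_1$ and the divisorial count, and the check that restriction to $Z$ produces a well-defined center on $(Z,E_Z)$ respecting the free/divisorial dichotomy) is exactly the content the paper leaves implicit behind ``we can assume that $\overline{x}_1=\overline{x}'_1$''.
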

\begin{proof}  By Lemma \ref{order}, 
$a_1=a'_1=\ord_p(\cA)$, which can be verifed for generators.
By Lemma \ref{val}, applied to both  presentations, we can assume that $\overline{x_1}=\overline{x'_1}$.
Restricting  both algebras to $V(\overline{x}_1)=V(\overline{x'}_1)$ we get the equality by the inductive  assumption.
	\end{proof}
\subsubsection{Filtered presentation of the center} \label{filter}
Given a sequence of coordinates adapted to $E$,
\[
\overline{x}_1 \in \mathcal{O}_X, \quad \overline{x}_i \in \mathcal{O}_X / (\overline{x}_1, \ldots, \overline{x}_{i-1})=\cO_{H_{i-1}} \quad \text{for } i = 2, \ldots, k,
\]
which determines a filtration of smooth subvarieties
\[
X \supset H_1 := V(\overline{x}_1) \supset H_2 := V(\overline{x}_1, \overline{x}_2) \supset \cdots \supset H_{k-1} := V(\overline{x}_1, \ldots, \overline{x}_{k-1}),
\]
adapted to $E$, and  a sequence of rational numbers $a_1 > a_2 > \cdots > a_k > 0$, one can associate a center in {\it filtered form} as
\[
\mathcal{A} = \mathcal{O}_X[\overline{x}_1 t^{1/a_1}, \ldots, \overline{x}_k t^{1/a_k}]^{\mathrm{int}}.
\]

Taking representatives $\overline{y}_i\subset \cO_X$ one can write the center as
\[
\mathcal{A} = \mathcal{O}_X[\overline{y}_1 t^{1/a_1}, \ldots, \overline{y}_k t^{1/a_k}]^{\mathrm{int}},
\]
where $\overline{x}_1 = \overline{y}_1$ and $\overline{x}_i = \overline{y}_i \mod (\overline{y}_1, \ldots, \overline{y}_{i-1})$ for $i > 1$. 
 For any two choices of representatives \( \overline{y}_i \) and \( \overline{y}_i' \), the difference
$
\overline{y}_i t^{1/a_i} - \overline{y}_i' t^{1/a_i}
$
belongs, by induction, to the subalgebra 
$$
\mathcal{O}_X[\overline{y}_1 t^{1/a_1}, \ldots, \overline{y}_{i-1} t^{1/a_{i-1}}]^{\mathrm{int}}=\mathcal{O}_X[\overline{y}'_1 t^{1/a_1}, \ldots, \overline{y}'_{i-1} t^{1/a_{i-1}}]^{\mathrm{int}},
$$
since \( a_1 \leq \cdots \leq a_k \).  
This shows that \( \mathcal{A} \) is independent of the particular choice of representatives \( \overline{y}_i \).
\section{Resolution Algorithm via Cobordant Blow-ups}

\subsection{Coefficient ideal of Rees algebra}
\subsubsection{Differential operators preserving centers}
A  consequence of the condition $a_1\leq\ldots\leq a_n$ is that the center $A$ is preserved by the action of $D_{x_it^{1/a_1}}=\partial_{x_i} t^{1/a_1}$ for $i=1,\ldots,k$.
\begin{lemma} \label{obvious2} Let $A=\cO_X[{x}_1t^{1/a_1},\ldots,{x_k}t^{1/a_k}]^\inte$, and $\overline{x}=({x}_1,\ldots,{x}_n)$, where $n\geq k$, be a complete coordinate system on $X$ extending $({x}_1,\ldots,{x}_k)$.
If $ft^a\in A_a t^a$, and $|\alpha|/a_1< a$ then
$$D_{\overline{x}^{\alpha} t^{-|\alpha|/a_1}}(ft^a):=\frac{\partial^{|\alpha|}}{\partial x_1^{\alpha_1}\ldots \partial x_n^{\alpha_n}}ft^{a-|\alpha|} \in A_{a-(|\alpha|/a_1)} t^{a-(|\alpha|/a_1)}, $$	

\end{lemma}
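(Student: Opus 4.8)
The plan is to reduce to a one–step estimate for a single partial derivative and then induct on $|\alpha|$.

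Since the assertion may be checked on stalks, I would first replace $X$ by the regular local ring $\cO_{X,p}$ at a point $p\in V(\cA)$; this is a domain, so $V(\cA)=V(x_1,\ldots,x_k)$ is irreducible and Lemma \ref{comp3} applies: the center carries an associated monomial valuation $\nu_A$ with $\nu_A(x_i)=1/a_i$ for $i\le k$ and $\nu_A(x_j)=0$ for $j>k$, and $\cA_c=\{g\in\cO_X\mid \nu_A(g)\ge c\}$ for all $c\ge 0$. By the definition of a monomial valuation, $\cA_c$ is generated as an ideal of $\cO_X$ by the monomials $x^{\beta}=x_1^{\beta_1}\cdots x_n^{\beta_n}$ with $\langle\beta,w\rangle:=\sum_j\beta_j\,\nu_A(x_j)\ge c$. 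Setting $w_i:=\nu_A(x_i)$, the hypothesis $a_1\le\cdots\le a_k$ enters exactly through $0\le w_i\le 1/a_1$ for every $i$. I will also use that $\cA$ is integrally closed, which (see the remark after the definition of a rational Rees algebra) makes the graded pieces decreasing: $\cA_c\subseteq\cA_{c'}$ whenever $c\ge c'\ge 0$.

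The main step is: $\partial_{x_i}(\cA_c)\subseteq\cA_{c-1/a_1}$ for every $i\in\{1,\ldots,n\}$ and every $c\ge 1/a_1$. To prove it, write $f\in\cA_c$ as a finite sum $\sum_l g_l x^{\beta_l}$ with $g_l\in\cO_X$ and $\langle\beta_l,w\rangle\ge c$, apply the Leibniz rule, and treat the two kinds of summands: $\partial_{x_i}(g_l)\,x^{\beta_l}\in\cO_X\cdot\cA_c=\cA_c$, and, for $\beta_{l,i}\ge1$, $g_l\,\beta_{l,i}\,x^{\beta_l-e_i}\in\cA_{c-w_i}$ because $\langle\beta_l-e_i,w\rangle\ge c-w_i$. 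Hence $\partial_{x_i}(f)\in\cA_{c-w_i}\subseteq\cA_{c-1/a_1}$, the last inclusion from $w_i\le 1/a_1$ together with integral closedness.

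Finally I induct on $|\alpha|$ to show that $ft^a\in\cA_at^a$ and $|\alpha|/a_1<a$ imply $D_{\overline{x}^{\alpha}}(f)\in\cA_{a-|\alpha|/a_1}$; since $D^{\alpha}_{\overline{x}t^{1/a_1}}(ft^a)=D_{\overline{x}^{\alpha}}(f)\,t^{a-|\alpha|/a_1}$, this is the claim. The case $|\alpha|=0$ is the hypothesis. For $|\alpha|\ge1$ pick $i$ with $\alpha_i\ge1$; in characteristic zero $D_{\overline{x}^{\alpha}}=\tfrac{1}{\alpha_i}\,\partial_{x_i}\circ D_{\overline{x}^{\alpha-e_i}}$ with $\tfrac{1}{\alpha_i}$ a unit, so by the inductive hypothesis (applicable since $(|\alpha|-1)/a_1<a$) we get $D_{\overline{x}^{\alpha-e_i}}(f)\in\cA_{a-(|\alpha|-1)/a_1}$, and then the one–step estimate yields $\partial_{x_i}D_{\overline{x}^{\alpha-e_i}}(f)\in\cA_{a-|\alpha|/a_1}$. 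Here $a-(|\alpha|-1)/a_1=(a-|\alpha|/a_1)+1/a_1>1/a_1$, so the estimate is applied in its valid range, and the target index $a-|\alpha|/a_1$ is positive by hypothesis. I do not expect a real obstacle: the only delicate point is the bookkeeping of gradations — checking that throughout the iterated differentiation the running index stays $\ge 1/a_1$, which is precisely what $|\alpha|/a_1<a$ ensures.
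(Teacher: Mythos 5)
Your proof is correct and takes essentially the same approach as the paper: reduce via the monomial valuation description (Lemma \ref{comp3}) to a single-derivative estimate on monomial generators, noting that the worst case drop is $1/a_1$, then conclude. You are somewhat more explicit than the paper's terse argument --- in particular you spell out the Leibniz decomposition and the case of derivatives $\partial_{x_j}$ with $j>k$ (which do not lower the valuation), both of which the paper leaves implicit by only checking $D_{x_it^{1/a_1}}$ for $i\le k$ on monomials in $x_1,\ldots,x_k$.
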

 \begin{proof}
 
 The property can be verified on monomials $(x_1^{b_1}\cdot\ldots\cdot x_k^{b_k})t^a\in A_{a}t^a$ and on the differential operators $D_{x_it^{1/a_1}}$ for $i=1,\ldots,k$.
 If $x_1^{b_1}\cdot\ldots\cdot x_k^{b_k}\in A_{a}$ then, by Lemma \ref{comp2-short}, 
 $b_1/a_1+\ldots b_k/a_k\geq a$. 
 So $b_1/a_1+\ldots +(b_i-1)/a_i+\ldots +b_k/a_k\geq a-(1/a_1)$ and again by Lemma \ref{comp2-short}, 
 $$D_{x_it^{1/a_1}}(x_1^{b_1}\cdot\ldots\cdot x_k^{b_k} \cdot t^a)\sim (x_1^{b_1}\cdot\ldots\cdot x_i^{b_i-1}\cdot \ldots  \cdot x_k^{b_k} \cdot t^{a-(1/a_1)})\in A_{a-(1/a_1)}$$
 	
 \end{proof}

\subsubsection{Splitting of Derivations and Compatibility}




\begin{lemma}
Let $X$ be smooth over a field $K$ with coordinates $x_1,\ldots,x_n$ at $p \in X$. Let  $H = V(x_1,\ldots,x_k)$ be a smooth subvariety   in  $X$ with the inclusion $$i_D: \cO_H\hookrightarrow \widehat{\cO}_{X,H}=\lim \cO_X/\cI^i_H \simeq \cO_H[[x_1,\ldots,x_k]],$$ where $D:=\{D_1=\frac{\partial}{\partial x_1},\ldots,D_k=\frac{\partial}{\partial x_k}\}$  is a set of derivations
 which vanish on $$i_D(\cO_H)=: \widehat{\cO}_{X,H}^D=\{f\in \widehat{\cO}_{X,H} \quad D_i(F)\equiv 0, \quad D_i\in D \}\subset \widehat{\cO}_{X,H}\}.$$ 
 
 The  inclusion $i_D$ admits the natural right inverse $$\widehat{\cO}_{X,H}\to \cO_H, \quad \quad f\mapsto f_{|H}\in \widehat{\cO}_{X,H}/\cI_H=\cO_H.$$
Thus $f\to f_{|H}$ and $i_D$ determine  the identification of $\widehat{\cO}_{X,H}^D$ with $\cO_H$.
\end{lemma}

\begin{lemma} \label{spli2}Given a partial coordinate system \( \overline{x}_1 \) , consider an extension \( (\overline{x}_1, \overline{y}) \) and a set of derivations \( D^1 = \{D_1, \ldots, D_k\} \), where \( D_i = \frac{\partial}{\partial x_i} \) for \( x_i \in \overline{x}_1 \). Then for any center with presentation
\[
\mathcal{A} = \widehat{\mathcal{O}}_{X,H}\left[\overline{x}_1 t^{1/a_1},\ \ldots,\ \overline{x}_k t^{1/a_k}\right]^{\mathrm{int}},
\]
we have the compatibility:
\[
\widehat{\mathcal{O}}_{X,H} \cdot \mathcal{A} 
= \widehat{\mathcal{O}}_{X,H}\left[\overline{x}_1 t^{1/a_1},\ i_{D^1}(\overline{x}_{2|H}) t^{1/a_2},\ \ldots,\ i_{D^1}(\overline{x}_{k|H}) t^{1/a_k}\right]^{\mathrm{int}}.
\]

Equivalently, this can be written as:
\[
\widehat{\mathcal{O}}_{X,H} \cdot \mathcal{A} 
= \widehat{\mathcal{O}}_{X,H}\left[\overline{x}_1 t^{1/a_1},\ i_{D^1}(\mathcal{A}_{|H})\right]^\inte.
\]

\end{lemma}

\begin{proof}
Let $\overline{x}_i' := i_D(\overline{x}_{i|H_1})$. Then $\overline{x}_i' - \overline{x}_i \in (\overline{x}_1)$, so $\overline{x}_i' t^{1/a_i} - \overline{x}_i t^{1/a_i} \in (\overline{x}_1 t^{1/a_i}) \subset \cA$. Thus, replacing $\overline{x}_i$ with $\overline{x}_i'$ gives an equivalent presentation.
\end{proof}
This   can be translated into  the filtered form:
\begin{lemma} $\cA=\cO_X[\overline{x}_1t^{1/a_1},\cA_{|V(\overline{x}_1)}]$ \qed
\end{lemma}
\begin{proof} Follows from the fact that $$\widehat{\cO}_{X,H}\cdot \cA=\widehat{\cO}_{X,H}[\overline{x}_1 t^{1/a_1},i_{D^1} (\cA_{|H})]=\widehat{\cO}_{X,H}[\overline{x}_1 t^{1/a_1},\cA_{|H}].$$
\end{proof}

\subsubsection{Coefficient ideal of Rees algebra} The origin of the coefficient ideals concept can be traced back to the work of Abhyankhar and Hironaka, as seen in \cite{Hironaka}. Various definitions of this notion have been explored in multiple studies, including \cite{Villamayor}, \cite{Bierstone-Milman-simple}, \cite{Bierstone-Milman}, \cite{Wlodarczyk}, and \cite{Kollar}, among others. The approach adopted in this work is closely aligned with the definitions presented in \cite{Bierstone-Milman-simple} and \cite{Bierstone-Milman} within the framework of resolution by smooth centers, but it does not require factorization by monomials associated with the exceptional divisors.

Let $R=\cO_X[f_jt^{b_j}]_{j=1,\ldots,s}$ be a Rees algebra  generated by $f_jt^{b_j}$  for $j=1,\ldots,s$ , $a_1=\ord_p(R)$ and let $\overline{x}$  be any partial system of local coordinates adapted  to $E$ at $p$, and  set $H:=V(\overline{x})$ and 

For any element  $f_jt^{b_j} \in R_{b_j}$, one can write $f_j$ in $\widehat{\cO}_{X,H}$  as $$f_j\equiv \sum c_{j\alpha} \overline{x}^\alpha =\sum_{  |\alpha|<  b_j a_1} c_{j\alpha} \overline{x}^\alpha + \sum_{  |\alpha|\geq  b_j a_1} c_{j\alpha} \overline{x}^\alpha \in \widehat{\cO}_{X,H}$$ 
 where $ c_{j\alpha}\in i_{D^1}({\cO}_{H})=\widehat{\cO}_{X,H}^{D^1} \subset   \widehat{\cO}_{X,H}$ and $c_{j\alpha|H}\in \cO_H$ for $H:=V(\overline{x})$.
 .

\begin{definition}
The \emph{coefficient ideal} of the Rees algebra 
\[
R = \mathcal{O}_X[f_j t^{b_j}]
\]
with respect to the element \(\overline{x} t^{1/a_1}\) and the set $D$ of derivations $D_i=\frac{\partial}{\partial x_i}$, $x_i\in \overline{x}$ in a certain system of coordinates extending $\overline{x}$, is the Rees algebra on the hypersurface \(H := V(\overline{x})\) defined by:
\[
C_{\overline{x} t^{1/a_1}}(R) := \mathcal{O}_H\left[c_{j\alpha|H} \, t^{b_j - |\alpha|/a_1} \,\big|\, |\alpha| < b_j a_1 \right],
\]
where each generator \(f_j t^{b_j} \in R\) admits the expansion:
\[
f_j t^{b_j} = \left( \sum_{\alpha} c_{j\alpha} \, \overline{x}^\alpha \right) t^{b_j} 
= \sum_{|\alpha| < a_1 b_j} c_{j\alpha} \, t^{b_j - |\alpha|/a_1} \cdot \overline{x}^\alpha t^{|\alpha|/a_1} \mod (\overline{x}t^{1/a_1})^{a_1 b_j}
,\] where $c_{j\alpha} \in i_{D^1}(\cO_H)$.
\end{definition}

\subsubsection{Coefficient ideals and admissibility}
\begin{lemma} \label{Coef1}  Let $R=\cO_X[f_jt^{b_j}]$ be the Rees algebra on $X$. The following conditions are equivalent for $(\overline{x}_1,D)$ in a neighborhood of  $p\in V(\cA)\subset H:=V(\overline{x}_1)\subset X$.
\begin{enumerate}
\item $R\subseteq \cA^\Int=\cO_X[\overline{x}_1t^{1/a_1},\ldots,\overline{x_k}t^{1/a_k}]^{\Int}$.
\item $\cC_{\overline{x}_1t^{1/a_1}}((R)\subseteq \cA_{|H}^\Int=\cO_H[\overline{x}_{2|H}t^{1/a_2},\ldots,\overline{x}_{k|H}t^{1/a_k}]^\Int$.
 \end{enumerate}
\end{lemma}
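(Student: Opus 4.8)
The three conditions concern, respectively, the Rees algebra $R$, its coefficient ideal $\cC_{\overline{x}_1t^{1/a_1}}(R)$, and the restriction of that coefficient ideal to $H=V(\overline{x}_1)$; all inclusions are into (restrictions of) the fixed center $\cA$. The plan is to prove the cycle $(1)\Rightarrow(2)\Rightarrow(3)\Rightarrow(1)$, working in a neighbourhood of $p$ where, after passing to $\widehat{\cO}_{X,p}$ (or an \'etale neighbourhood), the pair $(\overline{x}_1,D_{\overline{x}_1})$ splits (Corollary \ref{split44}); by Lemma \ref{spli2} this splitting is compatible with $\cA$, and by the Replacement Lemma \ref{val} the presentation of $\cA$ may be chosen so that $D_{\overline{x}_1}(\overline{x}_i)=0$ for $i\geq 2$. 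Since all the algebras involved are finitely generated, it suffices to verify the inclusions on generators, and since integral closure and restriction to a neighbourhood commute with the statement, faithful flatness of $\cO_{X,p}\to\widehat{\cO}_{X,p}$ lets us reduce to the complete local ring throughout.

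\textbf{$(1)\Rightarrow(2)$.} Assume $R\subseteq\cA^{\Int}=\cO_X[\overline{x}_1t^{1/a_1},\dots,\overline{x}_kt^{1/a_k}]^{\Int}$. First, $(\overline{x}_1t^{1/a_1})^{\Int}\subseteq\cA$ trivially, since $\overline{x}_1t^{1/a_1}$ is among the generators of $\cA$. For the remaining generators $D^{\alpha}_{\overline{x}_1t^{1/a_1}}(f_jt^{b_j})$ with $|\alpha|<b_ja_1$: each $f_jt^{b_j}\in R\subseteq\cA^{\Int}$, and by Lemma \ref{obvious2} (applied with first block $\overline{x}_1$, using $a_1\le a_2\le\dots$) the operators $D^{\alpha}_{\overline{x}_1t^{1/a_1}}$ preserve $\cA^{\Int}$, lowering the $t$-degree by $|\alpha|/a_1$; the hypothesis $|\alpha|<b_ja_1$ guarantees $b_j-|\alpha|/a_1>0$, so the resulting element lies in a positive-degree piece of $\cA^{\Int}$. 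Hence all generators of $\cC_{\overline{x}_1t^{1/a_1}}(R)$ lie in $\cA^{\Int}=\cA$ (the two integral closures agreeing inside the relevant $\cO_X[t^{1/w}]$), giving $(2)$.

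\textbf{$(2)\Rightarrow(3)$ and $(3)\Rightarrow(1)$.} For $(2)\Rightarrow(3)$, restrict the inclusion $\cC_{\overline{x}_1t^{1/a_1}}(R)\subseteq\cA$ to $H=V(\overline{x}_1)$: by the split form (Lemma \ref{split}), $\cC_{\overline{x}_1t^{1/a_1}}(R)_{|H}=\cO_H[\,c_{j\alpha}t^{b_j-|\alpha|/a_1}\mid |\alpha|<b_ja_1\,]$, and, since $D_{\overline{x}_1}(\overline{x}_i)=0$ for $i\geq 2$, restricting the chosen presentation of $\cA$ to $H$ gives $\cA_{|H}=\cO_H[\overline{x}_{2|H}t^{1/a_2},\dots,\overline{x}_{k|H}t^{1/a_k}]^{\Int}$; the inclusion $\subseteq$ is preserved under $\cO_X\onto\cO_H$, yielding $(3)$. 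For $(3)\Rightarrow(1)$, it suffices to show $R\subseteq\cC_{\overline{x}_1t^{1/a_1}}(R)\subseteq\cA^{\Int}$. The first inclusion is immediate from the definition (each $f_jt^{b_j}$ appears among the generators, taking $\alpha=0$ when $b_ja_1>0$, and otherwise is subsumed when $R$ has no positive gradations). For the second: by the split form, $\cC_{\overline{x}_1t^{1/a_1}}(R)=\cO_X[(\overline{x}_1t^{1/a_1})^{\Int},\,\cC_{\overline{x}_1t^{1/a_1}}(R)_{|H}]$; the first part lies in $\cA^{\Int}$ since $\overline{x}_1t^{1/a_1}$ generates a sub-block of $\cA$, and the second part lies in $\cA^{\Int}$ because, by $(3)$, $\cC_{\overline{x}_1t^{1/a_1}}(R)_{|H}\subseteq\cA_{|H}=\cO_H[\overline{x}_{2|H}t^{1/a_2},\dots]^{\Int}$, and each generator $\overline{x}_{i|H}t^{1/a_i}$ differs from $\overline{x}_it^{1/a_i}\in\cA$ by an element of $(\overline{x}_1)t^{1/a_i}\subseteq\cA^{\Int}$ (using $a_i>a_1$ as in the proof of Lemma \ref{spli2}). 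Combining, $\cC_{\overline{x}_1t^{1/a_1}}(R)\subseteq\cA^{\Int}$, whence $(1)$.

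\textbf{Main obstacle.} The delicate point is bookkeeping of the integral closures: the coefficient ideal is defined with $(\overline{x}_1t^{1/a_1})^{\Int}$, the closure taken in the smallest $\cO_X[t^{1/w}]$ containing \emph{all} generators of $\cC_{\overline{x}_1t^{1/a_1}}(R)$, and one must check this $w$ is compatible with the $w$ implicit in $\cA^{\Int}$ (i.e. the denominators $a_i$ versus $b_j$, $a_1$); the Remark after the resolution-invariant definition noting $w_{R,A}=w_A$ in the resolution process handles this, but in full generality one should state that the equivalence is independent of the choice of sufficiently divisible $w$, as already remarked. The second subtlety is that the splitting — hence the clean description $\cC_{\overline{x}_1t^{1/a_1}}(R)=\cO_X[(\overline{x}_1t^{1/a_1})^{\Int},\cC(R)_{|H}]$ — is only available after completion; one passes to $\widehat{\cO}_{X,p}$ to prove the equivalence, then descends by faithful flatness, which is legitimate since all three conditions are inclusions of coherent sheaves checkable on $\widehat{\cO}_{X,p}$.
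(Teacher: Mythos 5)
Your proof is correct and follows essentially the same route as the paper's: pass to $\widehat{\cO}_{X,p}$ so that $D_{\overline{x}_1}$ splits and is compatible with $\cA$, derive $(1)\Rightarrow(2)$ from Lemma \ref{obvious2} (the graded operators preserve $\cA^{\Int}$), get $(2)\Rightarrow(3)$ by restriction to $H$, and close the cycle $(3)\Rightarrow(1)$ via the split form from Lemma \ref{split} together with $R\subseteq\cC_{\overline{x}_1t^{1/a_1}}(R)$. Your extra bookkeeping about the $\Int$ closures and the use of Lemma \ref{spli2} to compare $\overline{x}_{i|H}$ with $\overline{x}_i$ only spells out details the paper leaves implicit.
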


\begin{proof} We can pass to the completion $\widehat{\cO}_{X,p}$, and replace $\overline{x}_i$ with $\overline{x}'_i :=i_{D^1}(\overline{x}_{i|H})$, so that $D^1(\overline{x}'_i)=0$ for $i>1$. 

$(1)\Rightarrow (2)$ by Lemma \ref{obvious2}, the  operator $D_{\overline{x}_1t^{-1/a_1}}=\frac{\partial}{\partial \overline{x}_1}t^{-1/a_1}$ preserves the center $\cA$ so if $f_jt^{b_j}\in \cA$ then $$\frac{1}{\alpha!}D_{\overline{x}^\alpha t^{-|\alpha|/a_1}}(f_j t^{b_j})_{|H}=c_{j\alpha|H}t^{b_j-|\alpha|/a_1}\in \cA_{|H}.$$

 $(2)\Rightarrow (1)$ 
  Conversely suppose $c_{j\alpha{|H}}t^{b_j-|\alpha|/a_1}\in \cA_{|H})$. Note that  $i_{D^1}(\cA_{|H})\subset \widehat{\cO}_{X,p}\cdot \cA$  since $i_{D^1}(\overline{x}_{i|H})t^{1/a_i}\subset \widehat{\cO}_{X,p}\cdot \cA$. Thus  $$c_{j\alpha}t^{b_j-|\alpha|/a_1}=i_{D^1}(c_{j\alpha{|H}}t^{b_j-|\alpha|/a_1})\in i_{D^1}(\cA_{|H})\subset \widehat{\cO}_{X,p}\cdot \cA$$
  
  Then since  $\overline{x}_1t^{1/a_1}$  and  $c_{j\alpha}t^{b_j-|\alpha|/a_1} \in \widehat{\cO}_{X,p}\cdot\cA$ each $f_jt^{b_j}\in  \widehat{\cO}_{X,p}\cdot \cA$.
 \end{proof}



\subsection{Maximal contact}

\subsubsection{Maximal contact of Rees algebra} \label{maxims}

\begin{definition}\label{cota}
Let $R = \cO_X[f_j t^{b_j}]_{j=1,\ldots,s}$ be a Rees algebra on an open affine $U \subset X$, and let $a_1 > 0$ be rational. The \emph{cotangent ideal} of $R$ with respect to $a_1$ is
\[
T^{1/a_1}(R) := \sum_{|\alpha| = b_j a_1 - 1} \cO_X D_{\overline{x}^\alpha}(f_j).
\]
In graded form:
\[
T^{1/a_1}(R)t^{1/a_1} := \sum_{|\alpha| = b_j a_1 - 1} D_{\overline{x}^\alpha}(f_j) t^{1/a_1}.
\]
\end{definition}
From Lemma \ref{obvious2} we obtain that the cotangent ideal consists of  elements of  $t^{1/a_1}$-gradations $\cA_{1/a_1}$ of  $R$ admissible centers $\cA$ of order $a_1$.
\begin{lemma}  \label{easy} If $R\subset \cA=\cO_X[\overline{x}_1t^{1/a_1},\ldots,\overline{x_k}t^{1/a_k}]^{\Int}$ then $T^{1/a_1}(R)\subset \cA_{1/a_1}$
\qed
\end{lemma}

Since  derivations commute with initial forms we obtain

\begin{lemma}
If $R$ is generated by $f_j t^{b_j}$, then $\gr_p(R)$ is generated by $\inn_p(f_j)t^{b_j}$ and
\[
\inn_p(T^{1/a_1}(R)) = T^{1/a_1}(\gr_p(R)). \qed
\]
\end{lemma}
\begin{lemma}
\[
\inv_p^1(T^{1/a_1}(R)t^{1/a_1}) = \inv_p(\inn_p(T^{1/a_1}(R))t^{1/a_1}) = \inv_p(\gr_p(R)) = \inv_p^1(R).
\]
\end{lemma}
\begin{proof}
Choose coordinates $x_1, \ldots, x_k \in T^{1/a_1}(R)$ such that $\inn_p(x_i)$ span $\inn_p(T^{1/a_1}(R))$. Then $\gr_p(R)\subseteq K_p[\inn_p(x_i)t^{1/a_1}]$, hence the result via Lemmas \ref{easy} and  \ref{g3}.
\end{proof}

The linear part of the cotangent ideal is uniquely determined, as is the linear part of the $t^{1/a_1}$-gradation of  a maximal admissible center:
\begin{lemma} \label{max}
Let $E_p$ denote the set of divisor components through $p \in X$. Consider the induced logarithmic structure $\overline E_p := \inn(E_p)$ on $\Spec(\gr_p(\mathcal O_{X,p}))$ and  the graded algebra $\gr_p(R)$ associated with $R$ of order  $a_1$ at $p$. Then:
\begin{enumerate}
	\item	$\inv_p(\gr_p(R)) = a_1\inv(\overline{x}_1)$ is uniquely determined by the minimal subspace $\overline{x}_1 \subset m_p / m_p^2$ adapted to $\overline E_p$ and containing $\inn_p(T^{1/a_1}(R))$, where $\overline{\cA}: =K_p[\overline{x}_1t^{1/a_1}]$ is a maximal admissible center for $\gr_p(R)$.
	\item The divisorial part $V_p \subset \overline{x}_1$ is the smallest subspace of $\spa(\overline E_p)$ generated by the divisorial coordinates $\overline y_i \in \overline E_p$ and containing $\inn_p(T^{1/a_1}(R)) \cap \spa(\overline E_p)$.
	\item	The free part of $\overline{x}_1$ is a maximal set of linearly independent vectors in $\inn_p(T^{1/a_1}(R))$ that lie outside this divisorial span.
	\end{enumerate}
\end{lemma}

\begin{proof}   $\overline{x}_1t^{1/a_1}t^{1/a_1}$  generates  a maximal admissible center for $\inn_p(T^{1/a_1}(R))$ 
if and only if  $\overline{x}_1$ is the smallest subspace adapted to $\overline{E}_p$ and containing $\inn_p(T^{1/a_1}(R))$. 
Moreover, we have
$
{\overline{x}_1}/{V_p} 
= \frac{\inn_p\left(T^{1/a_1}(R)\right)}{V_p \cap \inn_p\left(T^{1/a_1}(R)\right)} 
= \frac{T^{1/a_1}(R) + \mathrm{span}(E_p)}{m_p^2 + \mathrm{span}(\overline{E}_p)}
$ and thus \\${\overline{x}_1}=\inn_p(\overline{x}_1)$ is uniquely determined. The remaining part follows.
\end{proof}

As a consequence of the construction we have

\begin{lemma}
Suppose \( \inv_p^1(R) = (a_1, \ldots, a_1, a_{1+}, \ldots, a_{1+}) \), with \( s \) free and \( (r - s) \) divisorial components. Then there exists a partial coordinate system \( \overline{x} = (x_1, \ldots, x_r) \) at \( p \), adapted to \( E \), which extends to a full coordinate system \( (x_1, \ldots, x_n) \) at $p$ also adapted to \( E \), such that:
\begin{enumerate}
    \item \( x_i \in T^{1/a_1}(R) \), $x_i\notin E$, for \( i \leq s \) (free components),
    \item \( x_i \in E \) and \( \partial_{x_i}(T^{1/a_1} R) = \mathcal{O}_U \) for \( s < i \leq r \) (divisorial components),
    \item \( \partial_{x_j}(T^{1/a_1} R) \subseteq m_p \) for \( j > r \).
\end{enumerate}
Moreover, the invariant satisfies
\[
\inv_p^1(R) = a_1 \cdot \inv(\overline{x}). \qed
\]
\end{lemma}

\begin{definition} \label{maxim}
Under the above conditions, $\overline{x}$ is called a \emph{maximal contact} of $R$ at $p$. If the conditions (1) and (2)  hold locally on open $U$ for a certain $a_1$, we say that $\overline{x}$ is a \emph{partial maximal contact} of $(R,a_1)$.
\end{definition}
\begin{lemma} \label{obvious}
If $\overline{x}$ is a partial maximal contact on $U$ for $(R,a_1)$, then $\ord_q(R) \leq a_1$ for all $q \in U$.
\end{lemma}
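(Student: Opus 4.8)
First I would reduce to a pointwise statement: fix a point $q\in U$, and it suffices to prove $\ord_q(R)\le a_1$. Choose a presentation $R=\cO_X[f_jt^{b_j}]_{j=1,\dots,s}$ and pick one coordinate $x_i$ occurring in the partial system $\overline{x}=(x_1,\dots,x_r)$. Since $\overline{x}$ is a partial maximal contact of $(R,a_1)$ on $U$, Definition \ref{maxim} applied to $x_i$ gives $\frac{\partial}{\partial x_i}\bigl(T^{1/a_1}(R)\bigr)=\cO_U$; in particular $T^{1/a_1}(R)\ne 0$ (so at least one $b_ja_1$ is a positive integer), and the ideal $\frac{\partial}{\partial x_i}\bigl(T^{1/a_1}(R)\bigr)$ is not contained in the maximal ideal $\mathfrak m_q$. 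Hence there is a germ $g\in T^{1/a_1}(R)$ at $q$ with $\partial_{x_i}(g)(q)\ne 0$.

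The plan is then a single differentiation step. By Definition \ref{cota} we may write $g=\sum_{j}\sum_{|\alpha|=b_ja_1-1}h_{j,\alpha}\,\cD_{\overline{x}^{\alpha}}(f_j)$ with $h_{j,\alpha}\in\cO_{X,q}$, the inner sum being empty unless $b_ja_1\in\ZZ_{\ge 1}$. Applying $\partial_{x_i}$ and using the Leibniz rule together with the identity $\partial_{x_i}\cD_{\overline{x}^{\alpha}}=(\alpha_i+1)\,\cD_{\overline{x}^{\alpha+e_i}}$, one sees that $\partial_{x_i}(g)$ is an $\cO_{X,q}$-linear combination of derivatives $\cD_{\overline{x}^{\beta}}(f_j)$ with $|\beta|\le b_ja_1$. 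Since $\partial_{x_i}(g)(q)\ne 0$, at least one of these derivatives must be a unit at $q$: there are indices $j$ and $\beta$ with $|\beta|\le b_ja_1$ and $\cD_{\overline{x}^{\beta}}(f_j)(q)\ne 0$.

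Finally, since a differential operator of order $|\beta|$ carries $\mathfrak m_q^{m}$ into $\mathfrak m_q^{\max(m-|\beta|,0)}$, the nonvanishing of $\cD_{\overline{x}^{\beta}}(f_j)$ at $q$ forces $\ord_q(f_j)\le|\beta|\le b_ja_1$. As $f_j$ is a homogeneous generator of $R$ in degree $b_j$, this gives $\ord_q(R)=\min_{a\in\Gamma\setminus 0}\ord_q(R_a)/a\le\ord_q(R_{b_j})/b_j\le\ord_q(f_j)/b_j\le a_1$, which is the assertion.

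The computation itself is routine; the only point needing care is the multi-index bookkeeping — verifying that after the extra $x_i$-derivative the expression still involves only $\cD_{\overline{x}^{\beta}}(f_j)$ with $|\beta|\le b_ja_1$ (and never $b_ja_1+1$), consistently with the integrality constraint $b_ja_1\in\ZZ$ built into $T^{1/a_1}(R)$. As a consistency check, by Lemma \ref{Giraud3}(2) the desired conclusion is equivalent to $\overline{\cT}^{<1/a_1}(R)(U)=\cO_U$, and the argument above produces, at each $q\in U$, an element of $\cT^{<1/a_1}(R)$ not lying in $\mathfrak m_q$, which is exactly what is needed.
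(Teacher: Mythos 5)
Your argument is correct and follows essentially the same route as the paper's: the condition $\frac{\partial}{\partial x_i}\bigl(T^{1/a_1}(R)\bigr)=\cO_U$ from Definition \ref{maxim} forces some derivative $\cD_{\overline{x}^{\beta}}(f_j)$ with $|\beta|\le b_ja_1$ to be a unit at each $q\in U$, giving $\ord_q(f_j)\le b_ja_1$ and hence $\ord_q(R)\le a_1$. You spell out the Leibniz-rule bookkeeping more carefully than the paper does (the paper states $|\alpha|=b_ja_1$ exactly, whereas after differentiating coefficients one only gets $|\beta|\le b_ja_1$, which is what you correctly observe and is all that is needed).
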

\begin{proof}
There exists a generator $f_jt^{b_j}$ of $R$ and multi-index $|\alpha| = a_1 b_j$, such that $D_{\overline{x}^\alpha}(f_j)$ invertible at $q$, hence  $\ord_q(f_j) \leq a_1b_j$ and 
$\ord_q(f_jt^{b_j}) \leq  a_1$.
\end{proof}

Immediately we see
\begin{lemma}\label{extend} Any partial maximal contact $\overline{x}$ of $R$ for $a_1=\ord_p(R)$  centered at $p$ extends to a maximal contact at $p$ so that $\inv^1_p(R)\leq a_1\inv(\overline{x})$.\qed
\end{lemma}
\begin{example}
Let $R = \cO_X[(x_1+x_2+x_3^2)^3 + y^4 + z^5)t]$ with $a_1 = 3 = \ord_0(R)$, and $x_i$ are divisorial. Then
\[
T^{1/3}(R)t^{1/3} = \cO_X[(x_1+x_2+x_3^2, y^2, z^3)t^{1/3}],
\]
and the maximal contact at $0$ is $(x_1,x_2) t^{1/3}$.Here $\inv_0^1(R)=(3+,3+)=3\cdot\inv(x_1,x_2)$.
\end{example}
\begin{remark}
If no exceptional divisors are present, i.e., \( E = 0 \), one can consider a \emph{single maximal contact} of the form \( x t^{1/a} \), where \( x \) is a local parameter in \( T^{1/a_1}(R) \) at a point \( p \in X \).

Alternatively, one may a choose a local parameter \( x \in D_{\overline{x}^\alpha}(f_j) \), where \( f_j t^{b_j} \in R \) satisfies
\[
\ord(f_j t^{b_j}) = \frac{\ord(f_j)}{b_j} = a_1, \quad \text{and} \quad |\alpha| = b_j a_1 - 1.
\]
Although this approach is conceptually straightforward, it is often less convenient and slower in practice for computing the invariant. Note that in this case, the {\it maximal contact} is determined by a maximal partial system of local coordinates, each of which defines a {\it single maximal contact}.
\end{remark}
\begin{corollary} \label{max66}  
Let \( R \) be a Rees algebra with a maximal contact element \( \overline{x} \) at a point \( p \).  
Suppose \( \mathcal{A} = \mathcal{O}_X[\overline{x}_1 t^{1/a_1}, \ldots, \overline{x}_k t^{1/a_k}]^{\mathrm{int}} \) is an admissible center for \( R \) at \( p \), with \( a_1 = \operatorname{ord}_p(R) \).

\begin{itemize}
    \item Then, after a change of presentation of \( \mathcal{A} \), we have:
$
    \overline{x} \subseteq \overline{x}_1\subset \cA_{1/a_1}.
   $

    \item Moreover, if \( \operatorname{inv}^1_p(R) = \operatorname{inv}^1(\cA) \), then, after a change of presentation of \( \mathcal{A} \), we also have:
$
    \overline{x} = \overline{x}_1.
 $
\end{itemize}

\end{corollary}
\begin{proof} 
 According to Lemmas  \ref{easy} and 
 \ref{val}, the free coordinates in $\overline{x}$ are a part of $\overline{x}_{1}$ after a coordinate change of presentation of $\cA$. Moreover,
 by Lemma \ref{max}(2)  the divisorial coordinates of $\overline{x}_{1}$ contain those of  $\overline{x}$. Consequently, we obtain the  inclusion $\overline{x}\subseteq \overline{x}_1$ which is the equality if $\cA$ if $\inv^1(\cA)=\inv^1_p(R)$  then $$\inv^1_p(R)=a_1\inv(\overline{x})=\inv^1(\cA)=a_1(\inv(\overline{x}_1)).$$ 
 
 \end{proof}

\begin{proposition} \label{max666}    Given a Rees algebra $R$ with maximal contact $\overline{x}_1$ at $p$ and of order  $a_1$. Then if $\cA$ is a maximal admissible center for $R$ then $\cA_{|V(\overline{x}_1)}$ is a maximal admissible center for $C_{\overline{x}_1t^{1/a_1}}(R)$. Conversely if  $\cA$ is a center such that $\ord(A)=a_1$, $ \overline{x}_1$  is a maximal contact for $\cA$ at $p$ , and
$\cA_{|V(\overline{x}_1)}$ is a maximal admissible center for $C_{\overline{x}_1t^{1/a_1}}(R)$ then  $\cA$ is a maximal admissible center for $R$, and $\cA=\cO_X[\overline{x}_1t^{1/a_1},\cA_{|V(\overline{x}_1)}]$ in the filtered form.


\end{proposition}
\begin{proof}  By Corollary \ref{max66},  we can assume that  $\cA$ has a form $\cO_X[\overline{x}_1 t^{1/a_1}, \ldots, \overline{x}_k t^{1/a_k}]^\inte$, when we can apply Lemma \ref{Coef1} which implies that in such a case $\cA$ is maximal admissible for $R$ iff $\cA_{|V(\overline{x}_1)}$  is maximal admissible for $C_{\overline{x}_1t^{1/a_1}}(R)$.

\end{proof}

\begin{lemma} \label{partial}
If $\overline{x}_1 t^{1/a_1}$ is a maximal contact at $p$ with $\ord_p(R) = a_1$, then it is a partial maximal contact on a neighborhood of $p$. 
\end{lemma}
\begin{proof} The conditions extend to a neighborhood. Moreover if $x_i$ is divisorial at $p$ then it is considered divisorial in a neighborhood of $p$.
\end{proof}

\begin{lemma} \label{max4}
Let $\overline{x}$ be a partial maximal contact of $(R, a_1)$ on $U$, and set $b_1 := a_1 \inv(\overline{x})$. Then
the set of the points $\supp(\inv^1(R)\geq \overline{b}_1)$  is contained  in 
$V(\overline{x})$.
\end{lemma}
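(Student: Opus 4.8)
The statement to prove is: if $\overline{x}$ is a partial maximal contact of $(R,a_1)$ on $U$, and $b_1 := a_1\inv(\overline{x})$, then $\supp\inv^1(R,\overline{b}_1)\subseteq V(\overline{x})$. My plan is to argue by contraposition: fix a point $p\in U$ with $p\notin V(\overline{x})$, and show that $\inv^1_p(R)\neq \overline{b}_1$ — in fact I will show $\inv^1_p(R) < \overline{b}_1$ in the lexicographic order on $((\QQ_+)_{\geq 0})^{\leq n}$. Since $p\notin V(\overline{x})$, at least one coordinate $x_i$ in the partial maximal contact is a unit in $\cO_{X,p}$ (or more precisely its germ does not vanish at $p$).

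The first key step is to bound $\ord_p(R)$. By Lemma \ref{obvious}, since $\overline{x}$ is a partial maximal contact of $(R,a_1)$ on $U$, we already know $\ord_q(R)\leq a_1$ for every $q\in U$, so in particular $\ord_p(R)\leq a_1$. I then split into two cases according to whether $\ord_p(R) < a_1$ or $\ord_p(R) = a_1$. In the case $\ord_p(R) < a_1$: by Lemma \ref{order}, if $R\subseteq\cO_X[\overline{x}'_1 t^{1/a'_1},\ldots]^\Int$ is any admissible center at $p$ then $a'_1\leq\ord_p(R)<a_1$, hence the first entry $\overline{b}'_1$ of $\inv_p(R)$ has all its components equal to $a'_1$ or $(a'_1)_+$, which is strictly smaller in the order than any component of $\overline{b}_1$ (whose components are $a_1$ or $(a_1)_+$). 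Thus $\inv^1_p(R) < \overline{b}_1$. In the case $\ord_p(R)=a_1$: by Lemma \ref{max2}, any maximal admissible center at $p$ can be presented so that $\overline{x}_1$ contains a maximal contact $\overline{x}^{\,p}$ of $R$ at $p$, and by Lemma \ref{max3}(1), $\inv^1_p(R) = a_1\inv(\overline{x}^{\,p})$. It now suffices to show $\inv(\overline{x}^{\,p}) < \inv(\overline{x})$, equivalently that the maximal contact at $p$ is strictly "shorter/lighter" than the partial maximal contact $\overline{x}$ when $p\notin V(\overline{x})$.

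The heart of the argument — and the step I expect to be the main obstacle — is this last comparison. The point is that $\overline{x}$, being a partial maximal contact of $(R,a_1)$ on $U$, has each coordinate $x_j$ satisfying $\frac{\partial}{\partial x_j}(T^{1/a_1}R) = \cO_U$, i.e.\ some derivative of a generator lies in $T^{1/a_1}(R)$ and is a unit along the $x_j$-direction. At the point $p\notin V(\overline{x})$, the coordinate(s) $x_i$ with $x_i(p)\neq 0$ cannot belong to a maximal contact of $R$ at $p$: a maximal contact must be a partial system of local parameters, hence must lie in $m_p$. More carefully, $T^{1/a_1}(R)$ at $p$, reduced modulo $m_p$, must be examined; since $\ord_p(R) = a_1$ forces $\ord_p(T^{1/a_1}R) = 1$ (as in the proof of Lemma \ref{max}), the image of $T^{1/a_1}(R)$ in $m_p/(\cI_{E,p}+m_p^2)$ determines the free part of the maximal contact at $p$, and the image in $m_p/m_p^2$ determines the divisorial part. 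The coordinates among $\overline{x}$ that are units at $p$ simply do not contribute to these images. Hence the maximal contact $\overline{x}^{\,p}$ at $p$ has strictly fewer coordinates (or at least is lexicographically smaller once one tracks free versus divisorial weights), giving $\inv(\overline{x}^{\,p}) < \inv(\overline{x})$, and therefore $\inv^1_p(R) = a_1\inv(\overline{x}^{\,p}) < a_1\inv(\overline{x}) = \overline{b}_1$. In both cases $p\notin\supp\inv^1(R,\overline{b}_1)$, which is the contrapositive of the claim. The delicate bookkeeping will be making the "strictly smaller" precise in the presence of the SNC/divisorial weights $1$ versus $1_+$, and handling the subtlety that $\overline{x}$ need not itself be a maximal contact (only partial), so one must invoke Lemma \ref{max}(2)--(3) to extend and compare images in $m_p/m_p^2$ rather than comparing the tuples directly.
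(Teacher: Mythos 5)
Your overall strategy (contrapositive, case split, comparing the first entry $\inv^1$ via partial/maximal contacts) follows the same lines as the paper, and your first case ($\ord_p(R)<a_1$) is handled correctly. But the heart of the argument — your second case, $\ord_p(R)=a_1$ — has a genuine gap, and in fact contains a wrong assertion that suggests a misdiagnosis of the mechanism.

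You claim: ``The coordinates among $\overline{x}$ that are units at $p$ simply do not contribute to these images. Hence the maximal contact $\overline{x}^{\,p}$ at $p$ has \emph{strictly fewer coordinates} (or at least is lexicographically smaller\ldots).'' The ``strictly fewer coordinates'' alternative is backwards: by the convention on comparing sequences of different lengths (shorter $=$ padding with $\infty$), a \emph{shorter} partial maximal contact gives a \emph{larger} $\inv$, not a smaller one. What actually happens at $p\notin V(\overline{x})$ with $\ord_p(R)=a_1$ is this: since a free $x_j\in\overline{x}$ lies in $T^{1/a_1}(R)$, none of the free coordinates of $\overline{x}$ can be a unit at $p$ (else $\ord_p(R)<a_1$), so they all vanish at $p$ and still contribute $r$ free coordinates to a partial maximal contact at $p$. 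Additionally, since some divisorial $x_i$ is a unit at $p$, the condition $D_{x_i}(T^{1/a_1}R)=\cO_U$ supplies an element $u\in T^{1/a_1}(R)$ with $D_{x_i}(u)$ invertible; this $u$ is a local parameter at $p$ (as $T^{1/a_1}R$ cannot contain a unit when $\ord_p(R)=a_1$), is linearly independent of $x_1,\ldots,x_r$ and of the divisorial coordinates at $p$ (these all have trivial $x_i$-component), and is therefore a new \emph{free} local parameter. Consequently $(x_1,\ldots,x_r,u)$ is a partial maximal contact at $p$ with at least $r+1$ free entries, and Lemma~\ref{max3}(3) gives $\inv^1_p(R)\leq a_1\inv(x_1,\ldots,x_r,u)<a_1\inv(\overline{x})=\overline{b}_1$, the strict inequality coming from the $(r+1)$-st entry being $a_1$ versus $a_{1+}$. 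So the invariant drops not because the maximal contact ``loses'' a coordinate, but because the derivative condition on a divisorial coordinate converts into a new free coordinate when that divisor no longer passes through $p$. Your write-up mentions the derivative condition in passing but never uses it to produce $u$; without that step the comparison $\inv(\overline{x}^{\,p})<\inv(\overline{x})$ is unsupported. (One small simplification over your plan: the paper does not need to analyze the full maximal contact $\overline{x}^{\,p}$ at all — it constructs only the partial maximal contact $(x_1,\ldots,x_r,u)$ and applies Lemma~\ref{max3}(3), which is a one-sided bound and avoids the extra bookkeeping you anticipate.)
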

\begin{proof} Let $\overline{x} = (x_1, \ldots, x_r, x_{r+1}, \ldots, x_s)$, where $x_1, \ldots, x_r$ are free coordinates on $U$, and $x_{r+1}, \ldots, x_s$ are divisorial. 

Let $q \in U \setminus V(x_i)$, where $x_i \in \overline{x}$ is a free coordinate. Then $x_i \in T^{1/a_1}(R) = \cO_X$, and there exists a generator $f_j t^{b_j} \in R_{b_j}$ such that $\cD^{a_1 b_j - 1}(f_j)$ is invertible. It follows that:
\[
\ord_q(f_j t^{b_j}) \leq \frac{a_1 b_j - 1}{b_j} < a_1,
\]
so $\ord_q(R) < a_1$, and hence
$
\inv_q^1(R) < \overline{b}_1.
$

Now, consider a point $q \in V(x_1, \dots, x_r) \cap U \setminus V(x_i)$, where $x_i$ (for $i > r$) is a divisorial coordinate and $D_{x_i}(T^{1/a_1}(R)) = \mathcal O_X$. If $T^{1/a_1}(R)$ is invertible at $q$, then $\ord_q(R) < a_1$. Otherwise, there exists a local parameter $ u \in T^{1/a_1}(R) $ at $q$ such that $ D_{x_i}( u ) $ is invertible. Since $q \notin V(x_i)$, the coordinate $ x_i $ is free at $q$. Hence, $ u $ is also a free variable, linearly independent from both the free coordinates $ x_1, \dots, x_r $ and the other divisorial coordinates at $q$. 
This yields a partial maximal contact $(x_1, \ldots, x_r, u)$ with at least $r+1$ free coordinates which can be extended to  a maximal contact $\overline{x}'$ we have:
\[
\inv_q^1(R)= a_1\inv\overline{x}'\leq  a_1 \inv_1(x_1, \ldots, x_r, u) < \overline{b}_1 = a_1 \inv(\overline{x}).
\]

Note that, by construction, the $(r+1)$-th component of $a_1 \inv_1(x_1, \dots, x_r, u)$ is $a_1$, 
whereas in $\overline b_1 = a_1 \inv(\overline x)$ the corresponding component is $a_1^+$.
\end{proof}
\begin{lemma} \label{Giraud3}
$\ord_p(R)$ is upper semicontinuous. Thus, $\supp (\ord(R) \geq a_1)$ is closed.
\end{lemma}
\begin{proof} Let $R=\cO_X[f_jt^{b_j}]$. Then 
$\ord_q(R) \geq a_1$ iff all $D_{\overline{x}^\alpha}(f_j)$ vanish for $|\alpha| < b_j a_1$.
\end{proof}
This well known fact generalizes  the classical result which goes back to Hironaka:
\[
\supp\left(\ord(\cI) \geq a\right) = V\left(\cD^{\leq a-1}(\cI)\right),
\]
where $\cD^{\leq a-1}(\cI)$ denotes the ideal generated by $\cI$ and all the derivatives of order at most $a-1$ or the functions in $\cI$.
\begin{lemma} \label{Giraud2}
Let $\overline{x}=(x_1,\ldots,x_r)$ be a partial maximal contact on open subset $U$ which extends to coordinate system $(x_1,\ldots,x_n)$ adapted to $E$. Then on $U$  we have $$\inv^1(R)\leq \overline{b}_1:=a_1\inv(\overline{x}),$$ and 
\[
\supp (\inv^1(R)= \overline{b}_1) = \supp(\ord(R)= a_1) \cap V(\overline{x})\cap V(\sum^n_{i=r+1}\frac{\partial}{\partial x_i}(T^{1/a_1}R)))\]
is closed. In particular $\inv^1(R)$ is upper semicontinuous.
\end{lemma}
\begin{proof} Follows immediately from the definition of the maximal contact and Lemmas \ref{extend}, \ref{obvious} and  \ref{max4}.
\end{proof}

\begin{example}
Let $\cI = (x_1^2 + x_2)$ in $K[x_1, x_2]$, with $x_2$ divisorial. Then:
\[
T^1(\cI t) = (x_1^2 + x_2)t, \quad \inv^1_0(\cI t) = (1_+), \quad \supp (\inv^1(\cI t)=(1_+)) \subseteq V(x_2).
\]
At a point $p \neq 0$, the coordinate $x_1' := x_2 + x_1^2$ is free, which implies upper semicontinuity of the invariant:
\[
\inv^1_p(\cI t) = (1) < (1_+).
\]
\end{example}

 \subsection{Effective Algorithm  and Resolution Principle}\label{algo1}

\subsubsection{Algorithm and  Uniqueness of the Center}\label{algo}

Let \(R\) be a Rees algebra on a smooth variety \(X\). We construct a maximal admissible center \(\mathcal A\) for \(R\) at a point \(p \in X\) via a recursive procedure. As a reference, assume that a maximal admissible center 
\[
\mathcal A = \mathcal O_X\big[\overline x_1 t^{1/a_1}, \dots, \overline x_k t^{1/a_k}\big]^{\rm int}
\]
exists and can be expressed in the filtered form
\[
\mathcal A = \mathcal O_X\Big[ \overline x_1 t^{1/a_1}, \overline x_{2|V(\overline x_1)} t^{1/a_2}, \dots, \overline x_{k|V(\overline x_1, \dots, \overline x_{k-1})} t^{1/a_k} \Big]^{\rm int}.
\]
The center \(\mathcal A\) itself does not change throughout the process; only its presentation is adapted. This does not affect the construction and serves exclusively to clarify the procedure, thereby proving its uniqueness.

Set \( R_1 := R \subseteq \mathcal{A}_1 := \mathcal{A} \). By Lemma~\ref{order}, we have \( \mathrm{ord}_p(R_1) = a_1 \). Let \( \overline{x}_1' \subseteq T^{1/a_1}(R_1) \) be a maximal contact at \( p \), and define \( H_1 := V(\overline{x}_1') \). By Lemma~\ref{max66}, we may assume that \( \overline{x}_1 = \overline{x}_1' \) appears in the presentation of \( \mathcal{A} \), so that:
\[
\mathcal{A} = \mathcal{O}_X\left[\overline{x}_1 t^{1/a_1}, \mathcal{A}_{1|V(\overline{x}_1})\right]^{\mathrm{int}}.
\]

By Lemma~\ref{max666}, the restriction \( \mathcal{A}_2 := \mathcal{A}_{1|H_1}= \mathcal{O}_{H_1}\left[\overline{x}_{2|H_1} t^{1/a_2}, \ldots, \overline{x}_{k|H_2} t^{1/a_k}\right]^{\mathrm{int}} \) is a maximal admissible center for the coefficient algebra on $H_2=V(\overline{x}_1)$.
\[
R_2 := C_{\overline{x}_1 t^{1/a_1}}(R_1)
\]

\paragraph{\bf Recursive Step:}
For \( i \geq 2 \), proceed inductively:
\begin{itemize}
    \item $\cA_i=\cA_{i-1|H_{i-1}}=\cA_{|H_{i-1}}=\mathcal{O}_{H_{i-1}}\left[\overline{x}_{i|H_{i-1}}t^{1/a_{i}}, \ldots, \overline{x}_{k|H_{i-1}} t^{1/a_k}\right]^{\mathrm{int}} \)  is a maximal admissible center for $R_i$ at $p$, where $H_{i-1}=V(\overline{x}_1,\ldots,\overline{x}_{i-1})$  and $\cA=\mathcal{O}_X\left[\overline{x}_1 t^{1/a_1}, \ldots, \overline{x}_{i-1} t^{1/a_{i-1}}, \mathcal{A}_{i}\right]^{\mathrm{int}}$ in the filtered from.
    \item By Lemma~\ref{order}, \( \mathrm{ord}_p(R_i) = a_i \). Let \( \overline{x}_i' \) be a maximal contact for \( R_i \), and assume that \( \overline{x}_{i|H_{i-1}} = \overline{x}_i' \) appears in the presentation of the maximal admissible center \( \mathcal{A}_i= \mathcal{A}_{|H_{i-1}}\) for $R_i$ on $H_{i-1}$.
   
   \item Define the next Rees algebra on $H_i=V(\overline{x}_1,\ldots,\overline{x}_{i})$ by
\[
R_{i+1} := C_{\overline{x}_i t^{1/a_i}}(R_i):=C_{\overline{x}_i|H_{i-1} t^{1/a_i}}(R_i),
\]
with maximal admissible center
\[
\mathcal{A}_{i+1} = \mathcal{A}_{i|H_i} = \mathcal{O}_{H_i}\left[\overline{x}_{i+1|H_i} t^{1/a_{i+1}},\ \ldots,\ \overline{x}_{k|H_i} t^{1/a_k}\right]^{\mathrm{int}},
\]
where \( H_i := V(\overline{x}_1, \ldots, \overline{x}_i) \), and see Lemma~\ref{Coef1} for justification. The full center \( \mathcal{A} \), by Lemma~\ref{max666} has the filtered presentation:
\[
\mathcal{A} = \mathcal{O}_X\left[\overline{x}_1 t^{1/a_1},\ \ldots,\ \overline{x}_i t^{1/a_i},\ \mathcal{A}_{i+1}\right]^{\mathrm{int}}.
\]

\end{itemize}

\paragraph{\bf Termination:}
Eventually, the process terminates at \( R_{k+1} = 0 \), with \( \mathcal{A}_{k+1} = 0 \). This gives a filtered presentation of the maximal admissible center \( \mathcal{A} \) for \( R \):
\[
\mathcal{A} = \mathcal{O}_X\left[\overline{x}_1 t^{1/a_1}, \ldots, \overline{x}_k t^{1/a_k}\right]^{\mathrm{int}}.
\]
  The procedure, which is independent of $\cA$, results in a unique, predetermined center $\cA$ that is maximal admissible for $R$ at $p$.

The associated extended Rees algebra is given by:
\[
\mathcal{A}^{\mathrm{ext}} = \mathcal{O}_X\left[t^{-1/w_\cA}, \overline{x}_1 t^{1/a_1}, \ldots, \overline{x}_k t^{1/a_k}\right], \quad \text{where } w_\cA = \mathrm{lcm}(a_1, \ldots, a_k).
\]

\subsubsection{Existence}
\begin{proposition} \label{exi}  \label{ind}(see also \cite[Theorem 5.3.1]{ATW-principalization} (in the language of $\QQ$-ideals)), ). For any Rees algebra $R$ on a smooth variety $X$ over $K$ with a SNC divisor $E$, and for any point $p\in X$ there exists a uniquely  determined maximal admissible Rees center  $$\cA=\cO_X[\overline{x}_1t^{1/a_1},\ldots, \overline{x}_k t^{1}]^\inte,$$  where $\inv^1_p(R_i)=a_i(\inv_p(\overline{x}_i))$, and  with the inductive formula
\[
\inv_p(R) = \left( \inv^1_p(R),\, \inv_p\big(C_{\overline{x}_1}(R)\big) \right)=\left( \inv^1_p(R_1),\ldots,\inv^1_p(R_k) \right)
\]
 given by the recursion in Section \ref{algo}

\end{proposition}
\begin{proof}

The construction does not rely on the Rees center $\cA$ on the right side of the admissibility condition at any step. Moreover, the inductive process leads to a Rees center $\cA'$ that is admissible for $R$ at $p$. 
We can assume by induction that  the construction for $\cR_2=C_{\overline{x}_{i}t^{1/a_i}}(R)$ gives a unique admissible center $\cA_2$. Then, by Proposition \ref{max666},  $\cA'=\cA=\cO_X[\overline{x}_{1}t^{1/a_1},\cA_2]^\inte$ is maximal admissible for $R$ at $p$.

\end{proof}
\subsubsection{Comparison to the   coefficient ideals in the split form}
\begin{remark} \label{previous}
In an earlier version of this paper \cite{W23}, we used the coefficient ideal  associated with $(\overline{x}_1,D_1)$ in the form
\[
\mathcal{C}_{\overline{x}_1 t^{1/a_1}}(R) = \mathcal{O}_X\left[\overline{x}_1 t^{1/a_1}, i_{D_1}\left(C_{\overline{x}_1 t^{1/a_1}}(R)\right)\right]
\]
when passing to the completion, with the property that
\[
\mathcal{C}_{\overline{x}_1 t^{1/a_1}}(R)_{|V(\overline{x}_1)} = C_{\overline{x}_1 t^{1/a_1}}(R).
\]
This led to the recursive process:
\[
\mathcal{R}_1 = \mathcal{O}_X[\mathcal{I} t], \quad \mathcal{R}_{i+1} = \mathcal{C}_{\overline{x}_1 t^{1/a_1}}(R_i), \quad \mathcal{R}_{k+1} = \mathcal{A},
\]
where $\mathcal{A}$ is a maximal admissible center. This formulation enables control over derivations $D^i$ associated with the center and plays a critical role in the resolution of foliations, as discussed in \cite{ABTW25}.
\end{remark}

\subsubsection{Comparison to the Invariant in \cite{ATW-weighted}}

In the absence of exceptional divisors, one may work with \emph{single maximal contacts}, consisting of a local parameter \( x \in T^{1/a_1}(R) \), or equivalently \( x \in D_{\overline{x}^\alpha}(f)\), where \( f  \in R_b \) ,  ${|\alpha| = b a_1 - 1}$ and  $\ord_p(f)=a_1b$, with $a_1=\ord_p(R)$.

This gives rise to a sequence of Rees algebras:
\[
R = R_1 = \mathcal{O}_X[\mathcal{I} t], \quad R_{i+1} = C_{x_i t^{1/a_i}}(R_i),
\]
with each  \( x_i \in D_{\overline{x}^{\alpha_i}}(f_i)\), where \( f_i  \in (R_i)_{b_i} \) ,  ${|\alpha_i| = b_i a_i - 1}$ and  $\ord_p(f_i)=b_ia_i$, with $a_i=\ord_p(R_i)$


Then, our invariant can be expressed as:
\[
\inv_p(\mathcal{I}) = (\operatorname{ord}_p(R_1), \ldots, \operatorname{ord}_p(R_k)),
\]
which may be viewed as a simplified and more easily computable version of the invariant introduced in \cite{ATW-weighted}. That construction involved normalized orders of coefficient ideals, defined using Villamayor's approach.
\subsubsection{Semicontinuity of canonical invariant. Local admissibility} \label{semi1}

The inductive formula in Section \ref{ind} ensures the semicontinuity of the invariant \( \inv_p \), as established in \cite{ATW-weighted}. We prove this by induction on the dimension \( n = \dim(X) \).

Assume that the function \( p \mapsto \inv_p(R') \) is upper semicontinuous for any Rees algebra \( R' \) on a smooth variety \( H \) of dimension \( n - 1 \). Let \( R \) be a Rees algebra on a smooth \( X \) of dimension \( n \).

Suppose
\[
\inv_p(R) \geq (\overline{b}_1, \ldots, \overline{b}_k).
\]
This occurs if and only if
\[
\inv^1_p(R) > \overline{b}_1,
\quad \text{or} \quad
\inv^1_p(R) = \overline{b}_1 \ \text{and} \ 
\inv_p(C_{\overline{x}_1}(R)_{|H_1}) \geq (\overline{b}_2, \ldots, \overline{b}_k).
\]
This condition defines a closed subset of \( X \), since:
\begin{itemize}
    \item \( \inv^1_p(R) \) is upper semicontinuous (Lemma~\ref{Giraud2}),
    \item \( \inv_p(C_{\overline{x}_1}(R)_{|H_1}) \) is upper semicontinuous by the inductive hypothesis.
\end{itemize}

Hence, \begin{lemma} \label{max88}The canonical invariant \( \inv_p(R) \) is upper semicontinuous. \qed
\end{lemma}
Moreover, by induction and Proposition \ref{max666}, it follows that
\begin{lemma}[see also \cite{ATW-weighted}] \label{max8}
Let $R$ be a Rees algebra, and let $\mathcal{A}$ be a maximal admissible center for $R$ at a point $p \in X$. Then there exists an open neighborhood $U$ of $p$ such that 
\[
\max_U \inv(R) = \inv_p(R) = \inv(\mathcal{A})
\]
is attained precisely at $V(\mathcal{A})$.\qed 
\end{lemma}

\subsubsection{Duality of Rees centers}

The Rees centers \( \mathcal{A} \) admit a dual interpretation:

\begin{itemize}
    \item On one hand, they appear as admissible Rees algebras expressed using a formal (dummy) variable \( t \), for instance:
    \[
    \mathcal{A} = \mathcal{O}_X[\overline{x}_1 t^{1/a_1}, \ldots, \overline{x}_k t^{1/a_k}]^{\mathrm{int}}.
    \]
    
    \item On the other hand, they correspond to the algebras defining full cobordant blow-ups, where the variable \( t^{-1} \) becomes an actual coordinate on the blow-up space \( B \).
\end{itemize}

These two notions are deeply intertwined, as they share the same algebraic structure up to rescaling. However, to avoid notational ambiguity when both gradations (in \( t \) and in the geometry of \( B \)) appear simultaneously, we will adopt the convention of using a separate variable \( t_B \) for the cobordant blow-up space \( B \).

Thus, on \( B \), the blow-up algebra will be written as:
\[
\mathcal{O}_B[t_B^{-1}, \overline{x}_1 t_B^{w_1}, \ldots, \overline{x}_k t_B^{w_k}],
\]
while the admissibility conditions remain expressed in the variable \( t \). This careful distinction allows us to track the behavior of both the Rees algebra and its geometric realization through cobordant blow-up in a coherent and unified framework.
\subsubsection{Controlled transforms of the coordinates and ideals} \label{control3} \cite{ATW-weighted}

Let $\mathcal{I}$ be an ideal on a smooth (regular) variety $X$. The condition that $\mathcal{I}$ is admissible with respect to a center $\mathcal{A}$ can be expressed as:
\[
\mathcal{I} t \subset \mathcal{A}^\ext = \mathcal{O}_X\left[t^{-1/w_A},\, \overline{x}_1 t^{1/a_1},\, \ldots,\, \overline{x}_k t^{1/a_k}\right].
\]

Introducing a rescaled variable \( t_B \), we can rewrite this inclusion in the form:
\[
\mathcal{I} \cdot t_B^{w_A} \subset \mathcal{O}_B := \mathcal{O}_X\left[t_B^{-1},\, \overline{x}_1 t_B^{w_1},\, \ldots,\, \overline{x}_k t_B^{w_k} \right] = \mathcal{O}_X\left[t_B^{-1},\, \overline{x}_1',\, \ldots,\, \overline{x}_k'\right],
\]
where \( w_i = \frac{w_A}{a_i} \), and 
 \( \sigma^c(x_i):= \overline{x}_i' := \overline{x}_i t_B^{w_i} \)  are the {\it controlled transforms of the coordinates.}
According to Lemma~\ref{divisor}, the exceptional divisor on \( B_+ \) is defined by \( t_B^{-1} \), which is a local parameter on \( B \). As a result, the full transform \( \mathcal{O}_B \cdot \mathcal{I} \) is divisible by \( t_B^{-w_A} \), since:
\[
t^{w_A} \cdot \mathcal{O}_B \cdot \mathcal{I} = \mathcal{O}_B \cdot t_B^{w_A} \cdot \mathcal{I} \subset \mathcal{O}_B.
\]

\medskip

\noindent
We define the \emph{controlled transform} of the ideal \( \mathcal{I} \) as:
\[
\sigma^c(\mathcal{I}) := \mathcal{O}_B \cdot t_B^{w_A} \cdot \mathcal{I} \subset \mathcal{O}_B.
\]
\subsubsection{Strict transform of ideals}
\label{decrease1}

Recall that
\[
B_- = B \setminus V(t^{-1}) = X \times \mathbb{G}_m \to X
\]
is the trivial family over \( X \).

The \emph{strict transform} \( \sigma^s(\mathcal{I}) \) of an ideal \( \mathcal{I} \) on \( X \), under a full cobordant blow-up \( \sigma: B \to X \) along a center \( \mathcal{A} \), is defined as the schematic closure of 
\[
(\mathcal{O}_B \cdot \mathcal{I})|_{B_-} = \mathcal{O}_{B_-} \cdot \mathcal{I}.
\]
Equivalently,
\[
\sigma^s(\mathcal{I}) := \left\{ t^a f \in \mathcal{O}_B \,\middle|\, f \in \mathcal{O}_B \cdot \mathcal{I},\ a \geq 0 \right\}.
\]
This definition can be interpreted geometrically.
The strict transform of a closed subscheme \( Y \subset X \) is the schematic closure \( Y^s \) of \( Y \times \mathbb{G}_m \subset B_- =X \times \mathbb{G}_m\) in \( B \). It is defined by
\[
\mathcal{I}_{Y^s} := \sigma^s(\mathcal{I}_Y).
\]

This implies the inclusion:
\[
\sigma^c(\mathcal{I}) \subseteq \sigma^s(\mathcal{I}),
\quad \text{and hence} \quad
\inv(\sigma^c(\mathcal{I})) \geq \inv(\sigma^s(\mathcal{I}))
\quad \text{for any } p \in B.
\]

\begin{remark}
For the cobordant blow-up \( \sigma_+ : B_+ \to X \), the induced strict transform agrees with the classical definition. It is the schematic closure on \( B_+ \) of
\[
\mathcal{O}_{\sigma_+^{-1}(X \setminus V(\mathcal{J}))} \cdot \mathcal{I}_{|X \setminus V(\mathcal{J})} = (\mathcal{O}_B \cdot \mathcal{I})|_{B_+ \cap B_-}.
\]
\end{remark}

\subsubsection{Controlled transforms of Rees algebras and double gradation}

Let \( R = \bigoplus R_a t^a \) be a Rees algebra, and assume \( \mathcal{A} \) is an \( R \)-admissible center so that:
\[
R \subset \mathcal{A}^\ext = \mathcal{O}_X[t^{-1/w}, \overline{x}_1 t^{1/a_1}, \ldots, \overline{x}_k t^{1/a_k}],
\]
with \( w \) a common multiple of the \( a_i \) and $w_R$. By substituting \( t_B \mapsto t_B^w \) and setting \( w_i = w/a_i \), we construct the full cobordant blow-up:
\[
B = \Spec_X \mathcal{O}_B = \Spec_X \mathcal{O}_X[t_B^{-1}, \overline{x}_1 t_B^{w_1}, \ldots, \overline{x}_k t_B^{w_k}]
= \Spec_X \mathcal{O}_X[t_B^{-1}, \overline{x}_1', \ldots, \overline{x}_k'],
\]
where \( \overline{x}_i' := \overline{x}_i t_B^{w_i} \).

Using the double grading in both \( t \) and \( t_B \), we express admissibility on \( B \) as:
\[
\bigoplus R_a t_B^{aw} t^a
\subset \mathcal{O}_B[t_B^{-1} t^{-1/w}, \overline{x}_1 t_B^{w_1} t^{1/a_1}, \ldots, \overline{x}_k t_B^{w_k} t^{1/a_k}]
\subset \mathcal{O}_B[t^{-1/w}, \overline{x}_1' t^{1/a_1}, \ldots, \overline{x}_k' t^{1/a_k}].
\]

\noindent
Hence, the controlled transform of \( R \) on \( B \) is:
\[
\sigma^c(R) := \bigoplus \left(\mathcal{O}_B \cdot R_a \cdot t_B^{aw} \right)t^a,
\]
and is referred to as the \emph{controlled transform of \( R \)}. Consequently if $ft^b\in R$ then $$\sigma^c(f):=ft_B^{wb}t^b\in \sigma^c(R)$$ is called  the {\it controlled transform} of $ft^b$.

\subsubsection{Cobordant blow-ups and admissibility} \label{control}

\begin{lemma} \label{algebra}
Let \( \mathcal{A}^\ext = \mathcal{O}_X[t^{-1/w}, \overline{x}_1 t^{1/a_1}, \ldots, \overline{x}_k t^{1/a_k}]\) be admissible for \( R \), and let
\[
\sigma : B = \Spec_X \left(\mathcal{O}_B[t_B^{-1}, \overline{x}_1 t_B^{w_1}, \ldots, \overline{x}_k t_B^{w_k}] \right)
\]
be the full cobordant blow-up of \( \mathcal{A}^\ext \). Then the Rees center on \( B \)  given by
\begin{align*}
\mathcal{A}^\ext_B := \sigma^c(\mathcal{A}^\ext) 
&= \mathcal{O}_B\left[t^{-1/w},\ \overline{x}_1 t_B^{w_1} t^{1/a_1},\ \ldots,\ \overline{x}_k t_B^{w_k} t^{1/a_k}\right] \\
&= \mathcal{O}_B\left[t^{-1/w},\ \overline{x}_1' t^{1/a_1},\ \ldots,\ \overline{x}_k' t^{1/a_k}\right],
\end{align*}
is admissible for 
$
\sigma^c(R) = \bigoplus \mathcal{O}_B R_a\, t_B^{aw} t^a
$
along the vertex \( V_B(\overline{x}_1', \ldots, \overline{x}_k') \). \qed
\end{lemma}

\subsubsection{Derivations on cobordant blow-up} 
\label{derivations}

Consider the full cobordant blow-up
\[
B = \Spec_X\left( \mathcal{O}_X[t_B^{-1}, x_1 t_B^{w_1}, \ldots, x_k t_B^{w_k}] \right) \to X
\]
of a center 
\[
\mathcal{A}^\ext = \mathcal{O}_X[t^{-1/w}, x_1 t^{1/a_1}, \ldots, x_k t^{1/a_k}].
\]

The sheaf of derivations \( \mathcal{D}_X \) on \( X \) is a coherent \( \mathcal{O}_X \)-module, locally generated by the derivations \( D_{x_i}=\partial_{x_i} \). Using the chain rule, we express the derivations \( D_{x_i'}=\partial_{x_i'} \) on \( B \) as:
\[
\sigma^c(D_{x_i}):=D_{x_i'} = t_B^{-w_i} D_{x_i}, \quad \text{and} \quad \sigma^c(D_{x_i})=D_{x_j'} = D_{x_j}.
\]

We now extend this using the principle of double gradation. For each graded derivation \( t^{-1/a_i} D_{x_i} \), we define its \emph{controlled transform} as:
\[
\sigma^c(t^{-1/a_i} D_{x_i}) = t^{-1/a_i} t_B^{-w_i} D_{x_i} = t^{-1/a_i} D_{x_i'}.
\]

In general, the {\it controlled transform of the sheaf} \( t^{-1/a_1} \mathcal{D}_X \) is defined as the subsheaf
\[
\sigma^c(t^{-1/a_1} \mathcal{D}_X) := t^{-1/a_1} \mathcal{O}_B \cdot t_B^{-w_1} \mathcal{D}_X \subseteq t^{-1/a_1} \mathcal{D}_B,
\]
where \( \mathcal{D}_B \) is the sheaf of derivations on \( B \), graded by \( t^{-1/a_1} \).

The sheaf \( \mathcal{O}_B \cdot t_B^{-w_1} \mathcal{D}_X \) is generated by:
\[
t_B^{-(w_1 - w_i)} t_B^{-w_i} D_{x_i} = t_B^{-(w_1 - w_i)} D_{x_i'}
\quad \text{for } i = 1, \ldots, k,
\]
and
\[
t_B^{-w_1} D_{x_i} = t_B^{-w_1} D_{x_i'}
\quad \text{for } i = k+1, \ldots, n.
\]
The action of derivations commute with controlled transforms:
\begin{lemma} \label{DER}
The action of derivations commutes with controlled transforms. More precisely, we have for $ft^b\in R_bt^b$:
\[
\sigma^c\left(t^{-1/a_i} D_{x_i}\right)\left(\sigma^c\left(f t^{b}\right)\right)
= \sigma^c\left(\left(t^{-1/a_i} D_{x_i}\right)\left(f t^{b}\right)\right),
\]
and similarly for differential operators:
\[
\sigma^c\left(t^{-1/a_1} \mathcal{D}_X\right)\left(\sigma^c\left(R_b t^{b}\right)\right)
= \sigma^c\left(\left(t^{-1/a_1} \mathcal{D}_X\right)\left(R_b t^{b}\right)\right)\subset   t^{-1/a_1}\cD_B \left(R_b t^{b}\right).
\]
\end{lemma}

\subsubsection{The order of the controlled transforms}

\begin{lemma}[\cite{ATW-weighted}]
Let $\sigma: B \to X$ be a cobordant blow-up of an $R$-admissible center 
\[
\mathcal{A}^\ext = \mathcal{O}_X[t^{1/w}, \overline{x}_1 t^{1/a_1}, \ldots, \overline{x}_k t^{1/a_k}],
\]
where $\ord_p(R) \leq a_1$ for some point $p \in X$. Then 
\[
\ord_{p'}(\sigma^c(R)) \leq a_1 \quad \text{for any } p' \in B.
\]
\end{lemma}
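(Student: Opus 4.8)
The plan is to reduce everything to the description of the controlled transform and its graded derivations given in Sections~\ref{control} and~\ref{derivations}, and to the characterization of order by membership in a monomial center from Lemma~\ref{order}. Recall that $\ord_p(R)\le a_1$ for all $p\in X$ is, by Lemma~\ref{order}, equivalent to $R\subseteq \cO_X[m_pt^{1/a_1}]^\Int$ locally at every $p$, i.e.\ to $R\subseteq \cO_X[\overline{y}\,t^{1/a_1}]^\Int$ where $\overline{y}=(x_1,\dots,x_n)$ is a local coordinate system at $p$ extending the maximal-contact coordinates of the center $\cA$. The point is that this admissibility relation is preserved under the controlled transform: applying $\sigma^c$ to the inclusion $R\subseteq \cO_X[\overline{y}\,t^{1/a_1}]^\Int$ gives, by Lemma~\ref{algebra} (with $\cA^\ext$ replaced by the ``full'' center $\cO_X[t^{-1/w},\overline y\,t^{1/a_1}]$ containing all $n$ coordinates), the inclusion
$$\sigma^c(R)\subseteq \cO_B[\,t^{-1/w},\overline y'\,t^{1/a_1}\,]^\Int,$$
where $\overline y'=(x_1',\dots,x_k',x_{k+1},\dots,x_n)$ are the transformed coordinates $x_i'=x_it_B^{w_i}$ (and $x_j'=x_j$ for $j>k$), which form a local coordinate system on $B$ at every point $p'\in\sigma^{-1}(p)$ lying on the vertex, by the local description of the cobordant blow-up in Section~\ref{local}.

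\textbf{Key steps.}
First I would fix $p'\in B$ and set $p:=\sigma(p')\in X$; the claim is local at $p'$, so I may work in $\cO_{B,p'}$, or even in its completion. Second, I would choose a local coordinate system $\overline y$ at $p$ that is \emph{subordinate to the center}, namely one extending the maximal-contact system $(x_1,\dots,x_k)$ appearing in the presentation $\cA^\ext=\cO_X[t^{-1/w},\overline x_1t^{1/a_1},\dots,\overline x_kt^{1/a_k}]$; then $\overline y'$ as above is a local coordinate system on $B$ at $p'$ — this is exactly the content of Section~\ref{local}, which shows $B$ is locally regular with semi-invariant parameters $t_B^{-1},x_1',\dots,x_k',x_{k+1},\dots,x_n$. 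Third, I would invoke the hypothesis $\ord_p(R)\le a_1$ together with Lemma~\ref{order} to write $R\subseteq \cO_X[\overline y\, t^{1/a_1}]^\Int$ in a neighborhood of $p$ (this uses that $m_p=(\overline y)$). Fourth, I would apply the controlled-transform/double-gradation formalism of Section~\ref{control} — concretely Lemma~\ref{algebra} — to this inclusion: since $\overline y\,t^{1/a_1}$ is itself an $R$-admissible center (containing $\cA$), its cobordant transform $\sigma^c(\overline y\,t^{1/a_1})$ has underlying center $\cO_B[\overline y'\,t^{1/a_1}]^\Int$, and $\sigma^c(R)\subseteq \cO_B[t^{-1/w},\overline y'\,t^{1/a_1}]^\Int=\cO_B[\overline y'\,t^{1/a_1}]^\Int$ after passing to the non-negative part by Lemma~\ref{comp}/Lemma~\ref{comp3}. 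Fifth, since $(\overline y')$ is the maximal ideal $m_{p'}$ (at a vertex point) or at least contained in it, Lemma~\ref{order} read backwards gives $\ord_{p'}(\sigma^c(R))\le a_1$. For a point $p'\in B$ not on the vertex, either $p'\in B_-=X\times\GG_m$, where $\sigma^c(R)$ restricts to (a rescaling of) $R$ and $\ord_{p'}\le a_1$ is immediate, or $p'$ lies on the exceptional divisor away from the vertex, where one of the $x_i'$ is a unit and the computation only drops the order further.

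\textbf{Main obstacle.}
The only genuinely delicate point is the bookkeeping of the double gradation and of the integral closures: one must make sure that applying $\sigma^c$ to the inclusion $R\subseteq \cO_X[\overline y\,t^{1/a_1}]^\Int$ really lands inside $\cO_B[\overline y'\,t^{1/a_1}]^\Int$ and not merely inside something larger, i.e.\ that taking integral closure commutes with the controlled transform in the relevant way; this is essentially Lemma~\ref{algebra}, but one should check its hypotheses apply with the enlarged center $\overline y$ in place of $\overline x_1,\dots,\overline x_k$ (the enlargement is harmless because $a_1\le a_i$ and the extra coordinates are assigned the smallest weight $1/a_1$, so admissibility is only weakened). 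A secondary, purely notational, subtlety is keeping $t$ and $t_B$ separate as in Section~\ref{derivations}; once the roles are fixed, the argument is a one-line consequence of Lemma~\ref{order} and Lemma~\ref{algebra}. I do not expect any new estimate beyond what is already packaged in those two lemmas.
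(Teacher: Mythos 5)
Your plan rests on the order--admissibility correspondence of Lemma~\ref{order}, but you have the direction of that correspondence reversed, and this reversal undermines the whole scheme. Lemma~\ref{order} states that an inclusion $R\subseteq \cO_X[m_p t^{1/a_1}]^\Int$ forces $\ord_p(R)\geq a_1$, i.e.\ $\ord_p(R)=\max\{a_1\mid R\subset \cO_X[m_pt^{1/a_1}]^\Int\}$: admissibility to a monomial center with leading weight $1/a_1$ yields a \emph{lower} bound on the order, and the inclusion $R\subseteq \cO_X[m_pt^{1/a_1}]^\Int$ is a monotone \emph{decreasing} condition in $a_1$. Thus your ``Fifth'' step, where you conclude from $\sigma^c(R)\subseteq \cO_B[\overline y'\,t^{1/a_1}]^\Int$ that $\ord_{p'}(\sigma^c(R))\le a_1$, actually produces $\ord_{p'}(\sigma^c(R))\ge a_1$ --- the opposite inequality. (That lower bound is indeed true, and it is essentially what Lemma~\ref{algebra} is designed for: it is needed in Proposition~\ref{lower}(1). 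But it is not the statement you are asked to prove, which is the upper bound.) No rearrangement of the admissibility inclusion will give an upper bound on order, because upper bounds on order correspond to \emph{non}-membership in monomial centers, a negative condition that does not transfer through $\sigma^c$ in the way your plan envisions.

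The paper proves the upper bound by a different mechanism, the one encoded in the cotangent ideal and Lemma~\ref{Giraud3}: the hypothesis $\ord_p(R)\le a_1$ is equivalent to the existence of a generator $f_jt^{b_j}$ of $R$ and a multi-index $\alpha$ with $|\alpha|=b_ja_1$ for which $\cD_{x^\alpha}f_j$ is invertible at $p$. One then observes, using the double-gradation bookkeeping of Section~\ref{derivations}, that
$$\cD_{x^\alpha}f_j=\bigl(t_B^{-b_jw}t^{-b_j}\cD_{x^\alpha}\bigr)\bigl(f_j\,t_B^{b_jw}t^{b_j}\bigr)\in \cD_B^{b_ja_1}\bigl(\sigma^c(R_{b_j})\bigr),$$
so the very same unit sits in $\cD_B^{b_ja_1}(\sigma^c(R_{b_j}))$, forcing $\ord_{p'}(\sigma^c(R))\le a_1$ at every $p'$ over $p$. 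This is the correct tool: invertibility of a derivative is preserved (trivially) under the controlled transform, so it carries upper bounds on order forward; admissibility inclusions carry lower bounds. Your secondary worry about invoking Lemma~\ref{algebra} for a center $\cO_X[\overline y\,t^{1/a_1}]^\Int$ larger than the center $\cA^\ext$ actually blown up is real (Lemma~\ref{algebra} as stated pertains to the blow-up of $\cA^\ext$ itself, and the extension needs the original admissibility $R\subseteq\cA^\ext$ to keep $\sigma^c(R)$ inside $\cO_B$), but it is moot given the main directional error.
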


\begin{proof}
Write the Rees algebra $R$ locally as 
\[
R = \mathcal{O}_X[f_j t^{b_j}]_{j = 1, \ldots, s}.
\]
If $\ord_p(R) = a_1$, then for some $j$, we have $\ord_p(f_j t^{b_j}) = b_j a_1$, which means there exists a multi-index $\alpha$ with $|\alpha| = b_j a_1$ such that the derivative $\mathcal{D}_{x^\alpha} f_j$ is invertible at $p$.

Now, using Lemma \ref{DER}, we get
\begin{align*}
\cO_B&=\sigma^c(\mathcal{D}_X^{b_j a_1}(f_j ))=
\sigma^c( t^{-b_j}\mathcal{D}_X^{b_j a_1})(\sigma^c(f_j t^{b_j})) \subset \\ \subset\,\, &  t^{-b_j} \mathcal{D}_B^{b_j a_1}(\sigma^c(R_{b_j} t^{b_j})) 
= \mathcal{D}_B^{b_j a_1}(\sigma^c(R))_{b_j}) = \mathcal{O}_B,
\end{align*}
which shows that the order of $\sigma^c(R)$ at any $p' \in B$ satisfies 
\[
\ord_{p'}(\sigma^c(R)) \leq a_1.
\]
\end{proof}


\subsubsection{Controlled transforms of cotangent ideal}
Similarly we have 
\begin{lemma}\label{tangent} \cite{ATW-weighted},   Let 
$\sigma: B\to X$ be a cobordant blow-up of $R=\cO_X[f_jt^{b_j}]$-admissible center $\cA^\ext=\cO_X[ t^{1/w},\overline{x}_1t^{1/a_1},\ldots, \overline{x}_kt^{1/a_k}]$. If $T^{1/a_1}(R)$ is the cotangent ideal for $R$ 
then $$\sigma^c(T^{1/a_1}(R)t^{1/a_1})\subseteq (T^{1/a_1}(\sigma^c(R))t^{1/a_1}.$$ \end{lemma}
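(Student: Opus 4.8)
The statement to prove is that for a cobordant blow-up $\sigma\colon B\to X$ of an $R$-admissible center $\cA^\ext=\cO_X[t^{-1/w},\overline{x}_1t^{1/a_1},\ldots,\overline{x}_kt^{1/a_k}]$, with $R=\cO_X[f_jt^{b_j}]$ and cotangent ideal $T^{1/a_1}(R)=\sum_{|\alpha|=b_ja_1-1}\cO_X\cD_{\overline{x}^\alpha}(f_j)$, one has $\sigma^c(T^{1/a_1}(R)t^{1/a_1})\subseteq T^{1/a_1}(\sigma^c(R))t^{1/a_1}$. The guiding idea is exactly the ``double gradation'' bookkeeping set up in Section~\ref{derivations}: the cotangent ideal is built by applying the graded operators $\cD_{\overline{x}t^{1/a_1}}^\alpha=t^{-|\alpha|/a_1}\cD_{\overline{x}^\alpha}$ to generators, and these operators have well-understood controlled transforms, namely $\sigma^c(t^{-1/a_1}\cD_X)=t^{-1/a_1}\cO_Bt_B^{-w_1}\cD_X\subseteq t^{-1/a_1}\cD_B$. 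So the plan is to chase a generator of $T^{1/a_1}(R)t^{1/a_1}$ through its controlled transform and recognize the result inside $T^{1/a_1}(\sigma^c(R))t^{1/a_1}$.

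\textbf{Key steps.} First I would recall that $\sigma^c(R)=\bigoplus_a(\cO_B\cdot R_a\cdot t_B^{wa})t^a$, so a generator of $\sigma^c(R)$ in gradation $t^{b_j}$ is $f_jt_B^{wb_j}t^{b_j}$, and that its order in the $x_i'$-coordinates is at most $a_1$ by the preceding order lemma, so $T^{1/a_1}(\sigma^c(R))t^{1/a_1}=\sum_{|\beta|=b_ja_1-1}\cD_{\overline{x}'^\beta}(f_jt_B^{wb_j})t^{1/a_1}$ makes sense (here $\beta$ runs over multi-indices in the full coordinate system $\overline{x}'$ on $B$; note $t_B^{wb_j}$ is constant for the $x$-derivations). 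Second, I would take a typical generator $\cD_{\overline{x}^\alpha}(f_j)t^{1/a_1}\in T^{1/a_1}(R)t^{1/a_1}$ with $|\alpha|=b_ja_1-1$, and compute its controlled transform: by definition $\sigma^c(\cD_{\overline{x}^\alpha}(f_j)t^{1/a_1})=t_B^{w\cdot(1/a_1)}\cdot(\text{appropriate power of }t_B)\cdot \cO_B\cdot \cD_{\overline{x}^\alpha}(f_j)t^{1/a_1}$ — more precisely one tracks that $\cD_{\overline{x}^\alpha}(f_j)$ sits in gradation $b_j-(|\alpha|/a_1)=1/a_1$ inside $\cD_{\overline{x}t^{1/a_1}}^{|\alpha|}(f_jt^{b_j})$, so its controlled transform picks up the factor $t_B^{w/a_1}$ and lies in $\cO_B\cdot\cD_{\overline{x}t^{1/a_1}}^{|\alpha|}(f_jt^{b_j})t_B^{wb_j}$. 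Third — the computational heart — I would use the chain-rule identity $\cD_{x_i}=t_B^{w_i}\cD_{x_i'}$ iterated $|\alpha|$ times to rewrite $t_B^{wb_j}\cD_{\overline{x}^\alpha}(f_j)$ as $t_B^{wb_j-\sum_\ell w_{i_\ell}}\cD_{\overline{x}'^\alpha}(f_j)$ where the sum is over the entries of $\alpha$; since each $w_i=w/a_i\le w/a_1=w_1$ and $|\alpha|=b_ja_1-1$, the exponent $wb_j-\sum w_{i_\ell}\ge wb_j-|\alpha|w_1$ is $\ge w_1$ (here one uses $b_ja_1\ge 1$, i.e. that $f_jt^{b_j}$ has order $\ge$ its nominal weight, equivalently that $a_1$ was chosen as the order), so this is a genuine element of $\cO_B$ times $\cD_{\overline{x}'^\alpha}(f_jt_B^{wb_j})$. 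Finally, $\cD_{\overline{x}'^\alpha}(f_jt_B^{wb_j})$ with $|\alpha|=b_ja_1-1$ is precisely a generator of $T^{1/a_1}(\sigma^c(R))$, so after multiplying by $t^{1/a_1}$ the controlled transform lies in $\cO_B\cdot T^{1/a_1}(\sigma^c(R))t^{1/a_1}$, which is what we want. One should do this both for the $x_i$ with $i\le k$ and for the free coordinates $x_j$ with $j>k$ (where $\cD_{x_j'}=\cD_{x_j}$ and no $t_B$-factor appears, which only makes the exponent bookkeeping easier).

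\textbf{Main obstacle.} The only genuinely delicate point is the exponent count in the third step: one must verify that after converting $\cD_{\overline{x}^\alpha}$ into $\cD_{\overline{x}'^\alpha}$ via the chain rule, the accumulated power of $t_B$ is nonnegative (indeed large enough that the expression is integral, not just rational), so that the controlled transform really lands in $\cO_B$ rather than in a localization. This is where the hypothesis $\ord_p(R)\le a_1$ — equivalently, that $a_1$ is the smallest weight in the admissible center, so $a_1\le a_i$ and $w_i\le w_1$ for all $i$, and that $b_ja_1\ge1$ — gets used, exactly as in the order lemma immediately preceding. I expect the rest to be a routine, if notation-heavy, unwinding of the double-gradation conventions of Section~\ref{derivations}, entirely parallel to the proof of the order lemma and of Lemma~\ref{algebra}; no new idea beyond careful bookkeeping should be required.
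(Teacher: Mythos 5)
Your overall strategy matches the paper's proof exactly: take a generator $\cD_{\overline{x}^\alpha}(f_j)t^{1/a_1}$ of $T^{1/a_1}(R)t^{1/a_1}$, compute its controlled transform $\cD_{\overline{x}^\alpha}(f_j)\,t_B^{w_1}\,t^{1/a_1}$, convert the $X$-derivations to $B$-derivations via the chain rule, and verify that the accumulated power of $t_B$ is nonpositive so that the prefactor lies in $\cO_B$. The paper phrases this more compactly through $\sigma^c(\cD^\alpha_{\overline{x}t^{1/a_1}})(\sigma^c(f_jt^{b_j}))\in t^{-|\alpha|/a_1}\cD_B^{|\alpha|}(\sigma^c(R_{b_j}))$, with the exponent count hidden inside the containment $\sigma^c(t^{-1/a_1}\cD_X)\subseteq t^{-1/a_1}\cD_B$ from the derivations section, but the underlying computation is the same as yours.

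However, the written argument does not close up as stated. First, the ``lies in $\cO_B\cdot\cD^{|\alpha|}_{\overline{x}t^{1/a_1}}(f_jt^{b_j})t_B^{wb_j}$'' step passes from the controlled transform $\cD_{\overline{x}^\alpha}(f_j)t_B^{w_1}$ to the strictly larger $\cO_B$-module generated by $\cD_{\overline{x}^\alpha}(f_j)t_B^{wb_j}$ (indeed $t_B^{w_1-wb_j}\in\cO_B$ but $t_B^{wb_j-w_1}\notin\cO_B$), and this larger module is in fact \emph{not} contained in $T^{1/a_1}(\sigma^c(R))$, so the detour loses exactly the information needed. Second, the chain-rule rewriting has the wrong sign (or, equivalently, the prime is on the wrong side): iterating $\cD_{x_i}=t_B^{w_i}\cD_{x_i'}$ gives $\cD_{\overline{x}^\alpha}(f_j)=t_B^{+\sum_\ell w_{i_\ell}}\cD_{\overline{x}'^\alpha}(f_j)$, \emph{not} a factor of $t_B^{-\sum_\ell w_{i_\ell}}$; the correct identity you need is $\cD_{\overline{x}'^\alpha}\bigl(f_jt_B^{wb_j}\bigr)=t_B^{wb_j-\sum_\ell w_{i_\ell}}\cD_{\overline{x}^\alpha}(f_j)$. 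With that in hand, the argument is immediate and does not require the detour: the controlled transform equals $t_B^{\,w_1-\left(wb_j-\sum_\ell w_{i_\ell}\right)}\cD_{\overline{x}'^\alpha}\bigl(f_jt_B^{wb_j}\bigr)t^{1/a_1}$, and the exponent is $\le w_1+|\alpha|w_1-wb_j=(b_ja_1)w_1-wb_j=0$ using $w_i\le w_1$, so the prefactor is a nonnegative power of $t_B^{-1}$, hence in $\cO_B$, placing the controlled transform inside $\cO_B\cdot\cD_{\overline{x}'^\alpha}\bigl(f_jt_B^{wb_j}\bigr)t^{1/a_1}\subseteq T^{1/a_1}(\sigma^c(R))t^{1/a_1}$. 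Your inequality ``$wb_j-\sum w_{i_\ell}\ge w_1$'' is precisely this bound, so the key estimate was right; what needs fixing is that it should be applied to the controlled transform directly rather than to the enlarged module, and the chain-rule factor corrected.
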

\begin{proof} Let $f_jt^{b_j}\in R_{b_j}$, and $|\alpha|=b_ja_1-1$. Then, by Lemma \ref{DER}, 
\begin{align*} & \sigma^c((\cD_X^{ab_j-1}f_j)t^{1/a_1})= \sigma^c((\cD_X^{ab_j-1}t^{b_j-1/a_1})(\sigma^c( f_jt^{b_j}))\\
& \subset (\cD_B^{ab_j-1}t^{b_j-1/a_1})(\sigma^c(R_{b_j}t^{b_j}))=(\cD_B^{ab_j-1}(\sigma^c(R)_{b_j})t^{1/a_1}\subseteq  (T^{1/a_1}(\sigma^c(R)))t^{1/a_1}.
\end{align*}
\end{proof}
\subsubsection{Controlled transforms of a partial maximal contact} \label{control2}

\begin{lemma}[\cite{ATW-weighted}] \label{contact}
Let $\sigma: B \to X$ be a cobordant blow-up of the $R$-admissible center 
\[
\mathcal{A}^\ext = \mathcal{O}_X[t^{1/w}, \overline{x}_1 t^{1/a_1}, \ldots, \overline{x}_k t^{1/a_k}],
\]
where $R = \mathcal{O}_X[f_j t^{b_j}]$ and $\overline{x}_1 = (x_1, \ldots, x_r)$ is a partial maximal contact for $(R, a_1)$ on an open affine subset $U \subseteq X$. Then the controlled transform $\sigma^c(\overline{x}_1) = (x_1', \ldots, x_r')$ is a partial maximal contact for $\sigma^c(R)$ on $\sigma^{-1}(U)$.
\end{lemma}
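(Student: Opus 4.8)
The plan is to verify conditions (1) and (2) of Definition \ref{maxim} directly for the Rees algebra $\sigma^c(R)$ and the coordinates $\sigma^c(\overline{x}_1)$ on $\sigma^{-1}(U)$, transferring the corresponding statements from $U$ by means of Lemma \ref{tangent} and the chain rule for derivations on a cobordant blow-up (Section \ref{derivations}). First I would fix notation: write $\overline{x}_1=(x_1,\dots,x_s,x_{s+1},\dots,x_r)$ with $x_1,\dots,x_s$ free and $x_{s+1},\dots,x_r$ divisorial on $U$. Since $\overline{x}_1$ is the first block of the $R$-admissible center $\cA^\ext$, the full cobordant blow-up is $B=\Spec_X(\cO_X[t_B^{-1},\overline{x}_1t_B^{w_1},\dots,\overline{x}_kt_B^{w_k}])$ with $w_1=w/a_1$, and by Section \ref{local} the tuple $t_B^{-1},x_1',\dots,x_k',x_{k+1},\dots,x_n$, where $x_i'=x_it_B^{w_1}$ for $x_i\in\overline{x}_1$, is a local $T$-semiinvariant coordinate system on $B$. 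Hence $\sigma^c(\overline{x}_1)=(x_1',\dots,x_r')$ is automatically a partial coordinate system on $\sigma^{-1}(U)$; its members $x_{s+1}',\dots,x_r'$ cut out the strict transforms of the divisorial components through which $x_{s+1},\dots,x_r$ were cut out and are therefore divisorial on $\sigma^{-1}(U)$, while $x_1',\dots,x_s'$ remain free there since $V(x_i)$ was not a component of $E$.

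For condition (1), I would observe that for $i\le s$ the element $x_it^{1/a_1}$ is a section of $T^{1/a_1}(R)t^{1/a_1}$, so Lemma \ref{tangent} gives $\sigma^c(x_it^{1/a_1})=x_it_B^{w_1}t^{1/a_1}=x_i't^{1/a_1}\in\sigma^c\big(T^{1/a_1}(R)t^{1/a_1}\big)\subseteq T^{1/a_1}(\sigma^c(R))t^{1/a_1}$, i.e.\ $x_i'\in T^{1/a_1}(\sigma^c(R))$ on $\sigma^{-1}(U)$; since $D_{x_i'}(x_i')=1$ this immediately yields $\frac{\partial}{\partial x_i'}\big(T^{1/a_1}\sigma^c(R)\big)=\cO_{\sigma^{-1}(U)}$, which is condition (1) of Definition \ref{maxim}.

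For condition (2), I would take $s+1\le i\le r$ and use that, by Definition \ref{maxim}(2) on $U$, the ideal $\frac{\partial}{\partial x_i}(T^{1/a_1}(R))$ generated by the $D_{x_i}(g)$ over all $g$ with $gt^{1/a_1}\in T^{1/a_1}(R)t^{1/a_1}$ is the unit ideal. For each such $g$, Lemma \ref{tangent} produces $g':=gt_B^{w_1}$ with $g'\in T^{1/a_1}(\sigma^c(R))$, and by the formula $D_{x_i'}=t_B^{-w_1}D_{x_i}$ of Section \ref{derivations} together with $D_{x_i}(t_B)=0$ one computes $D_{x_i'}(g')=t_B^{-w_1}D_{x_i}(gt_B^{w_1})=D_{x_i}(g)$. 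Thus $\frac{\partial}{\partial x_i'}\big(T^{1/a_1}(\sigma^c(R))\big)$ contains every $D_{x_i}(g)$ and hence equals $\cO_{\sigma^{-1}(U)}$; together with the divisoriality of $V(x_i')$ noted above this establishes condition (2). One may also record, via Lemma \ref{obvious} and the order estimate for controlled transforms, that $\ord_{p'}(\sigma^c(R))\le a_1$ on $\sigma^{-1}(U)$, which is consistent but is not part of the requirement in Definition \ref{maxim}(1)--(2).

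The main obstacle I anticipate is purely notational: keeping straight the two gradations — the dummy variable $t$ governing admissibility and the cotangent ideal $T^{1/a_1}$, and the variable $t_B$ introduced by the cobordant blow-up — and justifying that the derivation $D_{x_i'}$ computed on $B$ coincides with $D_{x_i}$ once the factor $t_B^{-w_1}$ is cleared. Both of these are already encoded in the double-gradation formalism and the derivation formulas of Sections \ref{control3} and \ref{derivations}, so the argument consists of assembling those ingredients together with Lemma \ref{tangent}, rather than of any genuinely new computation.
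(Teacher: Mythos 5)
Your argument is correct and follows essentially the same route as the paper's own proof: both verify conditions~(1) and~(2) of Definition~\ref{maxim} by pushing the ideal $T^{1/a_1}(R)$ through Lemma~\ref{tangent} and using the derivation formula $D_{x_i'}=t_B^{-w_1}D_{x_i}$ from Section~\ref{derivations}. The only cosmetic difference is that you unpack the inclusion at the level of individual generators $g'=gt_B^{w_1}$ and their derivatives, whereas the paper records the same computation as a single chain of equalities and inclusions of ideals; both are valid and rest on the identical lemmas.
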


\begin{proof}
The controlled transform $\sigma^c(\overline{x}_1) = (x_1', \ldots, x_r')$ forms a local system of coordinates. 

If $x_i \in T^{1/a_1}(R) \cap \overline{x}_1$ is free, then $x_i' = \sigma^c(x_i) \in T^{1/a_1}(\sigma^c(R)) \cap \sigma^c(\overline{x}_1)$ is also free. Thus, condition (1) of Definition~\ref{maxim} is satisfied.

If $x_i \in \overline{x}_1$ is divisorial, then by Lemma~\ref{tangent}, we compute:
\begin{align*}
\mathcal{O}_B &= \mathcal{O}_B \cdot D_{x_i}(T^{1/a_1}(R))
= \sigma^c\left( {t^{-1/a_1}}D_{x_i})(T^{1/a_1}(R) t^{1/a_1})\right) \\
&\subseteq  D_{x'_i}\sigma^c(T^{1/a_1}(R))\subseteq  D_{x'_i}(T^{1/a_1}(\sigma^c(R)))=\cO_B,
\end{align*}
which verifies condition (2) of Definition~\ref{maxim}.
\end{proof}



\subsubsection{Restriction of cobordant blow-up to a maximal contact}
\begin{lemma} \cite{ATW-weighted} \label{rest} If $\cA^{\ext}$ is admissible for $R$ then $\cA^{\ext}_{|H}$ is admissible for $R_{|H}$ and $\sigma^c(R)_{|H}=\sigma_H^c(R_{|H})$, where $H:=V(\overline{x}_1)$. 
The restriction of the blow-up  $\sigma_X: B\to X$ of $\cA^{\ext}$ to the strict transform $H_B=V(\overline{x}'_1)$ of $H=V(\overline{x}_1)$ is the cobordant blow-up  $\sigma_H: H_B\to H$ of the restriction  $\cA^{\ext}_{|H}$. \qed
	
\end{lemma}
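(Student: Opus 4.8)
The statement has two parts: (a) that $\cA^\ext_{|H}$ is admissible for $R_{|H}$ together with the identity $\sigma^c(R)_{|H}=\sigma^c_H(R_{|H})$; and (b) that the restriction of the cobordant blow-up $\sigma_X\colon B\to X$ of $\cA^\ext$ to $H_B=V_B(\overline{x}'_1)$ is the cobordant blow-up $\sigma_H\colon H_B\to H$ of $\cA^\ext_{|H}$. Both are local on $X$, so I would work in a local coordinate system $x_1,\ldots,x_n$ compatible with $E$, in which $\cA^\ext=\cO_X[t^{-1/w},\overline{x}_1t^{1/a_1},\ldots,\overline{x}_kt^{1/a_k}]$ with $\overline{x}_1$ the chosen maximal contact and $H=V(\overline{x}_1)$.

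\emph{Part (a).} First I would observe that since $a_i\ge a_1$ for $i\ge 2$, each block $\overline{x}_i$ can, after restriction to $H$, be replaced by $\overline{x}_{i|H}$; indeed $\overline{x}_i-\overline{x}_{i|H}\in(\overline{x}_1)$ and $\overline{x}_1t^{1/a_i}\in\cA^\ext$, so $\cA^\ext_{|H}=\cO_H[t^{-1/w},\overline{x}_{2|H}t^{1/a_2},\ldots,\overline{x}_{k|H}t^{1/a_k}]$ is a genuine extended Rees center on $H$ (this is essentially the argument of Lemma \ref{spli2}). Admissibility of $\cA^\ext_{|H}$ for $R_{|H}$ is then immediate: the surjection $\cO_X[t^{1/w}]\to\cO_H[t^{1/w}]$ sends the inclusion $R\subset\cA^\ext$ to $R_{|H}\subset\cA^\ext_{|H}$, and restriction commutes with the integral closure operations involved up to the usual completion/\'etale localization argument already used throughout (Lemma \ref{Coef1}, Lemma \ref{Coef2}). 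For the identity $\sigma^c(R)_{|H}=\sigma^c_H(R_{|H})$, I would write $R=\cO_X[f_jt^{b_j}]$, so that $\sigma^c(R)=\cO_B[f_jt_B^{b_jw}t^{b_j}]$ by definition in \S\ref{control}, and note that $H_B=V_B(\overline{x}'_1)$ is the strict transform of $H$ in $B$, with $\cO_{H_B}=\cO_{H}[t_B^{-1},\overline{x}_{2|H}t_B^{w_2},\ldots]$. Restricting the generators $f_jt_B^{b_jw}t^{b_j}$ to $H_B$ gives exactly $(f_{j|H})t_B^{b_jw}t^{b_j}$, which by definition generate $\sigma^c_H(R_{|H})$ for the cobordant blow-up $\sigma_H$ of $\cA^\ext_{|H}$ — here one uses that the weights $w_i=w/a_i$ attached to $\overline{x}_i$ and to $\overline{x}_{i|H}$ are the same, so the two cobordant blow-ups use matching torus actions.

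\emph{Part (b).} This is the geometric content. The full cobordant blow-up is $B=\Spec_X(\cO_X[t_B^{-1},x_1t_B^{w_1},\ldots,x_kt_B^{w_k}])$, which by \S\ref{local} is the closed subscheme of $X\times\AA^{n+1}$ cut out by $x'_i-x_it_B^{w_i}$ with coordinates $t_B^{-1},x'_1,\ldots,x'_k,x_{k+1},\ldots,x_n$. The divisor $H=V(\overline{x}_1)\subset X$ pulls back, and the equations $x'_i=x_it_B^{w_i}$ for $x_i\in\overline{x}_1$ show that $V_B(\overline{x}'_1)=V_B(\sigma^c(\overline{x}_1))$ maps isomorphically onto the full cobordant blow-up of $H$ at $\cA^\ext_{|H}$: in the coordinates $t_B^{-1},x'_2,\ldots,x'_k,x_{k+1},\ldots,x_n$ it is precisely $\Spec_H(\cO_H[t_B^{-1},(x_2)_{|H}t_B^{w_2},\ldots,(x_k)_{|H}t_B^{w_k}])$. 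I would then check compatibility of the $T=\GG_m$-actions (the action on $B$ restricts to $H_B$ because $H_B$ is $T$-stable, being defined by semiinvariant parameters) so that the restriction of $\sigma_X$ to $H_B$ is $T$-equivariant and equals $\sigma_H$ as a morphism over $H$, and finally that the vertex and exceptional divisor of $\sigma_X$ restrict to those of $\sigma_H$.

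\textbf{Main obstacle.} The routine coordinate bookkeeping is straightforward; the only genuine subtlety is the interaction of \emph{restriction to $H$} with \emph{integral closure} in the definition of the Rees centers — i.e.\ showing $(\cA^\ext)_{|H}$ is still integrally closed (or at least defines the same center after the relevant completion), and that $\sigma^c$ applied to an integrally-closed generating set restricts correctly. As elsewhere in the paper this is handled by passing to $\widehat{\cO}_{X,p}$ where $D_{\overline{x}_1}$ splits (Corollary \ref{split44}), reducing everything to the split form where the generators restrict transparently; I would invoke Lemmas \ref{spli2}, \ref{Coef1}, \ref{Coef2} rather than redo the argument. Apart from that, the proof is essentially a direct unwinding of the definitions in \S\ref{local} and \S\ref{control}.
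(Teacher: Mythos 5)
The paper omits the proof of Lemma \ref{rest} entirely (the statement ends in \qed, as a direct consequence of the definitions in \S\ref{local} and \S\ref{control}), so there is no "paper's proof" to compare against. Your proposal is a correct and appropriately detailed elaboration of the intended definitional unwinding: the identification $\cO_{H_B}=\cO_B/(\overline{x}'_1)\simeq \cO_H[t_B^{-1},\overline{x}_{2|H}t_B^{w_2},\ldots,\overline{x}_{k|H}t_B^{w_k}]$ follows because for a coordinate $u\in\overline{x}_1$ one has $u=u't_B^{-w_1}\in(\overline{x}'_1)\cO_B$, so the surjection $\cO_B\to\cO_H[t_B^{\pm1}]$ has kernel exactly $(\overline{x}'_1)$ (by the monomial description of the graded pieces, Lemma \ref{comp2}), and that kernel is $t_B^{-1}$-saturated, which also confirms $H_B=V(\overline{x}'_1)$ is the strict transform of $H$. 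One small point worth making explicit in your write-up: the "cobordant blow-up of $\cA^\ext_{|H}$" in the statement must be understood with the \emph{same} rescaling parameter $w$ as $\cA^\ext$ (the definition of an extended center allows any multiple of $w_A$), since $\lcm(a_2,\ldots,a_k)$ can be a proper divisor of $\lcm(a_1,\ldots,a_k)$ and the resulting rings would differ by a finite cyclic cover of $\GG_m$ otherwise; you gesture at this with the remark that "the weights $w_i=w/a_i$ attached to $\overline{x}_i$ and to $\overline{x}_{i|H}$ are the same," which is the right fix.
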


\subsubsection{Controlled transform of the coefficient ideal}

\begin{lemma}[see also \cite{ATW-weighted}] \label{ccc}
Let $\sigma: B \to X$ be a cobordant blow-up of an $R$-admissible center 
\[
\mathcal{A} = \mathcal{O}_X[\overline{x}_1 t^{1/a_1}, \ldots, \overline{x}_k t^{1/a_k}]^{\inte},
\]
where $R = \mathcal{O}_X[f_j t^{b_j}]$. Then the controlled transform is given by
\[
\sigma^c(R) = \mathcal{O}_B[\sigma^c(f_j t^{b_j})],
\]
and we have the following commutativity:
\[
\sigma^c\left(C_{\overline{x}_1 t^{1/a_1}}(R)\right) = C_{\overline{x}_1' t^{1/a_1}}(\sigma^c(R)).
\]
\end{lemma}

\begin{proof} Write the coefficient ideal of \( R \) at \( \overline{x}_1 t^{1/a_1} \) as
\[
C_{\overline{x}_1 t^{1/a_1}}(R) := \mathcal{O}_{V(\overline{x}_1)}\left[ c_{j\alpha|V(\overline{x}_1)} \, t^{b_j - |\alpha|/a_1} \,\big|\, |\alpha| < b_j a_1 \right],
\]
where
\[
f_j t^{b_j} = \left( \sum_{\alpha} c_{j\alpha} \, \overline{x}_1^\alpha \right) t^{b_j} 
= \sum_{|\alpha| < a_1 b_j} c_{j\alpha} \cdot t^{b_j - |\alpha|/a_1} \cdot \overline{x}_1^\alpha t^{|\alpha|/a_1} 
\mod \left( \overline{x}_1 t^{1/a_1} \right)^{a_1 b_j},
\]
and \( c_{j\alpha} \in i_{D^1}(\mathcal{O}_H) \), where \( D^1 \) consists of the derivations \( \frac{\partial}{\partial x_i} \) for \( x_i \in \overline{x}_1 \). Then
\[
\sigma^c(f_j t^{b_j}) = 
 \sum_{|\alpha| < a_1 b_j} \sigma^c(c_{j\alpha} \, t^{b_j - |\alpha|/a_1}) \cdot \overline{x'}_1^\alpha t^{|\alpha|/a_1} \mod (\overline{x'}_1t^{1/a_1})^{a_1 b_j}
,\] 

where $\sigma^c(c_{j\alpha})\in  i_{\sigma^c(D^1)}(\mathcal{O}_{V(\overline{x'}_1)})$, and
 \[
C_{\overline{x'}_1 t^{1/a_1}}(\sigma^c(R)) = \mathcal{O}_{V(\overline{x}_1)}\left[ \sigma^c(c_{j\alpha} \, t^{b_j - |\alpha|/a_1})_{|V(\overline{x'}_1)}, |\alpha| < b_j a_1 \right]=
\sigma^c(C_{\overline{x}_1 t^{1/a_1}}(R)).\] 
\end{proof}
 
	

\subsubsection{The centers with maximal invariant}

\begin{proposition}\label{lower0}
Let $R = \bigoplus R_a$ be a Rees algebra on a smooth variety $X$ over a field $K$, such that $R_a \neq \mathcal{O}_X$ for all $a \in A$. Then there exists a unique Rees center $\mathcal{A}(R) = \mathcal{A}$ satisfying the following:
\begin{enumerate}
    \item The maximum value $\max \inv(R) = (\overline{b}_1, \ldots, \overline{b}_k)$ of the invariant $\inv_p(R)$ (for $p \in X$) is attained precisely on the closed subset $V(\mathcal{A})$;
    
    \item $\mathcal{A} = \mathcal{O}_X[\overline{x}_1 t^{1/a_1}, \ldots, \overline{x}_k t^{1/a_k}]^\inte$ is a maximal admissible center for $R$, with
    \[
    \inv(\mathcal{A}) = (\overline{b}_1, \ldots, \overline{b}_k), \quad \text{where} \quad \overline{b}_i = a_i \cdot \inv(\overline{x}_i).
    \]
\end{enumerate}
\end{proposition}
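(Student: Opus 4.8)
The statement is the global version of the local existence/uniqueness result already established in Proposition \ref{exi} together with the semicontinuity and local-constancy facts of Lemma \ref{max8}. The plan is to first use semicontinuity of $\inv_p(R)$ (established in Section \ref{semi1}) to produce the maximum value $\maxinv(R)$, then show that the maximal admissible center attached to this value at any point where it is attained is, in fact, the same center everywhere, hence globally defined.

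\textbf{Step 1: the maximum exists and is attained on a closed set.} By the inductive principle of Section \ref{ind} and the semicontinuity argument of Section \ref{semi1}, the function $p\mapsto \inv_p(R)$ on $X$ is upper semicontinuous with values in the well-ordered set $((\QQ_+)_{\geq 0})^{\leq n}$. Since $R_a\neq \cO_X$ for all $a$, the invariant is everywhere $\geq(\ord_p(R))\neq \emptyset$ at every point, and by Noetherianity of $X$ and well-orderedness of the target the supremum $\maxinv(R)=(\overline{b}_1,\ldots,\overline{b}_k)$ is attained; let $Z\subseteq X$ be the (nonempty, closed) locus where it is attained.

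\textbf{Step 2: the center is independent of the point of $Z$.} Fix $p\in Z$. By Proposition \ref{exi} there is a uniquely determined maximal admissible Rees center $\cA=\cO_X[\overline{x}_1t^{1/a_1},\ldots,\overline{x}_kt^{1/a_k}]^{\inte}$ for $R$ at $p$, with $\inv(\cA)=\inv_p(R)=\maxinv(R)$; recall it is produced by the canonical recursion $R_1=R$, $R_{i+1}=C_{\overline{x}_it^{1/a_i}}(R_i)$, with $\overline{x}_i$ a maximal contact of $R_i$ at $H_{i-1}$ and $a_i=\ord_p(R_{i|H_{i-1}})$. By Lemma \ref{max8} there is an open neighborhood $U\ni p$ on which $\maxinv_U(R)=\inv(\cA)$ is attained exactly along $V(\cA)\cap U$; in particular $Z\cap U=V(\cA)\cap U$. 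Since each step of the recursion is controlled by closed conditions — $\ord_q(R_{i|H_{i-1}})\geq a_i$ cuts out $V(\overline{T}^{\leq 1/a_i}(R_i))$ by Lemma \ref{Giraud3}, the maximal-contact locus is closed by Lemma \ref{Giraud2}(3), and the coefficient ideal construction is Zariski-local — the center $\cA$ constructed at $p$ restricts to the center constructed at any other $q\in Z\cap U$. Thus the assignment $q\mapsto(\text{center at }q)$, $q\in Z$, is locally constant on $Z$.

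\textbf{Step 3: gluing.} Cover $Z$ by the neighborhoods $U$ from Step 2. On overlaps $U\cap U'$ the two locally-constructed centers agree by the uniqueness in Proposition \ref{exi} (a maximal admissible center at a point is unique), so the ideals of sections $\cA_a\subseteq\cO_X$ glue to a well-defined coherent sheaf of graded subalgebras $\cA(R)=\cA\subseteq\cO_X[t^{1/w_A}]$, which over each $U$ has the stated presentation $\cO_X[\overline{x}_1t^{1/a_1},\ldots,\overline{x}_kt^{1/a_k}]^{\inte}$; that it is $R$-admissible and maximal with $\inv(\cA)=\maxinv(R)=(\overline{b}_1,\ldots,\overline{b}_k)$, $\overline{b}_i=a_i\inv(\overline{x}_i)$, is immediate from the local statements and the formula $\inv_p(R)=(a_1\inv(\overline{x}_1),\ldots,a_k\inv(\overline{x}_k))$ of Proposition \ref{exi}(4). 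Uniqueness of $\cA(R)$ globally follows from uniqueness at each point of $Z$ plus the fact that a maximal admissible center supported on $Z$ is determined by its restrictions to a cover.

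\textbf{Main obstacle.} The one genuinely non-formal point is verifying that the \emph{same} graded sheaf of algebras is obtained from the point-wise recursions on overlapping neighborhoods — i.e. that the noncanonical intermediate choices (maximal contacts $\overline{x}_i$, splittings) made near $p$ and near $q$ produce the identical terminal center. This is exactly where one must invoke the uniqueness half of Proposition \ref{exi} (via the Replacement Lemma \ref{val} and Corollary \ref{max5}, which show the terminal algebra is independent of all choices) rather than re-running the argument; once that is in hand, the coherence and gluing are routine. A secondary, purely bookkeeping point is checking that $Z$ equals $V(\cA)$ globally and not merely locally, which again follows by patching the local equalities $Z\cap U=V(\cA)\cap U$ of Lemma \ref{max8}.
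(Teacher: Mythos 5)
Your proposal is correct and follows essentially the same route as the paper: semicontinuity of $\inv_p(R)$ from Lemma \ref{max8}, attainment of $\maxinv(R)$ on a closed set $S$, local identification $S=V(\cA)$ via Lemma \ref{max8}, and gluing via the pointwise uniqueness of Proposition \ref{exi}. The paper's Step 2 is slightly more direct than yours: rather than arguing that each stage of the recursion is cut out by closed conditions, it simply observes that for $q\in V(\cA)\cap U$ the algebra $\cA$ is $R$-admissible (admissibility is a sheaf inclusion, hence open) and $\inv(\cA)=\maxinv_U(R)=\inv_q(R)$, so $\cA$ is \emph{the} maximal admissible center at $q$ by Proposition \ref{exi}; your ``Main obstacle'' paragraph correctly identifies that this is ultimately the load-bearing point. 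One minor caution: in Step 1 you appeal to ``well-orderedness of the target,'' which $((\QQ_+)_{\geq 0})^{\leq n}$ under lex order is not; the paper instead deduces that $\inv_p(R)$ takes only finitely many values (from Lemma \ref{max8} plus Noetherianity), which is what actually guarantees the supremum is attained.
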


\begin{proof}
By Lemma~\ref{max88}, the function $p \mapsto \inv_p(R)$ is upper semicontinuous and attains only finitely many values. Let 
$
S := \supp (\inv(R)= (\overline{b}_1, \ldots, \overline{b}_k))
$  
be the closed subset where the invariant achieves its maximal value.

For any point $p \in S$, let $\mathcal{A}_p$ denote a maximal admissible center for $R$ at $p$. By Lemma~\ref{max8}, there exists a neighborhood $U_p$ of $p$ such that $V(\mathcal{A}_p) \cap U_p = S \cap U_p$ and $\mathcal{A}_p$ is uniquely determined on $U_p$. Since the centers $\mathcal{A}_p$ agree on overlaps $U_p \cap U_q$, they glue together to define a global center $\mathcal{A}$ on $V(\mathcal{A}) = S$.

This defines a unique maximal admissible center $\mathcal{A}$ for $R$ associated with $\max \inv(R)$, as required.
\end{proof}

\subsubsection{Cobordant blow-ups of the centers with maximal invariant}

\begin{proposition}\label{lower}
(see also \cite{ATW-weighted})  
Let $R = \bigoplus R_a$ be a Rees algebra on a smooth variety $X$ over a field $K$, with $R_a \neq \mathcal{O}_X$ for all $a \in A$. 

Let $\mathcal{A} = \mathcal{A}(R)$ be the maximal admissible center associated to the maximum value of the invariant $\inv_p(R) = (\overline{b}_1, \ldots, \overline{b}_k)$, and let $\sigma: B \to X$ be the full cobordant blow-up of $\mathcal{A}^{\ext}$. Then:

\begin{enumerate}
    \item The maximum of $\inv(\sigma^c(R))$ remains equal to $(\overline{b}_1, \ldots, \overline{b}_k)$ and is attained precisely on $V(\sigma^c(\mathcal{A}))$. Moreover,
    \[
    \sigma^c(\mathcal{A}^{\ext}) = \mathcal{O}_B[t^{-1/w}, x'_1 t^{1/a_1}, \ldots, x'_k t^{1/a_k}]
    \]
    is a maximal admissible center for $\sigma^c(R)$.

    \item On the open subset $B_+ := B \setminus V(\sigma^c(\mathcal{A}))$, the invariant strictly decreases:
    \[
    \inv(\sigma^c(R)) < (\overline{b}_1, \ldots, \overline{b}_k).
    \]
\end{enumerate}
\end{proposition}
\begin{proof}
We proceed by induction on $\dim(X)$. If $\dim(X) = 0$, then $\mathcal{A} = R = \mathcal{O}_X = \mathcal{O}_X[0]$, $p = X$, and $\inv_p(R) = ()$ (the empty tuple), which corresponds to an infinite sequence of $\infty$.

Let $p \in V(\mathcal{A})$, and set $\ord_p(\mathcal{A}) = a_1$. Choose a neighborhood $U$ of $p$ such that $\overline{x}_1$ is a maximal contact for $R$ at $p$ and a partial maximal contact on $U$. Then, by Lemma~\ref{Giraud2}, the function $\inv^1$ attains its maximum $\overline{b}_1$ on
\[
\supp( \inv^1(R)= \overline{b}_1) \subset V(\overline{x}_1),
\]
and $\overline{x}_1$ remains a maximal contact along this subset.

For all $p \in \supp (\inv^1(R)=\overline{b}_1)$, we have:
\[
\inv_p(R) = (\inv^1_p(R), \inv_p(C_{\overline{x}_1 t^{1/a_1}}(R)).
\]

Now consider the cobordant blow-up $\sigma: B \to X$, and observe that, by Lemma~\ref{contact}, the controlled transform $\sigma^c(\overline{x}_1) = \overline{x}'_1$ is a partial maximal contact for $\sigma^c(R)$ on $B_U = \sigma^{-1}(U)$. Then, by Lemma~\ref{Giraud2}, $\inv^1(\sigma^c(R))$ attains its maximum $\overline{b}_1$ on
\[
\supp (\inv^1(\sigma^c(R))= \overline{b}_1) \subset H'_1 := V(\overline{x}'_1),
\]
and $\overline{x}'_1$ is a maximal contact on this locus.

By Lemma~\ref{ccc} and Section~\ref{ind}, we have:
\[
\inv_{p'}(\sigma^c(R)) = \left( \inv^1_{p'}(\sigma^c(R)), \inv_{p'}(C_{\overline{x}'_1 t^{1/a_1}}(\sigma^c(R))) \right).
\]

Now, by Lemma~\ref{rest}, the restriction of $\sigma: B \to X$ to $H'_1 = V_B(\overline{x}'_1)$ is the cobordant blow-up $\sigma_{H'_1}: H'_1 \to H_1$ of the restriction $\mathcal{A}^{\ext}_{|H_1}$. By the inductive hypothesis applied to $H_1$ (of dimension $\dim(X) - 1$), both conditions (1) and (2) of the proposition hold for $C_{\overline{x}_1 t^{1/a_1}}(R)_{|H_1}$ and the blow-up $\sigma_{H'_1}$. 

Moreover, $\mathcal{A}^{\ext}_{B|H'_1}$ is a maximal admissible center for the transformed algebra:
\[
\sigma^c_{H'_1}(C_{\overline{x}_1 t^{1/a_1}}(R))
= C_{\overline{x}'_1 t^{1/a_1}}(\sigma^c(R)),
\]
and the maximal value of $\inv(C_{\overline{x}'_1 t^{1/a_1}}(\sigma^c(R)))$ is equal to $\inv(\mathcal{A}^{\ext}_{B|H'_1})$ and is attained at $V(\mathcal{A}^{\ext}_{B|H'_1})$.

Consequently, by Proposition~\ref{max666} and Lemmas~\ref{algebra}, \ref{contact}, we deduce that
\[
\sigma^c(\mathcal{A})
= \mathcal{A}_B^{\ext}
= \mathcal{O}_B[\overline{x}'_1 t^{1/a_1}, \mathcal{A}^{\ext}_{B|H'_1}]
\]
is maximal admissible for $\sigma^c(R)$ at the vertex $V(\mathcal{A}) = V(\mathcal{A}_{|H'_1})$.

Furthermore, the maximal value of
\[
\inv_{p'}(\sigma^c(R))
= \left( \inv^1_{p'}(\sigma^c(R)), \inv_{p'}(C_{\overline{x}'_1 t^{1/a_1}}(\sigma^c(R))) \right)
\]
is equal to
\[
\inv(\mathcal{A}^{\ext}_B)
= \inv(\overline{b}_1, \inv(\mathcal{A}^{\ext}_{B|H'_1})),
\]
and is attained at $V(\mathcal{A}^{\ext}_B) = V(\mathcal{A}^{\ext}_{B|H'_1})$.
\end{proof}

\subsubsection{Resolution Principle}

\noindent
The resolution process for a rational Rees algebra $R$ on a smooth scheme $X$ with an SNC divisor $E$ over a field of characteristic zero follows these key steps:

\begin{center}
\begin{tikzcd}[column sep=huge, row sep=large]
(X, R) \arrow[d, "\inv_p(R)"', dashed] 
  \arrow[r, "\text{Blow-up at } \cA^{\ext}(R)"] 
  & (B, \sigma^c(R)) 
    \arrow[d, "\inv_p(\sigma^c(R))", dashed] 
    \arrow[r, "\text{Remove vertex } \Ver(B)"] 
    & \left(B_+, \sigma^c(R)|_{B_+}\right) \arrow[d, "\inv_p(\cdot)", dashed] \\
\maxinv_X(R) 
  & = \maxinv_B(\sigma^c(R)) 
  & > \maxinv_{B_+}(\sigma^c(R))
\end{tikzcd}
\end{center}

\vspace{1em}

\noindent
\textbf{Step-by-step summary:}
\begin{enumerate}
    \item Compute the invariant $\inv_p(R)$ on $X$. It is upper semicontinuous.
    \item The maximum $\maxinv_X(R)$ is achieved at a unique center $\cA^{\ext}(R) \subset X$.
    \item Blow up $X$ at $\cA^{\ext}(R)$ to obtain the cobordant space $B$, equipped with the transformed Rees algebra $\sigma^c(R)$.
    \item The invariant $\inv_p(\sigma^c(R))$ reaches the same maximum on $B$ at the vertex $\Ver(B) = V(\sigma^c(\cA^{\ext}))$.
    \item Removing the vertex $\Ver(B)$ results in a strict drop of the invariant on $B_+$:
    \[
    \maxinv_X(R) = \maxinv_B(\sigma^c(R)) > \maxinv_{B_+}(\sigma^c(R)).
    \]
\end{enumerate}

\subsection{Properties of the Invariant} \label{prop}
\subsubsection{The invariant $\inv$ at the smooth points}\label{invv}
Assume that $Y$ is a smooth subvariety of codimension  $k$  on a smooth variety $X$ and is described at  $p\in Y$ by a partial set of free local parameters $Y=V(u_1,\ldots,u_k)$  adapted  to an SNC divisor $E$.
Then  $$\cA:=\cO_X[(u_1,\ldots,u_k)t]$$  is a maximal $\cI_Y$- admissible center at $p$, with $$\inv_p(\cI)=(1,\ldots,1),$$ with $k$ entries equal $1$. Conversely, if
$\inv_p(\cI_Y)=(1,\ldots,1)$ is as above then there exists a partial system  of free local parameters $u_1,\ldots,u_k\in \cI$ adapted to  $E$, such that $$\cO_X[\cI_Yt]\subseteq \cO_X[(u_1,\ldots,u_k)t]^\inte=\cO_X[(u_1,\ldots,u_k)t].$$  So $\cI_Y=(u_1,\ldots,u_k)$ is smooth  generated by free coordinates and  adapted  to  $E$ for $Y$ having SNC with $E$ at $p\in X$.

\subsubsection{Torus action} \label{tor}

Suppose \( X \) admits a torus \( T \)-action and the Rees algebra \( R \) is \( T \)-invariant. Then the maximal admissible centers are canonical, hence \( T \)-stable. The algorithm from Section~\ref{algo} can be carried out using semiinvariant maximal contacts and derivations, ensuring that all intermediate Rees algebras \( R_i \) remain \( T \)-stable. As a result, one can inductively choose semiinvariant coordinates for the centers, corresponding to  maximal contacts lying in the \( T \)-stable cotangent ideals \( T^{1/a_i}(R_i) \).

If, in addition, \( X \) admits a geometric quotient \( X/T \) with all orbits of dimension \( \dim(T) \), then any \( T \)-stable center is a smooth \( T \)-invariant subvariety of codimension at most \( \dim(X) - \dim(T) = \dim(X/T) \). Hence, the number of semiinvariant coordinates in such a center is at most \( \dim(X/T) \).

Consequently, the values of the invariant \( \inv_p(\cI) \) throughout the resolution process
\[
X_0 \leftarrow X_1 \leftarrow X_2 \leftarrow \cdots
\]
lie in \( ((\mathbb{Q}_+)_{\geq 0})^k \), where \( k = \dim(X_i) - \dim(\mathbb{G}_m^i) = \dim(X/T) \) is preserved under equivariant blow-ups.
\subsubsection{The descending chain condition}

\begin{lemma}
Let \( \alpha := (a^1, \ldots, a^s) \) be a finite set of positive rational numbers. Let \( \Gamma^k_{\alpha} \subset (\mathbb{Q}_+)^k \) denote the set of possible values of \( \inv_p(R) \), where \( R \) ranges over all \( T \)-stable Rees algebras on smooth varieties \( X \) with a torus action, such that:
\begin{itemize}
    \item a geometric quotient \( X/T \) exists with \( \dim(X/T) = k \), and
    \item \( R \) is generated by homogeneous components \( R_{a^i} t^{a^i} \) for \( i = 1, \ldots, s \).
\end{itemize}
Then the set \( \Gamma_{\alpha} \) satisfies the descending chain condition (DCC).
\end{lemma}

\begin{proof}
Let  \(a := {\rm lcm}(a^1, \ldots, a^s) \). We proceed by induction on \( k = \dim(X/T) \).

For \( k = 1 \), the values of \( \inv_p(R) = \inv^1_p(R) \) lie in \( \frac{1}{a} \cdot \mathbb{N}_+ \), which clearly satisfies dcc.

For general \( k \), note that \( \mathcal{A} \) is \( R \)-admissible if and only if \( R_a t^a \subset \mathcal{A} \), and hence
\[
\inv_p(R) = \frac{1}{a} \cdot \inv_p(R_a),
\]
reducing the problem to the study of ideals \( \mathcal{I} = R_a \subset \mathcal{O}_X \).

For any such ideal \( \mathcal{I} \), the invariant satisfies
\[
\inv_p(\mathcal{I}) = \left( \inv^1_p(\mathcal{I}), \inv_p\left( C_{\overline{x}_1 t^{1/a_1}}(\mathcal{O}_X[\mathcal{I} t])) \right) \right),
\]
where \( C := C_{\overline{x}_1 t^{1/a_1}}(\mathcal{O}_X[\mathcal{I} t])= \bigoplus C_{a/a_1} \cdot t^{a/a_1} \) is generated in degrees \( \frac{1}{a_1}, \ldots, \frac{a_1 - 1}{a_1} \), and \( \dim(H_1/T) < k \).

By the inductive hypothesis on $k$, the set of values \( \inv_p(C) \) satisfies dcc. Since \( \inv^1_p(\mathcal{I}) \in  \bigcup_{i\leq k}(\mathbb{(Z_+)}_{\geq 0})^{ i} \) also satisfies dcc, we conclude that the full invariant \( \inv_p(\mathcal{I}) \) satisfies dcc.

It follows that the set of values of \( \inv_p(R) \) across any resolution sequence also satisfies dcc.
\end{proof}

\subsubsection{Functoriality of the invariant} \cite{ATW-weighted}

\begin{lemma} \label{funct}
The invariant \( \inv_p(R) \) and the associated maximal admissible center \( \mathcal{A} \) are functorial under smooth morphisms, field extensions, and group actions.
\end{lemma}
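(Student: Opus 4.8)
\textbf{Proof plan for Lemma \ref{funct} (functoriality of the invariant and the maximal admissible center).}

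The plan is to trace the entire inductive construction of the center from Proposition \ref{exi} and check that each of its three ingredients --- the order, the cotangent/maximal contact, and the coefficient ideal --- is preserved under the three classes of morphisms in question. Let $\phi\colon X'\to X$ be a smooth morphism of smooth schemes (base change by a field extension is a special case, and compatibility with group actions follows once one checks that the construction commutes with the structure maps defining the action). Write $R':=\cO_{X'}\cdot R$ for the preimage Rees algebra and $E'$ for the preimage SNC divisor; since $\phi$ is smooth, $E'$ is again SNC and the notion of ``compatible with $E'$'' pulls back. The goal is to show that if $p'\in X'$ maps to $p\in X$, then $\inv_{p'}(R')=\inv_p(R)$ and the pullback $\phi^*\cA$ of the maximal admissible center $\cA$ at $p$ is the maximal admissible center for $R'$ at $p'$.

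The key steps, in the order of the recursion of Proposition \ref{exi}, are as follows. First, the order is functorial: $\ord_{p'}(R')=\ord_p(R)$, because $\phi$ being smooth the local rings $\cO_{X,p}\to\cO_{X',p'}$ and their completions are faithfully flat with regular fibers, so the colengths computing $\ord_p$ (equivalently, the condition $R_at^a\subset\cO_X[m_pt^{1/a}]^{\Int}$ of Lemma \ref{order}) are unchanged; this handles $a_1=\ord_p(R)$. Second, the cotangent ideal commutes with pullback: $T^{1/a_1}(R')=\cO_{X'}\cdot T^{1/a_1}(R)$, since $\phi$ smooth means the relative differential operators extend the $\cD_X$-action and the derivatives $D_{\overline x^\alpha}$ pull back to derivatives on $X'$; moreover a free (resp.\ divisorial) coordinate at $p$ pulls back to a free (resp.\ divisorial) coordinate at $p'$ after completing in the fiber direction, so a maximal contact $\overline x_1$ at $p$ pulls back to a maximal contact of $R'$ at $p'$ (using Lemma \ref{max}(3)(4), the uniqueness of the image of the maximal contact in $m/m^2$ modulo the divisorial part). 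Third, the coefficient ideal commutes with pullback: after passing to completions and choosing a splitting as in Lemma \ref{split}, one has $C_{\overline x_1 t^{1/a_1}}(R')=\cO_{X'}\cdot C_{\overline x_1 t^{1/a_1}}(R)$, because the graded derivations $D_{\overline x_1 t^{1/a_1}}$ and the splitting all pull back; restriction to $H_1=V(\overline x_1)$ commutes with $\phi$, and $H_1'\to H_1$ is again smooth. Feeding these three facts into the recursion $R_{i+1}=C_{\overline x_i t^{1/a_i}}(R_i)$ and inducting on $\dim X$ (the base case $\dim X=0$ being trivial), one gets $R'_{k+1}=\cO_{X'}\cdot R_{k+1}=\cO_{X'}\cdot\cA^{\Int}$, hence $\cA'=\phi^*\cA$ and $\inv_{p'}(R')=(a_1\inv(\overline x_1),\dots,a_k\inv(\overline x_k))=\inv_p(R)$. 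Since by Proposition \ref{lower0} the global center $\cA(R)$ is glued from the local maximal admissible centers, this local statement globalizes: $\cA(R')=\phi^*\cA(R)$.

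For the group-action case, one applies the above to the two projections (or the action and projection maps) $G\times X\rightrightarrows X$ and invokes the uniqueness in Proposition \ref{exi}: a $G$-action on $(X,R,E)$ makes the two pullbacks of $\cA(R)$ to $G\times X$ agree, forcing $\cA(R)$ to be $G$-stable; this is exactly the statement already used in Section \ref{tor}. The main obstacle --- and the only place requiring genuine care rather than bookkeeping --- is the second step: verifying that the \emph{divisorial-versus-free} dichotomy built into the invariant is preserved, i.e.\ that a maximal contact pulls back to a maximal contact with the same free part and the same divisorial part. This needs the precise behavior of $E'\to E$ under the smooth morphism (a component of $E$ pulls back to a component of $E'$ with the same defining equation, and no new divisorial directions are introduced because the fibers are smooth and the fiberwise coordinates are free), together with Lemma \ref{max}(3)(4) which pins down the image of the maximal contact in the cotangent space uniquely; everything else is a faithfully flat descent/ascent argument on completions. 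Since the construction of Proposition \ref{exi} was deliberately arranged to use only the order, differential operators, and splittings --- all manifestly local and stable under smooth base change --- no further input is required.
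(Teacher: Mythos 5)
Your proposal is correct and takes essentially the same approach as the paper, which merely asserts in one sentence that the lemma follows from the functoriality of derivations, Rees algebras, coefficient ideals, and maximal contacts under the three classes of maps; you have simply unpacked that sentence by tracing the recursion of Proposition~\ref{exi} ingredient by ingredient and closing the group-action case with the standard two-projections argument via uniqueness.
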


\begin{proof}
The functoriality of \( \inv_p(R) \) and the center \( \mathcal{A} \) follows from the functoriality of the resolution algorithm described in Section~\ref{algo}. Each step of the algorithm-construction of Rees algebras, derivations, coefficient ideals, and maximal contacts-is compatible with base change, smooth morphisms, and equivariant structures. Therefore, the resulting invariant and center are preserved under these operations.
\end{proof}
\subsection{Motivating Examples}\label{ex}
\subsubsection{Brieskorn Singularities in Characteristic Zero} \label{11}
\begin{example}
Let \( X = \mathbb{A}^n_k = \Spec k[x_1, \ldots, x_n] \) and consider the hypersurface singularity at the origin given by
\[
f = \alpha_1 x_1^{c_1} + \alpha_2 x_2^{c_2} + \cdots + \alpha_n x_n^{c_n}, \quad \alpha_i \in k^\times,
\]
Note that in the case of hypersurface the singularity locus is described by the set of points where the multiplicity $\ord_p(f)\geq 1$, which by Lemma \ref{Giraud3} coincides with the vanishinig locus $V(f, \frac{\partial{f}}{\partial{x}_i})_{i=1}^n=V(x_1,\ldots x_n)$.

We group the monomials by common exponent and write:
\[
f = \overline{\alpha}_1\overline{x}_1^{a_1} + \cdots + \overline{\alpha}_k\overline{x}_k^{a_k},
\]
with
\[
\overline{\alpha}_j \overline{x}_j^{a_j} := \alpha_{i_{j-1}+1}x_{i_{j-1}+1}^{a_j} + \cdots + \alpha_{i_j}x_{i_j}^{a_j}.
\]
We aim to construct the maximal admissible Rees algebra
\[
\mathcal{A} = \mathcal{O}_X[\overline{x}_1 t^{1/a_1}, \ldots, \overline{x}_k t^{1/a_k}]^{\mathrm{int}},
\]
containing \( R := \mathcal{O}_X[f t] \), by iteratively adjoining maximal contact variables and computing coefficient ideals.

\paragraph{\bf Step 1.} Set \( R_1 := R= \mathcal{O}_X[f t]  \), and observe that \( \ord_p(f) = a_1 \). The cotangent ideal is generated of the derivatives of degrees $a_1-1$ of $f$  and is equal to: 
\[
T^{1/a_1}(R_1) = \cD^{a_1-1}(f) = (\overline{x}_1, \overline{x}_2^{a_2 - a_1}, \ldots, \overline{x}_k^{a_k - a_1}),
\]
which includes a maximal contact \( \overline{x}_1\in T^{1/a_1}(R_1) \). Let \( H_1 = V(\overline{x}_1) \).  Write 
$$ft=\overline{\alpha}_1(\overline{x}_1t^{1/a_1})^{a_1}+(\overline{\alpha}_2\overline{x}_2^{a_2}+\ldots +\overline{\alpha}_k x_k^{a_k})t=(\overline{\alpha}_1\overline{x}_1t^{1/a_1})^{a_1}+f_{|H_1} t.$$ in the graded coefficient form with respect to
 the graded  coordinate  $\overline{x}_1t^{1/a_1}$. 
Then the  coefficient ideal  is generated by  the only coefficient $f_{|H_1} \cdot t $ in the presentation of $f  t$.
\[
R_2 := C_{\overline{x}_1 t^{1/a_1}}(R_1) = \mathcal{O}_{H_1}[ f_{|H_1} t],\quad \text{with } f_{|H_1} = \overline{\alpha}_2\overline{x}_2^{{a}_2}+\ldots +\overline{\alpha}_k\overline{x}_k^{{a}_k}
\]

\paragraph{\bf Step 2.} Repeat the process and compute
$T^{1/a_2}_{H_1}(R_2)= (\overline{x}_2, \overline{x}_3^{a_3 - a_2}, \ldots, \overline{x}_k^{a_k - a_2}),$
and maximal contact $\overline{x}_2t^{1/a_2}$. Set  \( H_2 := V(\overline{x}_1,\overline{x}_2) \). $$ f_{|H_1}\cdot  t=\overline{\alpha}_2(\overline{x}_2t^{1/a_2})^{a_2}+(\overline{\alpha}_3\overline{x}_3^{a_3}+\ldots +\overline{\alpha}_kx_k^{a_k})t=\overline{\alpha}_2(\overline{x}_2t^{1/a_2})^{a_2}+ f_{|H_2} t$$
\[
R_3 := C_{\overline{x}_2 t^{1/a_2}}(R_2) = \mathcal{O}_{H_2}[ f_{|H_2} t],
\quad \text{with } f_{|H_2} = \overline{\alpha}_3\overline{x}_3^{{a}_3}+\ldots +\overline{\alpha}_k\overline{x}_k^{{a}_k}.
\]

\paragraph{\bf Inductive Step.}  Set $H_i:=V(\overline{x}_1,\ldots,\overline{x}_i)$.
 $$R_{i+1} := C_{\overline{x}_i t^{1/a_i}}(R_i) = \mathcal{O}_{H_i}[ f_{|H_i} t],\quad \text{with } f_{|H_i} = \overline{\alpha}_{i+1}\overline{x}_{i+1}^{{a}_{i+1}}+\ldots +\overline{\alpha}_k\overline{x}_k^{{a}_k}$$

The process terminates at $R_{k+1}=0$.

The maximal admissible center is 
\[
\mathcal{A} = \mathcal{O}_X[\overline{x}_1 t^{1/a_1}, \ldots, \overline{x}_k t^{1/a_k}]^{\text{int}} .
\]
Thus,  
\( \inv_p(f) =  (\underbrace{a_1, \dots, a_1}_{\#\overline{x}_1}, \dots, \underbrace{a_k, \dots, a_k}_{\#\overline{x}_k})=(c_1, \ldots, c_n) \).

\paragraph{\bf Cobordant Blow-Up.} Consider the extended algebra
\[
\mathcal{A}^{\ext} = \mathcal{O}_X[t^{-1/w}, \overline{x}_1 t^{1/a_1}, \ldots, \overline{x}_k t^{1/a_k}],
\quad \text{with } w = \lcm(a_1, \ldots, a_k).
\]
The full cobordant blow-up is
\[
B = \Spec(\mathcal{O}_X[t^{-1}, \overline{x}_1 t^{w_1}, \ldots, \overline{x}_k t^{w_k}]),
\quad \text{where } w_i = w / a_i.
\]
Going back to the original notation the controlled transform of $f = \sum_{i=1}^n\alpha_i x_i^{c_i} $
becomes
\[
\sigma^c(f) = t^{a_1 w_1} f = \sum_{i=1}^n \alpha_i (x_i')^{c_i},
\quad \text{with } x_i' = x_i t^{w_i}.
\]
The singularity reappears only at the vertex \( V(x_1', \ldots, x_n') \), while the complement \( V(\sigma^c(f))\cap B_+ = V(\sigma^c(f)) \setminus V(x') \) is smooth. Hence, resolution is achieved in a single cobordant blow-up.
\end{example}\subsubsection{Generalizations}

The previous example extends naturally to more general forms:
\begin{example}
Let $X = \Spec k[x_1, \dots, x_n]$, and assume a decomposition of the coordinates into disjoint subsets:
\[
\overline{x}_1, \overline{x}_2, \dots, \overline{x}_k,
\]
such that each $\overline{x}_i$ corresponds to a homogeneous polynomial $F_i(\overline{x}_i)$ of degree $a_i$, with $a_1 < a_2 < \dots < a_k$, and
the ideal $D^{a_i - 1}_{\overline{x}_i}(F_i)$,
generated by the derivatives of $F_i$ in $\overline{x}_i$ of order $a_i-1$ is equal to $
D^{a_i - 1}_{\overline{x}_i}(F_i) = (\overline{x}_i).
$
Let $f = F_1 + \dots + F_k$ and define $\cI = (f)$ and $R_1 = \cO_X[f t]$.

\paragraph{\bf Recursive Construction.} 
We recursively construct Rees algebras by taking the coefficient ideal:
\[
R_{i+1} = C_{\overline{x}_i t^{1/a_i}}(R_i) = \cO_{H_i}\left[ f_{|H_i} \cdot t \right] ,
\]
where $H_i = V_{H_i}(\overline{x}_i)$ is defined by the  maximal contact, and $f_{|H_i} = F_{i+1} + \dots + F_k$ is the restriction of $f_{|H_{i-1}}$ to $H_i$, which appears as the only coefficient in the graded presentation of the generator $f_{|H_{i-1}} t$ of $R_i$:
\[
f_{|H_{i-1}} t = F_i(\overline{x}_i t^{1/a_i}) + f_{|H_i} t,
\]
and this decomposition leads naturally to the next step of the process.

\paragraph{\bf Final Step.} Eventually, we obtain $R_{k+1}=0$ and
\[
  \cA = \cO_X\left[ \overline{x}_1 t^{1/a_1}, \dots, \overline{x}_k t^{1/a_k} \right]^{\mathrm{int}},\quad \text{with the invariant:}
\]
\[
\inv_p(f) = (\underbrace{a_1, \dots, a_1}_{\#\overline{x}_1}, \dots, \underbrace{a_k, \dots, a_k}_{\#\overline{x}_k}).
\]

\paragraph{\bf Cobordant Blow-Up.} The full cobordant blow-up $B$ along the extended algebra:
\[
\cA^{\ext} = \cO_X\left[t^{-1/w}, \overline{x}_1 t^{1/a_1}, \dots, \overline{x}_k t^{1/a_k} \right], \quad w = \mathrm{lcm}(a_1, \dots, a_k),
\]
is given by:
\[
\cO_B = \left( \cA^{\ext} \right)^w = \cO_X\left[t^{-1}, \overline{x}_1 t^{w_1}, \dots, \overline{x}_k t^{w_k} \right], \quad w_i = w/a_i.
\]

The controlled transform of $f$ is:
\[
\sigma^c(f) = t^w f = F_1(\overline{x}_1') + \dots + F_k(\overline{x}_k'), \quad \text{where } \overline{x}_i' = t^{w_i} \overline{x}_i.
\]
which is improved on  $B_+ = B \setminus V(\overline{x}_1', \dots, \overline{x}_k')$ due to the semicontinuity of the invariant.

\end{example}

\subsubsection{Varieties with a divisorial SNC structure}

\begin{example} \label{div2}
Consider the hypersurface singularity:
\[
f = (x_1 + x_2)^2 + x_3^7,
\]
where $x_1, x_2$ are divisorial coordinates at the origin and $x_3$ is free. The resolution proceeds similarly to the free case, but the construction of maximal contact reflects the divisorial structure.

Set $R_1 := \cO_X[ft]$, with $\ord_0(f) = 2$. The cotangent ideal is
\[
T^{1/2}(R_1) = D(f) = ((x_1 + x_2), x_3^6).
\]

As $T^{1/2}(R_1)$ contains no free coordinates the maximal contact for $R_1$ in gradation $t^{1/2}$ is given by the divisorial coordinates $(x_1,x_2)$ containing   $\inn_0(T^{1/2}(R_1))= \left((x_1 + x_2)\right)$. Set $H_1=V(x_1,x_2)$.
Expand $ft$ with respect to $(x_1,x_2)t^{1/2}$:
\[
ft = (x_1 + x_2)^2 t + x_3^7 t = (x_1 t^{1/2}+x_2 t^{1/2})^2 + x_3^7t.
\]
So the coefficient algebra becomes
$
R_2 := C_{x_1t^{1/2}}(R_1) = \cO_{H_1}[x_3^7t],
$
and  has order $7$, and $x_3$ gives maximal contact in degree $t^{1/7}$, with
$
R_3 := C_{x_3t^{1/7}}(R_3) = 0$
giving  
 the maximal admissible center $ \cA=\cO_X[(x_1,x_2)t^{1/2}, x_3t^{1/7})^{\inte}]$ with $\lcm(2,7) = 14$, with  the extended Rees algebra:
\[
\cA^\ext = \cO_X[t^{-1/14}, (x_1,x_2) t^{1/2}, x_3 t^{1/7}],
\]
and the full cobordant blow-up obtained by the rescaling $t\mapsto t^{14}$:
\[
B = \cO_X[t^{-1}, x_1 t^{7}, x_2 t^7, x_3 t^2].
\]
Since $x_1, x_2$ are divisorial, and $x_3$ is free, we conclude:
\[
\inv_0(f) = (2_+, 2_+, 7).
\]

On $B_+ := B \setminus V(x_1', x_2', x_3')$, define $f' := t^{14} f = (x_1' + x_2')^2 + (x_3')^7$. Setting $u := x_1' + x_2'$, we find the new maximal center:
we get
\[
f' = u^2 + (x_3')^7,\quad \inv_{p'}(f') = (2,7) < \inv_0(f).
\]
with
$
\cA' = \cO_{X'}[(u^{1/2}, x_3'^{1/7})^{\inte}].
$
A second cobordant blow-up at $\cA'$ resolves the singularity .

\end{example}
\subsection{Final Conclusions} \label{Final}

\subsubsection{Functorial Principalization in the SNC Setting}
\smallskip
A functorial principalization in the non-SNC setting was previously considered in \cite{ATW-weighted}.

Let \( \mathcal{I} \) be an ideal on a smooth variety \( X \). We initiate the SNC resolution algorithm via a sequence of cobordant blow-ups
\[
\sigma_i: X_{i+1} \to X_i,
\]
performed at the maximal \( \mathcal{I}_i \)-admissible centers \( \mathcal{A}(\mathcal{I}_i) \subset X_i \), where each \( \mathcal{I}_i \) is the controlled transform of \( \mathcal{I}_{i-1} \). This yields a sequence:
\[
X = X_0 \xleftarrow{\sigma_0} X_1 \xleftarrow{\sigma_1} \cdots \xleftarrow{\sigma_{k-1}} X_k = X',
\]
with \( \mathcal{I}_0 := \mathcal{I} \) and \( \mathcal{I}_{i+1} := \sigma_i^c(\mathcal{I}_i) \).

At each step, the invariant strictly decreases:
\[
\max \inv_{X_i}(\mathcal{I}_i) > \max \inv_{X_{i+1}}(\mathcal{I}_{i+1}).
\]
By the descending chain condition (dcc), this process terminates in finitely many steps, at which point:
\[
\max \inv_{X_k}(\mathcal{I}_k) = 0.
\]

Hence, the final controlled transform
\[
\sigma^c(\mathcal{I}) := \sigma_{k-1}^c \circ \cdots \circ \sigma_0^c(\mathcal{I}) = \mathcal{O}_{X_k}
\]
is trivial, and the full transform \( \mathcal{O}_{X_k} \cdot \mathcal{I} \) is locally monomial, generated by the $T$-invariant product $\epsilon\cdot x^\alpha$ of the semiinvariant equations of the components of the exceptional divisor and a semiinvariant unit $\epsilon$.

Moreover, each cobordant blow-up \( X_{i+1} \to X_i \) naturally carries an induced torus action \( T_{i+1} = T_i \times \mathbb{G}_m \). Since all constructions-centers, ideals, and blow-ups-are canonical and functorial under smooth morphisms, they remain \( T_i \)-stable. This proves Theorem~\ref{principalization}.

\subsubsection{Embedded Desingularization}
\smallskip
See also \cite{ATW-weighted} for the non-SNC case.

To resolve the singularities of an irreducible subvariety \( Y \subset X \) of codimension \( k \), we consider the principalization of its ideal \( \mathcal{I} = \mathcal{I}_Y \subset \mathcal{O}_X \). The algorithm proceeds via cobordant blow-ups at maximal \( \mathcal{I} \)-admissible centers \( \mathcal{A} = \mathcal{A}(\mathcal{I}) \).

In the embedded setting, we apply strict transforms of the ideal \( \mathcal{I} = \mathcal{I}_Y \) at each step. The algorithm stops once the invariant reaches the value
\[
\max \inv_X(\mathcal{I}) = (1, \ldots, 1),
\]
with \( k \) entries equal to 1. In this case, the corresponding maximal admissible center satisfies \( \mathcal{A} = \mathcal{O}_X[\mathcal{I}_{Y'} t] \), where \( Y' \) is the strict transform of \( Y \). By Section~\ref{invv}, it follows that \( Y' \) is smooth and has simple normal crossings (SNC) with the exceptional divisor, which is also SNC.

Alternatively, one may run the algorithm using controlled transforms of ideals instead of strict transforms. This version of the process is valid but typically slower, and some centers may not lie entirely within the strict transform of \( Y \).

Both approaches confirm the existence of functorial embedded desingularization by smooth cobordant blow-ups, as stated in Theorem~\ref{embedded}. The outcome is a smooth subvariety \( Y' \subset X' \) having SNC with the SNC exceptional divisor \( E' \), all compatible with an induced torus action.

At each step, the cobordant blow-up \( X_{i+1} \to X_i \) extends the torus action via \( T_{i+1} = T_i \times \mathbb{G}_m \). Since the algorithm is canonical and functorial for smooth morphisms, all centers and strict transforms \( Y_i \subset X_i \) are \( T_i \)-stable.

Passing to geometric quotients yields a sequence of weighted blow-ups,
\[
X = X_0 \xleftarrow{\sigma_0} X_1 / T_1 \xleftarrow{\sigma_{1/T_1}} \cdots \xleftarrow{\sigma_{k-1/T_{k-1}}} X_k / T_k = X' / T,
\]
where \( Y' / T \subset X' / T \) has only quotient singularities. Considering the stack-theoretic quotients \( [X_i / T_i] \), we obtain smooth stacks with a smooth substack \( [Y' / T] \subset [X' / T] \) having SNC with the exceptional divisor. A related approach without requiring SNC exceptional divisors was studied in \cite{ATW-weighted}.

\subsubsection{Nonembedded SNC Resolution}

The nonembedded resolution is derived from the embedded resolution by using local embeddings and functoriality. It produces a resolution with an SNC exceptional divisor.

We define a modified invariant \( \widetilde{\inv}_p(Y) \) for a variety \( Y \) over \( K \), initially without any divisor, as follows. For a closed point \( p \in Y \), embed \( Y \) locally into a smooth variety \( X \). Any two such embeddings \( Y \subset X_1 \) and \( Y \subset X_2 \) into smooth varieties of the same dimension are \'etale equivalent. If \( \dim(X_1) + m = \dim(X_2) \) for some \( m \geq 0 \), then the induced embeddings \( Y \subset X_1 \subset \mathbb{A}^m_{X_1} \) and \( Y \subset X_2 \) are \'etale equivalent. Here, the embedding
\[
X_1 \subset \mathbb{A}^m_{X_1} = \Spec(\mathcal{O}_{X_1}[x_1, \ldots, x_m])
\]
is defined by \( V(x_1, \ldots, x_m) \).

For embeddings \( Y \subset X_1 = V(x_1, \ldots, x_m) \subset \mathbb{A}^m_{X_1} \) and \( Y \subset X_2 \)  with \( \dim(X_1) + m = \dim(X_2) \), we have that  $(x_1, \ldots, x_m)\subset \cI_Y$ and the tuple \( (x_1, \ldots, x_m) \) forms a partial maximal contact of $\cO_{\mathbb{A}^m_{X_1}}\cdot \cI_Y\cdot  t$ in gradation \( t \). Passing to the completion
 the maximal admissibility condition
\[
\widehat{\mathcal{O}}_{\mathbb{A}^m_{X_1},p} \cdot \cI_Y\cdot t = \widehat{\mathcal{O}}_{\mathbb{A}^m_{X_1},p}[(x_1, \ldots, x_m)+ \cI_{Y|X_1}]\cdot t \subset \widehat{\cA}= \widehat{\mathcal{O}}_{\mathbb{A}^m_{X_1},p} [(x_1, \ldots, x_m)t+\cA_{|X_1}],
\]
 is equivalent to the maximal admissibility condition \( \widehat{\mathcal{O}}_{X,p}\cI_{Y|X_1}t \subset \mathcal{A}_{|X_1} \). Therefore,
\[
\inv_p(\cI_{Y,\mathbb{A}^m_{X_1}}) = (1, \ldots, 1, \inv_p(\cI_{Y|X_1})) = (1, \ldots, 1, \inv_p(\mathcal{I}_{Y,X_1})).
\]
By functoriality,
\[
\inv_p(\mathcal{I}_{Y,X_2}) = \inv_p(\mathcal{I}_{Y,\mathbb{A}^m_{X_1}}) = (1, \ldots, 1, \inv_p(\mathcal{I}_{Y,X_1})).
\]

For a fixed embedding \( Y \subset X \), let \( \inv_p(\mathcal{I}_Y) = (b_1, \ldots, b_k) \) and \( \dim(X) = n \). We define the invariant \( \widetilde{\inv}_p(Y) \) to be the equivalence class of sequences \( (b_1, \ldots, b_k)_n \), indexed by $n$, where two sequences are equivalent if
\[
(b_1, \ldots, b_k)_n \sim (1, \ldots, 1, b_1, \ldots, b_k)_{n+m},
\]
with \( m \) additional 1s at the front. This invariant is functorial and independent of the choice of embedding. Comparison between equivalence classes is done lexicographically after fixing representatives with the same ambient dimension \( n \).

\begin{lemma}\label{compp}
Let \( Y \subset X_1 \), \( Y \subset X_2 \) be two embeddings. Let \( \mathcal{A}_1 \) and \( \mathcal{A}_2 \) be maximal admissible centers for \( \mathcal{I}_{Y,X_1} \) and \( \mathcal{I}_{Y,X_2} \), respectively. Let
\[
B_{1+} \to X_1, \quad B_{2+} \to X_2
\]
be the corresponding cobordant blow-ups. Then,
\[
\mathcal{A}_{1|Y} = \mathcal{A}_{2|Y},
\]
and the restrictions of \( B_{1+} \to X_1 \) and \( B_{2+} \to X_2 \) to the strict transform of \( Y \) coincide with the cobordant blow-up
\[
B_{Y+} \to Y
\]
of the common center \( \mathcal{A}_{1|Y} = \mathcal{A}_{2|Y} \).
\end{lemma}

\begin{proof}
Using an argument as in \cite[Lemma 2.5.3]{Wlodarczyk}, one can embed both \( X_1 \) and \( X_2 \) into an affine space \( \mathbb{A}^N \) such that the induced embeddings of \( Y \) into \( \mathbb{A}^N \) coincide. This reduces the situation to the case \( X_1 \subset X_2 = \mathbb{A}^N \), where
\[
\mathcal{I}_{Y,X_1} = \mathcal{I}_{Y,X_2|X_1}, \quad \text{and} \quad \mathcal{A}_1 = \mathcal{A}_{2|X_1}.
\]
Hence \( \mathcal{A}_{1|Y} = \mathcal{A}_{2|Y} \), and the induced morphisms on the strict transforms of \( Y \) agree, as they are both equal to the cobordant blow-up of the same center.
\end{proof}

To construct the resolution, we run the embedded algorithm for \( Y \) using centers associated with \( \max \widetilde{\inv} \). Consider an affine open cover \( \{Y^j\} \) of \( Y \), with closed embeddings \( Y^j \subset X^j \), where all \( X^j \) are smooth and of the same dimension. Define the disjoint unions:
\[
\overline{Y} := \coprod Y^j, \quad \overline{X} := \coprod X^j,
\]
with a closed embedding \( \overline{Y} \subset \overline{X} \), and \'etale projection \( \overline{Y} \to Y \).

Apply embedded desingularization with strict transforms to the pair \( \overline{Y} \subset \overline{X} \). This gives a sequence of cobordant blow-ups
\[
\overline{X} = \overline{X}_0 \xleftarrow{\sigma_0} \overline{X}_1 \xleftarrow{\sigma_1} \cdots \xleftarrow{\sigma_{k-1}} \overline{X}_k = \overline{X}',
\]
with corresponding strict transforms of the ideals \( \mathcal{I}_{\overline{Y}_i} \), and
\[
\overline{Y} = \overline{Y}_0 \xleftarrow{\sigma_{0|\overline{Y}}} \overline{Y}_1 \xleftarrow{\sigma_{1|\overline{Y}}} \cdots \xleftarrow{\sigma_{k-1|\overline{Y}}} \overline{Y}_k = \overline{Y}'.
\]
This sequence descends to a sequence of cobordant blow-ups on \( Y \):
\[
Y = Y_0 \xleftarrow{\sigma_{0Y}} Y_1 \xleftarrow{\sigma_{1Y}} \cdots \xleftarrow{\sigma_{k-1Y}} Y_k = Y'.
\]

The invariant \( \max \widetilde{\inv} \) drops to the minimal value \( (1, \ldots, 1)_n \) on \( \overline{Y}' \) and \( Y' \), where \( n = \dim(\overline{X}) \), and the number of 1's equals the codimension of \( \overline{Y} \subset \overline{X} \). Since the exceptional divisor on \( \overline{X}' \) is SNC and transverse to \( \overline{Y}' \), its restriction defines an SNC divisor on \( \overline{Y}' \). The result descends to \( Y' \), completing a functorial nonembedded SNC resolution of \( Y \), as stated in Theorem~\ref{nonembedded}.

A related non-SNC version of the resolution was studied in \cite{ATW-weighted}.

\section{Resolution of Almost Homogeneous Singularities in Arbitrary Characteristic} \label{free}

 \subsection{The Weighted Normal Bundles to the Centers}

\subsubsection{Weighted Normal Bundle} \label{cone}

Let \( X \) be a regular scheme, and let \( \cJ = (u_1^{1/w_1}, \ldots, u_k^{1/w_k}) \) be a local presentation of a \( \QQ \)-ideal. Define the associated Rees algebra:
\[
\cA_{\cJ} = \cO_X[t^{-1}, u_1 t^{w_1}, \ldots, u_k t^{w_k}]^\inte.
\]
This induces a filtration \( \{ \cA_{\cJ,a} \}_{a \in \ZZ_{\geq 0}} \) on \( \cO_X \), with \( \cA_{\cJ,a} = (\cJ^a)_X \).

The associated graded algebra is the sheaf:
\[
\gr_{\cJ}(\cO_X) = \bigoplus_{a \geq 0} \frac{(\cJ^a)_X}{(\cJ^{a+1})_X} t^a,
\]
which can also be realized as:
\[
\gr_{\cJ}(\cO_X) = \cA_{\cJ}^{\ext}/(t^{-1} \cdot \cA_{\cJ}^{\ext}) = \cO_X[t^{-1}, u_1 t^{w_1}, \ldots, u_k t^{w_k}]/(t^{-1})=\cO_{V(\cJ)}[u_1t^{w_1},\ldots,u_kt^{w_k}].
\]
We define the {\it weighted normal bundle} of \( \cJ \) as:
\[
N_{\cJ}(X) := \Spec(\gr_{\cJ}(\cO_X)).
\]

\begin{remark}
A similar construction appears independently in \cite{Rydh-proj}.
\end{remark}

\subsubsection{Ideal of Initial Forms}

Given \( f \in \cO_{X,p} \), with \( f \in (\cJ^a)_X \setminus (\cJ^{a+1})_X \), we define the {\it initial form}:
\[
\inn(f) \in \left( \frac{(\cJ^a)_X}{(\cJ^{a+1})_X} \right) t^a \subset \gr_{\cJ}(\cO_X).
\]
For an ideal sheaf \( \cI \subset \cO_X \), define the filtration \( \cI_a := \cI \cap (\cJ^a)_X \), and the ideal of initial forms:
\[
\inn(\cI) := \bigoplus_{a \geq 0} \frac{\cI_a + (\cJ^{a+1})_X}{(\cJ^{a+1})_X} t^a \subset \gr_{\cJ}(\cO_X).
\]

\subsubsection{Exceptional Divisor and Weighted Normal Bundle}

\begin{lemma}\label{grad1} (Figure \ref{F3})
Let \( \sigma: B \to X \) be the full cobordant blow-up of the center \( \cJ \). Then the exceptional divisor is:
\[
V_B(t^{-1}) = \Spec_X(\cO_B/(t^{-1})) \cong N_{\cJ}(X).
\]
\qed
\end{lemma}

\begin{figure}[h]
\centering
\begin{tikzpicture}[scale=1.5, yscale=-1, every node/.style={font=\small}, >=Stealth]

  \draw[thick,magenta] (-0.2, 5) -- (4.2, 5) node[above left, magenta] {\Large $X$};
  \draw[very thick, blue]  (0, 0) -- (0, 4) ;
\node[blue] at (0,4.2) {$V$-vertex};
\node[blue] at (-0.2,3.6) {$t^{-1}$};
  \draw[->, thick, blue] (0, 2.1) -- (0, 1.9);

  \filldraw[magenta ] (0,5) circle (1.2pt) node[below right,magenta] {$J$-center};

  \draw[very thick, ->, green!50!black] (0,0) -- (4,0);
  \node[green!50!black] at (2,-0.3) {$\pi_A^{-1}(0)=D = N_{J}(X)$-exc. divisor-Normal bundle};
 \node[green!50!black] at (4.2,0){x'};

 \draw[very thick, green!50!black,dashed] (0,3) -- (4,3);
  \node[green!50!black] at (2,3.3){$\pi_A^{-1}(p)\simeq X$};
   \filldraw[black] (-1.25,3) circle (1.2pt) node[below right,black] {$p$};
  \draw[very thick] (-1.25,0) -- (-1.25,4);
  \filldraw[black] (-1.25,0) circle (1pt) node[above right] {\small 0};
  \node[black] at (-1,4.5) {$\mathbb{A}^1=\Spec(K[t^{-1}])$};
\node[black] at (-1,3.5) {$\mathbb{A}^1$};
  \foreach \a in {0.3, 0.6, 1.0} {
    \draw[thick, domain=0.3:3.8, smooth, variable=\x, postaction={decorate},
      decoration={markings, mark=at position 0.55 with {\arrow{>}}}] 
      plot ({\x}, {\a/\x});
  }

  \draw[->, thick, magenta] (1.5, 2.6) -- (0.8, 4.6);
  \node[magenta] at (2.5,4.4) {$\pi: B \to X = B //\! \mathbb{G}_m$};
  \node[magenta] at (2.5,4.1) {\scriptsize morphism along orbits};
  \node[magenta] at (2.5,3.9) {\scriptsize quotient};

 \draw[->, thick] (-0.3, 0) -- (-0.8, 0);
  \draw[->, thick] (-0.3, 1) -- (-0.8, 1);
  \node at (-0.6,1.5)  {$\pi_A:B \to \mathbb{A}^1$};
  \draw[->, thick] (-0.3, 3) -- (-0.8, 3);

  \node at (-0.6,2) {\scriptsize projection};

\node at (3, 2.2) {\Large $B$};

\end{tikzpicture}

\caption{The cobordant blow-up $\pi: B \to X = B /\ \mathbb{G}_m$ of the center $\mathcal{J}$, together with the secondary projection $\pi_A: B \to \mathbb{A}^1$, which describes $B$ as a deformation space with special fiber the normal bundle $D = N_{\mathcal{J}}(X)$.}
\label{F3}

\end{figure}
\subsubsection{Strict Transform and Initial Form Ideal}

\begin{lemma}\label{grad2}( see also Figure \ref{F4} and Lemma \ref{normal})
Let \( \cI \subset \cO_X \) be an ideal sheaf. Then under the identification \( \cO_B/(t^{-1}) \cong \gr_{\cJ}(\cO_X) \), the restriction \( \sigma^s(\cI)_{|V(t^{-1})} \) of the strict transform \( \sigma^s(\cI) \) to \( V(t^{-1}) \subset B \) corresponds to \( \inn(\cI) \subset \gr_{\cJ}(\cO_X) \).
\end{lemma}

\begin{proof}
For \( f \in \cI \cap (\cJ^a)_X \setminus (\cJ^{a+1})_X \), the strict transform is \( \sigma^s(f) =  ft^a \in \cO_B \). Its reduction modulo \( t^{-1} \) gives:
\[
\sigma^s(f)=ft^a +(\cO_Bt^{-1}\cap \cO_Xt^a)=ft^a+((\cJ^{a+1})_X)t^a=\inn(f) \in \left( \frac{(\cJ^a)_X}{(\cJ^{a+1})_X} \right) t^a ,
\]
hence the identification.
\end{proof}

\subsubsection{Weighted Normal Cone}

The notion of a weighted normal bundle extends to $\ZZ_{\geq 0}$-graded Rees algebras \( \cR = \bigoplus_{a \geq 0} \cR_a t^a \subset \cO_Y[t] \) on a scheme $Y$. Define:
\[
\gr_{\cR}(\cO_Y) = \bigoplus_{a \geq 0} \frac{\cR_a}{\cR_{a+1}} t^a,
\]
and the {\it weighted normal cone}  of \( X \) at \( \cR \) by:
\[
C_{\cR}(Y) := \Spec_{V(\cR)}(\gr_{\cR}(\cO_Y)).
\]

\begin{definition}
Let \( Y \subset X \) be a closed integral subscheme of a regular scheme with ideal sheaf \( \cI_Y \), and let \( \cJ \) be a center on \( X \) with \( V(\cJ) \subset Y \). Define  the {\it weighted normal cone of \( Y \) at \( \cJ \)} as:
\[
C_{\cJ}(Y) := \Spec_{V(\cJ)}(\gr_{\cO_Y \cdot \cA_{\cJ}}(\cO_Y)).
\]
\end{definition}

\begin{lemma}\label{normal}
With the above notation, \( C_{\cJ}(Y) \subset N_{\cJ}(X) \) is defined by the ideal \( \inn(\cI_Y) \subset \gr_{\cJ}(\cO_X) \).
\end{lemma}

\begin{proof}
The morphism \( \phi: \cA_{\cJ} \to \cO_Y \cdot \cA_{\cJ} \) has kernel generated by \( (\cI_Y \cdot \cO_X[t]) \cap \cA_{\cJ} \). Thus,
\[
\gr_{\cO_Y \cdot \cA_{\cJ}}(\cO_Y)) = \bigoplus_a \frac{(\cJ^a)_X}{(\cJ^{a+1})_X + (\cI_Y \cap (\cJ^a)_X)} t^a,
\]
whose kernel in \( \gr_{\cJ}(\cO_X) \) is exactly \( \inn(\cI_Y) \).
\end{proof}

\begin{figure}[ht]
\begin{tikzpicture}[scale=1.5, yscale=-1, every node/.style={font=\small}, >=Stealth]

  \draw[->, thick,magenta] (-0.2, 3.5) -- (3.2, 3.5);
  \draw[very thick,black] (0, 3.5) -- (0.3, 3.5) node[above ,black] {$Y$};

  \draw[very thick, blue]  (0, 0) -- (0, 3) node[below left, blue] {$V$};

 \fill[violet!30, pattern=north east lines] (0,0) rectangle (0.3,3);
 \draw[very thick, violet] (0,0) -- (0.3,0);

  \draw[->, thick, violet!80!black] (0.6, 1.5) -- (0.3, 1.5);
  \node[violet!90!black] at (1,1.5) {\large $\sigma^s({Y})$};


  \node[right] at (0.6,2.4) {$Y \times \mathbb{G}_m=\sigma^s({Y})\cap B_-$};
 \draw[->, thick, violet!80!black] (0.5, 2.4) -- (0.3, 2.4);
  \draw[->, thick, blue] (0, 2.1) -- (0, 1.9);

  \filldraw[magenta ] (0,3.5) circle (1.2pt) node[below left,magenta] {$J$};

  \draw[very thick,black,dashed] (0,0) -- (0.3,0);
  \draw[very thick, ->, green!50!black] (0,0) -- (3,0);
  \node[green!50!black] at (3,-0.2) {$D = N_{J}(X)$};
   \node[black] at (3.2,3.8) {$X$};
   \node[black] at (0.5,-0.5) {Normal Cone of $Y$ at $\cJ$};
   \node[black] at (0.5,-0.2) {$C_{\cJ}(Y)=\sigma^s({Y})\cap D$};
 \draw[->,thick ] (0.4,-0.1)-- (0.1,0);

     \node[black] at (0.2,3.1) {$Y$};

  \draw[very thick] (-1.25,0) -- (-1.25,3);
  \filldraw[black] (-1.25,0) circle (1pt) node[above right] {\small 0};
  \node[black] at (-1,3.4) {$\mathbb{A}^1$};

 \draw[->, thick] (-0.5, 1.5) -- (-0.9, 1.5);

\node at (2, 1.2) {\Large $B$};
\draw[very thick, ->, magenta] (2,2.8) -- (2,3.2);
\end{tikzpicture}
\caption{The strict transform $\sigma^s(Y)\subset B$ of  $Y \subset X$ under the full cobordant blow-up. The transform decomposes into the product $Y \times \mathbb{G}_m$ over $B_- = X \times \mathbb{G}_m$ and the weighted normal cone $C_{\cJ}(Y) = \sigma^s(Y) \cap D$ inside the exceptional divisor $D = N_{\cJ}(X)$.}
\label{F4}
\end{figure}

\begin{remark}\label{deformation}
(See also \cite{Rydh-proj}.)

Assume \( X \) is a variety over an algebraically closed field \( K \), and let \( B \to X \) be the cobordant blow-up along a center \( \mathcal{J} \). Then the secondary projection
\[
\pi: B \to \mathbb{A}^1 = \Spec K[t^{-1}]
\]
defines a flat degeneration of \( X \) to its weighted normal cone along \( \mathcal{J} \). Indeed, for \( a \neq 0 \),
$
\pi^{-1}(a) \cong X,
$
while the special fiber
$
\pi^{-1}(0) = V_B(t^{-1})
$
is the weighted normal cone \( N_{\mathcal{J}}(X) \).
(See Matsumura \cite[Exercise 5, p.~176]{Mat} for an algebraic construction of this degeneration via the Rees algebra.)

The normal cone can be viewed as an infinitely stretched version of the infinitesimal neighborhood of \( \mathcal{J} \), realized via a torus action (See Figures~\ref{F3},\ref{F4}). 

Similarly, for the strict transform \( C = V(\sigma^s(\mathcal{I})) \), the restriction
$
\pi_C: C \to \mathbb{A}^1
$
gives a deformation of \( Y \) to its weighted normal cone
$
C_{\mathcal{J}}(Y) = \pi_C^{-1}(0) 
$.
\end{remark}
\subsection{Almost Homogeneous Singularities and Their Resolution} \label{almost}

Let $X$ be a regular scheme and $\cI$ an ideal. Denote by
\[
\Sing(V(\cI)) := \Sing(\Spec_X(\cO_X/\cI))
\]
the singular locus of the subscheme $V(\cI) \subset X$.

\begin{definition}
Let $Y \subset X$ be an integral closed subscheme defined by $\cI_Y$. A regular subscheme $Z \subset Y$ is called an \emph{almost homogeneous singularity} of $Y$ if:
\begin{enumerate}
  \item $\Sing(Y) = Z$.
  \item There exists a center $\cJ$ on $X$ such that $V(\cJ) = Z$ and the weighted normal cone $C_{\cJ}(Y) \subset N_{\cJ}(X)$ satisfies
  \[
  \Sing(C_{\cJ}(Y)) = V(\inn(\cJ)) = Z.
  \]
\end{enumerate}
\end{definition}

\begin{theorem} \label{Homoge1}
Let $X$ be a regular scheme and $Y \subset X$ an integral closed subscheme with almost homogeneous singularity $Z \subset Y$ for a center $\cJ$. Let $C_{\cJ}(Y) \subset N_{\cJ}(X)$ be the weighted normal cone. If either:
\begin{itemize}
  \item $X$ is universally catenary, or
  \item every component of $C_{\cJ}(Y) \setminus Z$ has codimension equal to $\codim_X(Y)$,
\end{itemize}
then the cobordant blow-up $B_+ \to X$ at $\cJ$ resolves $Y$: the strict transform $Y' \subset B_+$ is regular of codimension equal to $\codim_X(Y)$.
\end{theorem}

\begin{proof}
We may work locally on $X$, so let $X = \Spec A$ be affine and regular, with the center $\cJ$ locally defined as
\[
\cJ = (u_1^{1/w_1}, \ldots, u_k^{1/w_k}) \subset A.
\]
Let $Y = V(\cI_Y) \subset X$ be defined by an ideal $\cI_Y \subset A$, with singular locus $Z = V(\cJ) = \Sing(Y)$. Let $B = \Spec A[t^{-1}, t^{w_1}u_1, \ldots, t^{w_k}u_k]$ be the full cobordant blow-up, and let $B_+ = B \setminus V(t^{-1})$.

Let $Y' = \overline{\sigma^{-1}(Y \times \mathbb{G}_m)}$ be the strict transform of $Y$ in $B$. Since $B_- := X \times \mathbb{G}_m$ is dense in $B$ and $\sigma^{-1}(Y \times \mathbb{G}_m) \subset B_-$ has codimension $d = \codim_X(Y)$, we have:
\[
\codim_B(Y') = d.
\]

Now consider $Y' \cap V(t^{-1}) \subset B$. This corresponds to the initial ideal $\inn_{\cJ}(\cI_Y)$ in the Rees algebra graded ring $\gr_{\cJ}(A) \cong A / \cJ[t^{w_1}u_1, \ldots, t^{w_k}u_k]$. Then:
\[
Y' \cap V(t^{-1}) = V(\inn_{\cJ}(\cI_Y)) \subset N_{\cJ}(X),
\]
the weighted normal cone. By assumption, $\Sing(C_{\cJ}(Y)) = Z = V(\cJ)$, and all other points of $C_{\cJ}(Y)$ are regular. Thus:
\[
\Sing(Y' \cap V(t^{-1})) \subseteq V(u_1t^{w_1}, \ldots, u_kt^{w_k}) = V_B(t^{-1}) \cap V_B(u_1', \ldots, u_k').
\]

Therefore, any point $p \in Y' \cap V(t^{-1}) \setminus V_B(u_1', \ldots, u_k')$ is regular. Since $t^{-1}$ is a non-zero divisor on $\cO_B/\cI_{Y'}$, and $Y' \cap V(t^{-1})$ has codimension $d+1$ in $B$, the regularity of $Y'$ at such $p$ follows.

Next, consider points in $Y' \setminus V(t^{-1}) = \sigma^{-1}(Y \setminus Z) \subset B_-$. Since $Y$ is regular outside $Z$ and the blow-up is an isomorphism over $X \setminus Z$, it follows that $Y'$ is regular at these points as well.

Thus, $\Sing(Y') \subseteq V_B(u_1', \ldots, u_k') \cap V(t^{-1})$, and $Y'$ is regular on $B_+ := B \setminus V_B(u_1', \ldots, u_k')$.

Hence, $Y' \subset B_+$ is regular of codimension $d = \codim_X(Y)$, completing the proof.
\end{proof}

\begin{example}
Let $Y \subset X = \AA^2_{\ZZ} \setminus V(k)$, where $k, p \in \ZZ$, $k > p$, $p$ is prime, and $p \nmid k$, be a scheme over $\Spec(\ZZ)$ defined by
\[
f = x^p + p^p + y^k \in \ZZ[1/k][x, y].
\]
Then
\[
\Sing(f) \subset V\left(f, \frac{\partial f}{\partial x}, \frac{\partial f}{\partial y}\right)
= V(x^p + p^p, y, px^{p-1})
= V(x, y, p).
\]

Make the coordinate change $x' := x + p$, so $x = x' - p$. Then
\[
f = {x'}^p - p \cdot p {x'}^{p-1} + \ldots + p \cdot p^{p-1} x' + y^k.
\]

Now choose weights satisfying $p w_1 = p w_2 + w_1 = k w_3$, and set
\[
\cJ = \left((x')^{1/w_1}, p^{1/w_2}, y^{1/w_3}\right).
\]
Then
\[
\inn(f) = \inn_{\cJ}(f)
= (x')^p - p^p x' + y^k \in \ZZ_p[x', y, p].
\]

Thus,
\[
\Sing(\inn(f)) \subset V\left(\inn(f), \frac{\partial (\inn(f))}{\partial x'}, \frac{\partial (\inn(f))}{\partial y}\right)
= V((x')^p - p^p, -p^p, ky^{k-1})
= V(\cJ).
\]

Therefore, $f$ defines an almost homogeneous singularity, and a single cobordant blow-up resolves it.

However, the naive choice $\cJ_1 = (x^{1/k}, p^{1/k}, y^{1/p})$ fails: here,
\[
\inn(f) = x^p + p^p + y^k
\]
has
\[
\Sing(\inn(f)) = V(x + p, y),
\]
which strictly contains $V(\cJ_1)$. This explains the need for the coordinate change and the appropriate choice of weights.
\end{example}

\subsection{Main Resolution Principle in Arbitrary Characteristic}

The method described generalizes resolution techniques known in characteristic zero and applies to invariants defined in any characteristic.

Let $\cI$ be an ideal on a regular scheme $X$, and let $p \mapsto \Inv_p(\cI)$ be a local invariant with values in a totally ordered set $\Gamma$, satisfying:
\begin{itemize}
  \item \textbf{(Restriction)}: For a regular subscheme $Y \subset X$,  
  $\Inv_p(\cI_{|Y}) \geq \Inv_p(\cI)$.
  \item \textbf{(Product)}: For a regular scheme $Z$ and $p' \in X \times Z$ lying over $p \in X$,  
  $\Inv_{p'}(\cO_{X \times Z} \cdot \cI) = \Inv_p(\cI)$.
\end{itemize}

\begin{theorem} \label{Homoge3}
Assume there exists a weighted center $\cJ$ and a value $\Phi \in \Gamma$ such that for a given ideal $\cI$ on $X$:
\begin{enumerate}
  \item $V(\cJ) \subset X$ lies in the locus where $\Inv(\cI) \geq \Phi$.
  \item $V(\cJ) \subset N_{\cJ}(X)$ lies in the locus where $\Inv(\inn_{\cJ}(\cI)) \geq \Phi$.
\end{enumerate}
Then the cobordant blow-up $\sigma_+: B_+ \to X$ of $\cJ$ satisfies:
\[
\Inv_{B_+}(\sigma^s(\cI)) < \Phi.
\]
\end{theorem}

\begin{proof}
Let $\cJ = (x_1^{1/w_1}, \ldots, x_k^{1/w_k})$. Consider $q \in B_+$. There are two cases:
\begin{itemize}
  \item If $q \in V(t^{-1}) \setminus V(x'_1, \ldots, x'_k)$, then:
  \[
  \Inv_q(\sigma^s(\cI)) \leq \ord_q(\sigma^s(\cI)_{|V(t^{-1})}) = \Inv_q(\inn_\cJ(\cI)) < \Phi.
  \]
  \item If $q \in B_- \setminus V(x'_1, \ldots, x'_k) = (X \setminus V(\cJ)) \times \mathbb{G}_m$, then $\sigma(q) \in X \setminus V(\cJ)$ implies:
  \[
  \Inv_q(\sigma^s(\cI)) < \Phi.
  \]
\end{itemize}
\end{proof}

This principle applies, for instance, to the order function in any characteristic and to the invariant $\inv_p(\cI)$ in characteristic zero.

\begin{example}
Let $k$ be a field of characteristic $p > 2$, and consider the hypersurface $Y \subset X = \Spec k[x,y,z]$ defined by:
\[
f = x^p + y^p z + z^k + x^{p-1} y^2 + x^{p+1} y z,
\]
where $p \nmid k(k-1)$ and $k \geq 2p+1$. Then:
\begin{itemize}
  \item The order of $f$ at the origin is $p$.
  \item Computing the ideal $\cD^{\leq 2}(f)$  generated by $f$ and all its derivatives of order $\leq 2$ , yields $\supp(\ord(f)\geq3) =  V(\cD^{\leq 2}(f))=V(x,y,z)=V(\cJ)$.
\end{itemize}

Solving the weight system:
\[
p w_1 = p w_2 + w_3 = k w_3,
\]
one obtains $\cJ = (x^{1/w_1}, y^{1/w_3}, z^{1/w_3})$ such that $\inn(f) = x^p + y^p z + z^k$ and :
\[
 \supp(\ord(\inn(f)\geq 3) = \cD^{\leq 2}(\inn(f))=V(y^p+kz^k,k(k-1)z^k,x^p)=V(x,y,z)=V(\cJ).
\]
Thus, by Theorem \ref{Homoge3} applied for the order function $\Phi=\ord$ the cobordant blow-up at $\cJ$ reduces the order to $2$, and the singularity can be resolved via further cobordant blow-ups.

\end{example}


\subsection{Homogeneous Subschemes} \label{homog}

Let $V$ be a regular scheme and 
\[
X := \mathbb{A}^n_V = \Spec_V(\cO_V[x_1, \ldots, x_n])
\]
be the affine $n$-space over $V$. Let $\cJ = (x_1^{1/w_1}, \ldots, x_k^{1/w_k})$ be a weighted center on $X$. Then $\cJ$ induces a grading on $\cO_V[x_1,\ldots,x_n]$ via the isomorphism
\[
x_i \mapsto t^{w_i}x_i \text{ for } i \leq k, \qquad x_j \mapsto x_j \text{ for } j > k,
\]
yielding a ring map
\[
\phi: \cO_V[x_1, \ldots, x_n] \to \cO_V[t^{w_1}x_1, \ldots, t^{w_k}x_k, x_{k+1}, \ldots, x_n].
\]
This map sends a monomial $x^\alpha = x_1^{a_1} \cdots x_n^{a_n} \in \cJ^d$ (with $d = \sum_{i=1}^k a_i w_i$) to $t^d x^\alpha$.

Let $f \in \cJ^d \setminus \cJ^{d+1}$ be a homogeneous element of degree $d$. Then
\[
\phi(f) = t^d f \in \cO_V[t^{w_1}x_1, \ldots, t^{w_k}x_k, x_{k+1}, \ldots, x_n],
\]
which corresponds to the strict transform of $f$ in the cobordant algebra
\[
\cO_B = \cO_X[t^{-1}, t^{w_1}x_1, \ldots, t^{w_k}x_k].
\]

\begin{definition}
A closed subscheme of $X$ defined by a homogeneous ideal $\cI \subset \cO_X=\cO_V[x_1, \ldots, x_n]$ (with respect to the grading induced by $\cJ$) is called a \emph{homogeneous subscheme} with respect to $\cJ$.
\end{definition}

This setting gives rise to a useful structural result:
\begin{lemma} \label{homo} (Figures \ref{F3}, \ref{F4})
Let $\cI \subset \cO_V[x_1, \ldots, x_n]$ be a homogeneous ideal with respect to
\[
\cJ = \left(x_1^{1/w_1}, \ldots, x_k^{1/w_k}\right).
\]
Let $\sigma: B \to X$ be the cobordant blow-up of $\cJ$, and let $\pi: X \times \mathbb{A}^1 \to X$ be the natural projection. Then:
\[
B
= \Spec_X\left(\cO_X[t^{-1}, t^{w_1} x_1, \ldots, t^{w_k} x_k]\right)
\]
\[
= \Spec_X\left(\cO_V[t^{-1}, t^{w_1} x_1, \ldots, t^{w_k} x_k, x_{k+1}, \ldots, x_n]\right)
\]
\[
\simeq X \times \mathbb{A}^1
= \Spec_V\left(\cO_V[x_1, \ldots, x_n, y]\right),
\]
via the map defined by:
\[
x_i \mapsto t^{w_i} x_i \quad (i \le k), \quad
x_j \mapsto x_j \quad (j > k), \quad
y \mapsto t^{-1}.
\]

Moreover:
\begin{itemize}
  \item $\cO_Y \cdot \cI$ is sent to the strict transform ideal $\sigma_B^s(\cI)$;
  \item $V(x_1, \ldots, x_n) \subset X \times \mathbb{A}^1$ is mapped to the vertex $V$ of $B$.
\end{itemize}
\qed
\end{lemma}
\begin{example} \label{111}
Let $f = \sum_{i=1}^n \alpha_i x_i^{c_i}$ define a Brieskorn hypersurface in $X = \AA^n_K$. 

Under a cobordant blow-up at \[
\cJ = (x_1^{1/w_1}, \ldots, x_n^{1/w_n}) \quad \text{with } w_ic_i = w_jc_j \text{ for all } i, j
,\] the function transforms to
\[
\sigma^c(f) = \sigma^s(f) = \alpha_1 (x_1')^{c_1} + \cdots + \alpha_k (x_k')^{c_k},
\]
preserving its weighted homogeneity in the new coordinates, with the center replaced by the vertex. In positive characteristic, if the singular locus of $V(f)$ is contained in \( V(x_1, \ldots, x_k) \), then by Theorem \ref{Homoge1} the single cobordant blow-up (after removing the vertex) suffices to resolve the singularity, just as in characteristic zero. However, over non-perfect fields, the general case remains a challenging and unresolved problem. \cite{CPS}
\end{example}
\subsection{The Narasimhan Example Revisited} 
\begin{example}[Narasimhan] \label{NS}
Let \( f = x^2 + y z^3 + z w^3 + y^7 w \in k[x,y,z,w] \), where \( \operatorname{char}(k) = 2 \). The singular locus is:
\[
\Sing(f) = V(f, Y, Z, W), \quad \text{where} \quad
\begin{cases}
Y = D_y(f) = z^3 + y^6 w, \\
Z = D_z(f) = y z^2 + w^3, \\
W = D_w(f) = z w^2 + y^7.
\end{cases}
\]
This is a 1-dimensional toric subvariety of \( \AA^4 \), given by the parametrization \( t \mapsto (t^{32}, t^7, t^{19}, t^{15}) \).

The subscheme \( V(f) \) is homogeneous with respect to the weighted center
\[
\cJ = (x^{1/32}, y^{1/7}, z^{1/19}, w^{1/15}),
\]
so the cobordant blow-up of \( \cJ \) transforms \( f \) without changing its equation. On the chart where \( z \neq 0 \), we observe that
\[
f = x^2 + \frac{1}{z^2} YZ + \frac{y^6 w^4}{z^2},
\]
and after the coordinate change \( X := x + \frac{y^3 w^2}{z} \), we get
\[
f = X^2 + \frac{1}{z^2} Y Z.
\]
Then \( (X, Y, Z) \) is a system of regular parameters which describes the singular locus on the chart where \( z \neq 0 \), and defines a smooth center for a final cobordant blow-up. Similarly on the charts  $w\neq 0$, and $y\neq 0$.

Thus, the singularity resolves in two cobordant blow-ups, while the approaches via characteristic-zero-style invariants $\inv_p(f)$ \ requires at least three steps.

\end{example}

\begin{remark}
This example illustrates that in positive characteristic, the equimultiple locus  may not be contained in any smooth hypersurface-it has embedding dimension 4, but codimension only 3. Hence, \emph{maximal contact fails to exist}, and characteristic-zero techniques based on hypersurface reduction do not apply. Cobordant blow-ups provide a viable alternative.
\end{remark}
\subsection{Cobordant Blow-Ups vs. Classical Blow-Ups at Smooth Centers}
In the cobordant resolution of the Narasinhman example, the second blow-up was at the smooth center \( (X, Y, Z) \) with all weights equal to 1. More generally:

\begin{lemma}
Let \( X \) be a regular scheme. Suppose \( B_+ \to X \) is a cobordant blow-up at a center \( \cJ = (u_1, \ldots, u_k) \) with all weights equal to 1. Let \( \Bl_{\cJ}(X) \to X \) be the classical blow-up at this smooth center. Then the induced quotient morphism
\[
B_+ \to B_+ / \GG_m \simeq \Bl_{\cJ}(X)
\]
is a locally trivial \( \GG_m \)-bundle.
\end{lemma}

\begin{proof}
The statement is local on \( X \), so take \( B = \Spec_X(\cO_X[t^{-1}, u_1 t, \ldots, u_k t]) \). The open cover of \( B_+ \) consists of charts \( B_{u_i t} = \Spec_X(\cO_X[t^{-1}, u_j t, (u_i t)^{-1}]) \). Then the quotient
\[
B_{u_i t} / \GG_m = \Spec_X\left(\cO_X\left[\frac{u_1}{u_i}, \ldots, \frac{u_k}{u_i}\right]\right)
\]
is an affine chart of the classical blow-up \( \Bl_{\cJ}(X) \). Moreover, since \( t^{-1} = (u_i t)^{-1} u_i \), we can express
\[
\cO_{B_{u_i t}} = \cO_X\left[\frac{u_1}{u_i}, \ldots, \frac{u_k}{u_i}\right][u_i t, (u_i t)^{-1}],
\]
so \( B_{u_i t} \simeq \Bl_{\cJ}(X) \times \GG_m \). Hence the morphism is locally a product with \( \GG_m \).
\end{proof}
\footnote{This presentation of \( B_+ / \GG_m \simeq \Bl_{\cJ}(X) \) also appears in \cite[Definition 5.1.5]{HS06}.}
\section{Appendix}
\subsection{Generalized Cobordant Blow-Ups}\label{general}



\subsubsection{ Cobordant Blow-Ups of Rees Algebras}

To treat nonembedded situations, we extend the definition of cobordant blow-ups from smooth schemes to arbitrary Noetherian schemes using Rees algebras instead of $\QQ$-ideals. This avoids requiring normality or integrality.

\begin{definition}
Let $\mathcal{R} = \bigoplus_{a \geq 0} \mathcal{R}_a t^a \subset \mathcal{O}_X[t]$ be a Rees algebra on a Noetherian scheme $X$. The \emph{full cobordant blow-up} of $\mathcal{R}$ is
\[
B := \Spec_X(\mathcal{R}[t^{-1}]) \to X.
\]
We define the \emph{cobordant blow-up} as the open subset $B_+ := B \setminus V(\mathcal{R}_1)$.
\end{definition}

\begin{definition}
A \emph{Rees center} (or generalized center) is any extended Rees algebra $\mathcal{A}^{\ext}$ on $X$, locally generated in the form
\[
\mathcal{A}^{\ext} = \mathcal{O}_X[t^{-w}, f_1 t^{1/a_1}, \ldots, f_k t^{1/a_k}],
\]
where $V(f_1, \ldots, f_k) \subset X$ is a regular subscheme.
\end{definition}

\begin{lemma}
Let $X \to Y$ be a morphism of Noetherian schemes, and let $\mathcal{R}_Y$ be a Rees algebra on $Y$. Then the pullback $\mathcal{R}_X := \mathcal{O}_X \cdot \mathcal{R}_Y$ defines a Rees algebra on $X$, and the corresponding cobordant blow-ups satisfy a natural morphism:
\[
B_X \to B_Y,
\]
compatible with the maps to $X$ and $Y$. If $X \to Y$ is a closed immersion, then so is $B_X \to B_Y$. \qed
\end{lemma}

\subsubsection{Restriction of Centers to Subschemes}

\begin{lemma}
Let $X$ be a smooth variety over a field of characteristic zero, and let $Y \subset X$ be a reduced subscheme not locally contained in an SNC divisor $E \subset X$. Suppose $\mathcal{A}^{\ext} = \mathcal{O}_X[t^{-w}, u_1 t^{1/a_1}, \ldots, u_k t^{1/a_k}]$ is a maximal $\mathcal{I}_Y$-admissible center on $X$. Then the restriction
\[
\mathcal{A}^{\ext}_{|Y} := \mathcal{O}_Y[t^{-w}, f_1 t^{1/a_1}, \ldots, f_k t^{1/a_k}], \quad f_i := u_i|_Y,
\]
is a Rees center on $Y$ whose support $V(f_1, \ldots, f_k)$ is regular.
\end{lemma}

\begin{proof}
Since $\mathcal{I}_Y t \subset \mathcal{A}^{\ext}$, we have $Y \supseteq V(u_1, \ldots, u_k)$. Thus the vanishing locus of the restrictions,
\[
V(f_1, \ldots, f_k) = V(u_1, \ldots, u_k) \cap Y,
\]
is regular, and the algebra $\mathcal{O}_Y \cdot \mathcal{A}^{\ext}$ has the required form.
\end{proof}
\subsection{Cobordant Blow-Ups in the Logarithmic Category}

Cobordant blow-ups extend naturally to the logarithmic category. Let \( X \) be a logarithmically regular scheme (e.g., a strict toroidal variety), and let the center have the form:
\[
\cJ = (u_1^{1/w_1}, \ldots, u_k^{1/w_k}, m_1^{1/w_{k+1}}, \ldots, m_r^{1/w_{k+r}}),
\]
where \( u_i \) are regular parameters and \( m_i \) are monomials from the logarithmic structure. Then the full cobordant blow-up yields:
\[
B = \Spec\left( \cO_X[t^{-1}, t^{w_1}u_1, \ldots, t^{w_k}u_k, t^{w_{k+1}}m_1, \ldots, t^{w_{k+r}}m_r] \right)^{\text{int}},
\]
which remains logarithmically regular. The construction is functorial and compatible with charts, hence applies in general to logarithmic schemes.

Such centers were originally studied in \cite{ATW-principalization, ATW-relative, ATW-weighted,Quek} in the context of Kummer blow-ups and stack-theoretic resolutions. 


\subsection{ Resolution of Foliations via Cobordant Blow-ups}\label{foliations}

Cobordant blow-ups have recently gained attention in the resolution of singular foliations due to their distinctive behavior and advantages over classical techniques (See \cite{ABTW25}).

\begin{itemize}
    \item \textbf{Smooth blow-ups} are often insufficient to resolve singularities of foliations.
    \item \textbf{Weighted blow-ups} are conjectured to reduce foliations to simpler normal forms, but may still retain singularities.
    \item \textbf{Cobordant blow-ups} yield significantly simpler transformation formulas for foliations, often leading to more structured normal forms.
    \item In some cases, cobordant blow-ups produce \emph{nonsingular} foliations - a phenomenon that does \emph{not} occur under weighted blow-ups. (Figure \ref{F5})
\end{itemize}







\begin{figure}[ht]
\centering
\begin{tikzpicture}[scale=1.2, >=stealth]
 \node at (2,-4.9) {\textcolor{black}{ $x\partial_x$}};
\node at (4,-4.8) {\textcolor{black}{\Large $X$}};
\node at (3,-3.3) {\textcolor{black}{\Large $B$}};
\node at (-0.8,-2.3) {\textcolor{blue}{Vertex \large $V$}};
\node at (-0.2,-4.9) {\textcolor{magenta}{\large  ${\mathcal J}$-center}};
 
  \draw[->] (-0.5,0) -- (4.5,0) node[right] {$x'$};
  \draw[->] (0,-4) -- (0,0.5) node[left] {$t^{-1}$};

  \draw[very thick, blue] (0,-4) -- (0,0);

  \foreach \n/\m in {2/2, 2/1, 2/0, 2/-1,
                     1/2, 1/1, 1/0, 1/-1,
                     0/2, 0/1, 0/0, 0/-1,
                    -1/2, -1/1, -1/0, -1/-1} {
    \pgfmathsetmacro\xstart{2^(\n-1)}
    \pgfmathsetmacro\xend{2^\n}
    \pgfmathsetmacro\yval{-2^\m}
    \fill[red] (0,\yval) circle (1.2pt);

    \draw[->, red, thick] (\xstart,\yval) -- (\xend,\yval);
  }
  
  \foreach \n in {-1, 0, 1, 2} {
    \pgfmathsetmacro\xstart{2^(\n-1)}
    \pgfmathsetmacro\xend{2^\n}
    \draw[->, red, thick] (\xstart,0) -- (\xend,0);
  }

  \foreach \n in {-1, 0, 1, 2} {
    \pgfmathsetmacro\xstart{2^(\n-1)}
    \pgfmathsetmacro\xend{2^\n}
    \draw[->, red, thick] (\xstart,-4.5) -- (\xend,-4.5);
     \fill[magenta] (0,-4.5) circle (2pt);
  }

      \foreach \k in {2} {
    \pgfmathsetmacro\a{-2^\k}
    \draw[domain=1:4, smooth, variable=\x, black]
      plot ({\x}, {\a/\x});

  }

      \foreach \k in {1} {
    \pgfmathsetmacro\a{-2^\k}
    \draw[domain=0.5:4, smooth, variable=\x, black]
      plot ({\x}, {\a/\x});

  }

      \foreach \k in {0,-1,-2} {
    \pgfmathsetmacro\a{-2^\k}
    \draw[domain=0.25:4, smooth, variable=\x, black]
      plot ({\x}, {\a/\x});

  }



\end{tikzpicture}
\caption{Cobordant resolution of the vector field $x\partial_x$ on $X$ via a single cobordant  blow-up. The foliation becomes nonsingular $x'\partial_{x'}$ on $B_+ = B \setminus V$.}
\label{F5}
\end{figure}

\providecommand{\bysame}{\leavevmode\hbox to3em{\hrulefill}\thinspace}
\providecommand{\MR}{\relax\ifhmode\unskip\space\fi MR }
\providecommand{\MRhref}[2]{%
  \href{http://www.ams.org/mathscinet-getitem?mr=#1}{#2}
}
\providecommand{\href}[2]{#2}


\end{document}